\numberwithin{figure}{section}
\numberwithin{table}{section}
\numberwithin{equation}{section}
\newenvironment{abstr}[1]{ \vspace{.05in}\footnotesize
       \parindent .2in
         {\upshape\bfseries #1. }\ignorespaces}{\par\vspace{.1in}}
\newenvironment{Abstract}{\begin{abstr}{Abstract}}{\end{abstr}}
\newenvironment{keywords}{\begin{abstr}{Key words}}{\end{abstr}}
\newenvironment{AMS}{\begin{abstr}{AMS subject classifications}}{\end{abstr}}
\newtheorem{theorem}{Theorem}[section]
\newtheorem{lemma}[theorem]{Lemma}
\newtheorem{corollary}[theorem]{Corollary}
\newtheorem{proposition}[theorem]{Proposition}
\newtheorem{assumption}[theorem]{Assumption}
\theoremstyle{definition}
\newtheorem{definition}[theorem]{Definition}
\newtheorem{remark}[theorem]{Remark}
\DeclareMathOperator{\diam}{diam}
\DeclareMathOperator{\Div}{div}
\DeclareMathOperator{\curl}{curl}
\DeclareMathOperator{\eff}{eff}
\DeclareMathOperator{\Id}{Id}
\DeclareMathOperator{\stab}{stab}
\DeclareMathOperator{\appr}{appr}
\DeclareMathOperator{\imp}{imp}
\renewcommand{\Re}{\operatorname{Re}}
\renewcommand{\Im}{\operatorname{Im}}
\DeclareMathOperator*{\wto}{\rightharpoonup}
\newcommand{\twosc}{\stackrel{2}{\wto}}
\newcommand{\nz}{\mathbb{N}}       
\newcommand{\gz}{\mathbb{Z}}       
\newcommand{\rz}{\mathbb{R}}       
\newcommand{\cz}{\mathbb{C}}       
\newcommand{\pz}{\mathbb{P}}
\newcommand\Va{\mathbf{a}}
\newcommand\Vb{\mathbf{b}}
\newcommand\Vc{\mathbf{c}}
\newcommand\Ve{\mathbf{e}}
\newcommand\Vf{\mathbf{f}}
\newcommand\Vg{\mathbf{g}}
\newcommand\Vn{\mathbf{n}}
\newcommand\Vv{\mathbf{v}}
\newcommand\Vu{\mathbf{u}}
\newcommand\Vw{\mathbf{w}}
\newcommand\Vx{\mathbf{x}}
\newcommand\Vz{\mathbf{z}}
\newcommand\VH{\mathbf{H}}
\newcommand\VV{\mathbf{V}}
\newcommand\Vpsi{\boldsymbol{\psi}}
\newcommand\Vphi{\boldsymbol{\phi}}
\newcommand\Vtheta{\boldsymbol{\theta}}
\newcommand\Vxi{\boldsymbol{\xi}}
\newcommand\CA{\mathcal{A}}
\newcommand\CB{\mathcal{B}}
\newcommand\CH{\mathcal{H}}
\newcommand\CS{\mathcal{S}}
\newcommand\CT{\mathcal{T}}
\begin{document}

\title{Heterogeneous Multiscale Method for the Maxwell equations with high contrast%
\thanks{This work was supported by the Deutsche Forschungsgemeinschaft (DFG) in the project ``OH 98/6-1: Wellenausbreitung in periodischen Strukturen und Mechanismen negativer Brechung''}
}
\author{Barbara Verf\"urth\footnotemark[2]}
\date{}
\maketitle

\renewcommand{\thefootnote}{\fnsymbol{footnote}}
\footnotetext[2]{Angewandte Mathematik: Institut f\"ur Analysis und Numerik, Westf\"alische Wilhelms-Uni\-ver\-si\-t\"at M\"unster, Einsteinstr. 62, D-48149 M\"unster}
\renewcommand{\thefootnote}{\arabic{footnote}}

\begin{Abstract}
In this paper, we suggest a new Heterogeneous Multiscale Method (HMM) for the (time-harmonic) Maxwell scattering problem with high contrast. 
The method is constructed for a setting as in Bouchitt{\'e}, Bourel and Felbacq ({\itshape C.R.\ Math.\ Acad.\ Sci.\ Paris} 347(9-10):571--576, 2009), where the high contrast in the parameter leads to unusual effective parameters in the homogenized equation. 
We present a new homogenization result for this special setting, compare it to existing homogenization approaches and analyze the stability of the two-scale solution with respect to the wavenumber and the data. 
This includes a new stability result for solutions to time-harmonic Maxwell's equations with matrix-valued, spatially dependent coefficients.
The HMM is defined as direct discretization of the two-scale limit equation.
 With this approach we are able to show quasi-optimality and a priori error estimates in energy and dual norms under a resolution condition that inherits its dependence on the wavenumber from the stability constant for the analytical problem.
 This is the first wavenumber-explicit resolution condition for time-harmonic Maxwell's equations.
Numerical experiments confirm our theoretical convergence results.
\end{Abstract}

\begin{keywords}
multiscale method, finite elements, homogenization, two-scale equation, Maxwell equations
\end{keywords}

\begin{AMS}
65N30, 65N15, 65N12, 35Q61, 78M40, 35B27
\end{AMS}

\section{Introduction}
\label{sec:introduction}

The interest in (locally) periodic media, such as photonic crystals, has grown in the last years as they exhibit astonishing properties such as band gaps or negative refraction, see \cite{EP04negphC, PE03lefthanded, CJJP02negrefraction}. 
In this paper, we extend the study of {\itshape artificial magnetism} from the two-dimensional case in \cite{OV16hmmhelmholtz} to the full three-dimensional case.
Artificial magnetism describes the occurrence of an (effective) permeability $\mu\neq 1$ in an originally non-magnetic material, i.e.\ $\mu=1$.
The study of the two-dimensional reduction, the Helmholtz equation, in \cite{BF04homhelmholtz} has shown that such a material must exhibit a high contrast structure (see below) to allow this significant change of behavior.
The homogenization analysis has been extended to the full three-dimensional Maxwell equations in \cite{BBF09hommaxwell, BBF15hommaxwell} to obtain a wavenumber-dependent effective permeability, which can even have a negative real part.
The frequencies where the real part of the permeability is negative are of particular interest as they form the band gap: Wave propagation is forbidden in these cases.
Although producing the same qualitative results, we emphasize that there are significant differences from the two- to the three-dimensional case, for instance that the effective permeability also depends on the solution outside the inclusions (see below).
The setting of \cite{BBF09hommaxwell} can be complemented with long and thin wires as in \cite{LS15negindex} to obtain a negative effective $\varepsilon$ as well and thus a negative refractive index. 

The setting of \cite{BBF15hommaxwell} is the following (see Figure \ref{fig:setting}):
A periodic structure of three-dimensional bulk inclusions with high permittivity (depicted in gray in Figure \ref{fig:setting}) is embedded in a lossless dielectric material.
Denoting by the small parameter $\delta$ the periodicity, the high permittivity in the inclusions is modeled by setting $\varepsilon^{-1}=\delta^2\varepsilon_1^{-1}$, see \eqref{eq:hetparam} for an exact definition.
The consideration of small inclusions with high permittivity has become a popular modeling to tune unusual effective material properties, see \cite{BBF09hommaxwell, BS10splitring, CC15hommaxwell, LS15negindex}.

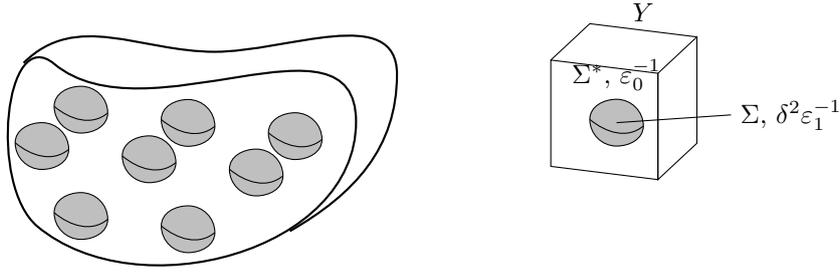
\begin{figure}
\centering
\begin{tikzpicture}[tdplot_main_coords, scale=1.5]
\draw[thick] (0,0,0) to [out=140, in=140] (4,1.3,0)
to [out=-40, in=270] (0.5,3,0)
to [out=90, in=320] (0,0,0);
\draw[thick] (0.07,-0.26,0) to [out=45, in=180] (0, 1.5, 0.3)
to [out=0, in=90] (0, 3.2, 0.3)
to [out=270, in=390] (0, 2.2, -1.3);
\draw[fill=lightgray] (0,0, -0.4) to [out=90, in=180] (0, 0.25, -0.18)
to [out=0, in=90] (0, 0.5, -0.4)
to [out=270, in=360] (0, 0.25, -0.62)
to [out=180, in=270] (0, 0, -0.4);
\draw (0, 0, -0.4) to [out=-40, in =220] (0, 0.5, -0.4);
\draw[fill=lightgray] (1.0,0, -0.4) to [out=90, in=180] (1.0, 0.25, -0.18)
to [out=0, in=90] (1.0, 0.5, -0.4)
to [out=270, in=360] (1.0, 0.25, -0.62)
to [out=180, in=270] (1.0, 0, -0.4);
\draw (1.0, 0, -0.4) to [out=-40, in =220] (1.0, 0.5, -0.4);
\draw[fill=lightgray] (0,1.0, -0.4) to [out=90, in=180] (0, 1.25, -0.18)
to [out=0, in=90] (0, 1.5, -0.4)
to [out=270, in=360] (0, 1.25, -0.62)
to [out=180, in=270] (0, 1.0, -0.4);
\draw (0, 1.0, -0.4) to [out=-40, in =220] (0, 1.5, -0.4);
\draw[fill=lightgray] (1.0,1.0, -0.4) to [out=90, in=180] (1.0, 1.25, -0.18)
to [out=0, in=90] (1.0, 1.5, -0.4)
to [out=270, in=360] (1.0, 1.25, -0.62)
to [out=180, in=270] (1.0, 1.0, -0.4);
\draw (1.0, 1.0, -0.4) to [out=-40, in =220] (1.0, 1.5, -0.4);
\draw[fill=lightgray] (0,0, -1.4) to [out=90, in=180] (0, 0.25, -1.18)
to [out=0, in=90] (0, 0.5, -1.4)
to [out=270, in=360] (0, 0.25, -1.62)
to [out=180, in=270] (0, 0, -1.4);
\draw (0, 0, -1.4) to [out=-40, in =220] (0, 0.5, -1.4);
\draw[fill=lightgray] (0,1.0, -1.4) to [out=90, in=180] (0, 1.25, -1.18)
to [out=0, in=90] (0, 1.5, -1.4)
to [out=270, in=360] (0, 1.25, -1.62)
to [out=180, in=270] (0, 1.0, -1.4);
\draw (0, 1.0, -1.4) to [out=-40, in =220] (0, 1.5, -1.4);
\draw[fill=lightgray] (0,2.0, -0.4) to [out=90, in=180] (0, 2.25, -0.18)
to [out=0, in=90] (0, 2.5, -0.4)
to [out=270, in=360] (0, 2.25, -0.62)
to [out=180, in=270] (0, 2.0, -0.4);
\draw (0, 2.0, -0.4) to [out=-40, in =220] (0, 2.5, -0.4);
\draw[fill=lightgray] (1.0,2.0, -0.4) to [out=90, in=180] (1.0, 2.25, -0.18)
to [out=0, in=90] (1.0, 2.5, -0.4)
to [out=270, in=360] (1.0, 2.25, -0.62)
to [out=180, in=270] (1.0, 2.0, -0.4);
\draw (1.0, 2.0, -0.4) to [out=-40, in =220] (1.0, 2.5, -0.4);
%
\draw (1,5,0) --(1,6,0)-- (1,6,1) --(1,5,1)--cycle;
\draw (1,6,0) --(0,6,0)-- (0,6,1)--(1,6,1)--cycle;
\draw (1,5,1) -- (0,5,1)--(0,6,1);
\node [above] at (0, 5.5, 1) {$Y$};
\draw[fill=lightgray] (0,5.0, 0.1) to [out=90, in=180] (0, 5.25, 0.32)
to [out=0, in=90] (0, 5.5, 0.1)
to [out=270, in=360] (0, 5.25, -0.12)
to [out=180, in=270] (0, 5.0, 0.1);
\draw (0, 5.0, 0.1) to [out=-40, in =220] (0, 5.5, 0.1);
\node [above] at (0, 5.25, 0.32) {$\Sigma^*$, $\varepsilon^{-1}_0$};
\draw (0, 5.25, 0.1) -- (0.5, 6.5, 0.5);
\node [right] at (0.5, 6.5, 0.5) {$\Sigma$, $\delta^2\varepsilon^{-1}_1$};
\end{tikzpicture}
\caption{Left: Scatterer $\Omega$ with the high contrast inclusions $\Sigma_\delta$ (in gray); Right: Zoom into unit cell $Y$ and scaling of $\varepsilon^{-1}_\delta$}
\label{fig:setting}
\end{figure}

The overall setting in this paper is as follows (cf.\ \cite{BBF09hommaxwell, BBF15hommaxwell}): We consider a scatterer  $\Omega\subset\rz^3$ bounded and smooth (with $C^2$ boundary). The structure is non-magnetic, i.e.\ $\mu=1$, and has a (relative) permittivity $\varepsilon$, which equals $1$ outside $\Omega$.
The magnetic field $\VH$ now solves the following curl-curl-problem
\begin{equation}
\label{eq:scatteringstrong}
\curl \varepsilon^{-1}\curl \VH\ = k^2 \VH,
\end{equation}
where $k=\omega/c$ is the (fixed) wavenumber.
Originally, this problem is studied on the whole space $\rz^3$, complemented with Silver-M\"uller radiation conditions at infinity, see e.g.\ \cite{BBF15hommaxwell}.
Here, we artificially truncate the computational domain, by introducing a large and smooth domain $G\supset\Omega$ and imposing the following impedance boundary condition
\begin{equation}
\label{eq:impedancebdry}
\curl \VH\times \Vn-ik(\Vn\times \VH)\times \Vn=\Vg \text{ on }\partial G
\end{equation}
with a tangential vector field $\Vg$ coming from the incident wave.
The permittivity $\varepsilon^{-1}=\varepsilon_\delta^{-1}$ inside the scatterer models the described setting of periodic inclusions with high permittivity and is defined in \eqref{eq:hetparam}.
Throughout this article, we assume that there is $k_0>0$ such that $k\geq k_0$, which corresponds to medium and high frequencies.

A numerical treatment of \eqref{eq:scatteringstrong} with boundary condition  \eqref{eq:impedancebdry} and permittivity with high contrast is very challenging. 
The main challenge is to well approximate the heterogeneities in the material and the oscillations induced by the incoming wave.
It is important to relate the scales of these oscillations: We basically have a three-scale structure here with $\delta\ll \lambda\sim k^{-1}<1$, i.e.\ the periodicity of the material (and the size of the inclusions) is much smaller than the wavelength of the incoming wave.
A direct discretization requires a grid with mesh size $h<\delta\ll 1$ to approximate the solution faithfully. This can easily exceed today's computational resources when using a standard approach. In order to make a numerical simulation feasible, so called multiscale methods can be applied.
 The family of Heterogeneous Multiscale Methods (HMM) \cite{EE03hmm, EE05hmm} is a class of multiscale methods that has been proved to be very efficient for scale-separated locally periodic problems. The HMM can exploit local periodicity in the coefficients to solve local sample problems that allow to extract effective macroscopic features and to approximate solutions with a complexity independent of the (small) periodicity $\delta$. 
First analytical results concerning the approximation properties of the HMM for elliptic problems have been derived in \cite{Abd05hmmanalysis, EMZ05hmmanalysis, Ohl05HMM} and then extended to other problems, such as time-harmonic Maxwell's equations \cite{HOV15maxwellHMM} and the Helmholtz equation with high contrast \cite{OV16hmmhelmholtz}.
Another related work is the multiscale asymptotic expansion for Maxwell's equations \cite{CZAL10multiscalemaxwell}.

The new contribution of this article is the first formulation of a Heterogeneous Multiscale Method for the Maxwell scattering problen with high contrast in the setting of \cite{BBF15hommaxwell}, its comprehensive numerical analysis and its implementation. 
The HMM can be used to approximate the true solution to \eqref{eq:scatteringstrong} with a much coarser mesh and hence less computational effort.
From the theoretical point of view, the main result is that the energy error converges with rate $k^{q+1}(H+h)+k^{q+1/2}H^{1/2}$ if the resolution condition $k^{q+2}(H+h)+k^{q+3/2}H^{1/2}=O(1)$ is fulfilled. 
Here, $H$ and $h$ denote the $\delta$-independent mesh sizes used for the HMM and we assume that the analytical two-scale solution has a stability constant of order $k^q$ with $q\in \nz_0$.
This is also -- to the author's best knowledge- -- first $k$-explicit resolution condition result for indefinite time-harmonic Maxwell's equations.
The existing literature \cite{GM12maxwellimpedance, Hipt02FEem,Hipt15maxwellcontdiscr, Monk} so far has only shown well-posedness and quasi-optimality for sufficiently fine meshes, without specifying the dependence of this threshold on $k$.
This stands in sharp contrast to the vast literature on the resolution condition for the Helmholtz equation, see e.g.\ \cite{MS11helmholtz, Sau06convanahelmholtz}.
A major issue for the analysis is the large kernel of the curl-operator implying that the $L^2$-identity term is no compact perturbation of the curl-term and that we cannot expect macroscopic functions to be good approximations in $L^2$, see \cite{GHV17lodmaxwell}.

To complement our numerical analysis, we also show an explicit stability estimate for the solution to the two-scale limit equation, so that we have an explicit (though maybe sub-optimal) result for the stability exponent, namely $q=3$.
This includes  a second contribution, which may be of own interest: a new stability result for a certain class of time-harmonic Maxwell's equations, namely with  matrix-valued spatially dependent coefficients. 
Stability results for Maxwell's equations with impedance boundary conditions have so far been only shown in the case of constant coefficients in \cite{FW14dgmaxwell, HMP11stabilityMaxwell, Moiola}.

The paper is organized as follows: In Section \ref{sec:problem}, we detail the (geometric) setting of the problem to be considered and introduce basic notation used throughout this article.
In Section \ref{sec:homogenization}, we give the homogenization results obtained for this problem in form of a two-scale and an effective macroscopic equation. 
These homogenized systems are analyzed with respect to stability and regularity in Section \ref{sec:analysis}.
In Section \ref{sec:numerics}, we introduce the Heterogeneous Multiscale Method and perform a rigorous a priori error analysis.
The main proofs are given in Section \ref{sec:proofs}.
A numerical experiment is presented in Section \ref{sec:experiment}.

\section{Problem setting}
\label{sec:problem}
For the remainder of this article, let $\Omega\subset\subset G\subset \rz^3$ be bounded, simply connected domains with $C^2$ boundary, $G$ with outer unit normal $\Vn$.
Vector-valued functions are indicated by boldface letters and unless otherwise stated, all functions are complex-valued.
Throughout this paper, we use standard notation:
For a domain $D$, $p\in[1,\infty)$ and $s\in \rz_{\geq 0}$, $L^p(D)$ denotes the usual complex Lebesgue space with norm $\|\cdot \|_{L^p(D)}$.
By $W^{s,p}(D)$ we denote  the space of functions on $D$ with (fractional) weak derivatives up to order $s$ belonging to $L^p(D)$ and we write $H^s(D):=W^{s,2}(D)$ for the scalar and $\mathbf{H}^s(D):=[H^s(D)]^3$ for the vector-valued case. 
The domain $D$ is omitted from the norms if no confusion can arise.
The dot will denote a normal (real) scalar product, for a complex scalar product we will explicitly conjugate the second component by using $v^*$ as the conjugate complex of $v$. 
Furthermore, we introduce the Hilbert spaces
\begin{equation*}
\begin{split}
\VH(\curl,D)&:=\{\Vu\in L^2(D; \cz^3)| \hspace{2pt} \curl \Vu\in L^2(D; \cz^3)\} \qquad \mbox{and}\\
\VH(\Div,D)&:=\{\Vu\in L^2(D; \cz^3)| \hspace{2pt} \Div \Vu\in L^2(D; \cz)\}
\end{split}
\end{equation*}
with their standard scalar products $(\cdot, \cdot)_{\VH(\curl, D)}$ and $(\cdot, \cdot)_{\VH(\Div, D)}$, respectively.
In order to define a suitable function space for the scattering problem, we introduce the following space of tangential $L^2$ functions on the boundary
\[L^2_T(\partial G):=\{\Vv\in [L^2(\partial G)]^3|\Vv\cdot \Vn=0\}.\]
We denote by $\Vu_T:=(\Vn\times \Vu)\times \Vn=\Vu-(\Vu\cdot \Vn)\Vn$ the tangential component of a vector function $\Vu$ on the boundary.
Now we define the space for the impedance boundary condition as
\[\VH_{\mathrm{imp}}(G):=\{\Vu\in \VH(\curl, G)|\Vu_T\in L^2_T(\partial G)\}\]
equipped with the graph norm, see \cite{Monk}.
We will frequently replace the standard norms of $\VH(\curl)$ and $\VH_{\mathrm{imp}}$ by the equivalent weighted norms
\begin{align*}
\|\Vv\|_{\curl;k;D}&:=(\|\curl\Vv\|_{L^2(D)}^2+k^2\|\Vv\|^2_{L^2(D)})^{1/2}\\
\text{and}\quad\|\Vv\|_{\mbox{\tiny{imp}};k;D}&:=(\|\curl\Vv\|_{L^2(D)}^2+k^2\|\Vv\|^2_{L^2(D)}+k\|\Vv_T\|^2_{L^2(\partial D)})^{1/2}.
\end{align*}
 To quantify higher regularity, we define for $s\in \nz_0$ the space
\begin{equation*}
\VH^s(\curl, D):=\{\Vu\in \VH(\curl, D)\, |\; \Vu\in \VH^s(D), \curl\Vu\in \VH^s(D)\}.
\end{equation*}
Observe that $\VH^0(\curl)=\VH(\curl)$.

Let $\Ve_j$ denote the $j$'th unit vector in $\rz^3$. 
For the rest of the paper we write $Y:=[-\frac 12, \frac 12)^3$ to denote the 3-dimensional unit cube and we say that a function $v\in L^2_{\mbox{\tiny loc}}(\mathbb{R}^3)$ is $Y$-periodic if it fulfills $v(y)=v(y+\Ve_j)$ for all $j=1,2,3$ and almost every $y\in \mathbb{R}^3$. 
With that we denote $L_{\sharp}^2(Y):=\{ v \in L^2_{\mbox{\tiny loc}}(\mathbb{R}^3)| \hspace{2pt} v \mbox{ is $Y$-periodic}\}$. Analogously we indicate periodic function spaces by the subscript $\sharp$. 
For example, $H^1_\sharp(Y)$ is the space of periodic $H^1_{\mbox{\tiny loc}}(\mathbb{R}^3)$-functions and we furthermore define for $s\in\mathbb{N}$
\[H^s_{\sharp,0}(Y):=\left\{ \left.\phi \in H^s_\sharp(Y)\right|\int_Y \phi(y) \, dy =0\right\}.\]
For $\Sigma^*\subset Y$, we denote by $H^1_{\sharp, 0}(\Sigma^*)$ and $\VH_\sharp(\curl, \Sigma^*)$ the restriction of functions in $H^1_{\sharp, 0}(Y)$ and $\VH_\sharp(\curl, Y)$ to $\Sigma^*$, respectively.
By $L^p(\Omega; X)$ we denote Bochner-Lebesgue spaces over the Banach space $X$ and we use the short notation $f(x,y):=f(x)(y)$ for $f\in L^p(\Omega; X)$.

\smallskip
Using the above notation we consider the following setting for the (inverse) relative permittivity $\varepsilon^{-1}$, see \cite{BBF15hommaxwell}:
$\Omega$ is composed of $\delta$-periodically disposed bulk inclusions, $\delta$ being a small parameter. Denoting by $\Sigma\subset\subset Y$ a connected domain with $C^2$ boundary, the inclusions occupy a region $\Sigma_\delta=\cup_{j\in I} \delta(j+\Sigma)$ with $I=\{j\in \gz^3|\delta(j+Y)\subset \Omega\}$. 
The complement of $\Sigma$ in $Y$, which has to be simply-connected, is denoted by $\Sigma^*$. 
The inverse relative permittivity $\varepsilon^{-1}_\delta=\varepsilon^{-1}$ is then defined (possibly after rescaling) as (cf.\ Figure \ref{fig:setting})
\begin{align}
\label{eq:hetparam}
\varepsilon_\delta^{-1}(x):= 
\begin{cases}
\delta^2\varepsilon_1^{-1}(\frac{x}{\delta}) & \text{if } x\in \Sigma_\delta\qquad\quad\;\; \text{ with }\varepsilon_1^{-1}\in L^\infty_\sharp(Y, \cz);\, \Im(\varepsilon_1)>0, \Re(\varepsilon_1)>0,\\
\varepsilon_0^{-1}(\frac{x}{\delta}) & \text{if }x\in \Omega\setminus \Sigma_\delta\qquad \text{ with }\varepsilon_0\in L^\infty_\sharp(Y, \rz) \text{ uniformly positive},\\
1 & \text{if }x \in G\setminus\overline{\Omega}.
\end{cases}
\end{align}
We assume $\Re(\varepsilon_1)>0$ for simplicity; all results hold -- up to minor modifications in the proofs -- also for $\varepsilon_1$ with $\Re(\varepsilon_1)\leq 0$. 
Physically speaking, this means that the scatterer $\Omega$ consists of periodically disposed metallic inclusions $\Sigma_\delta$ embedded in a dielectric ``matrix'' medium. 

\begin{definition}
Let $\varepsilon_\delta^{-1}$ be defined by \eqref{eq:hetparam} and let $\Vg\in L^2_T(\partial G)$.
The weak formulation of \eqref{eq:scatteringstrong} is: Find $\Vu_\delta\in \VH_{\imp}(G)$ such that
\begin{equation}
\label{eq:scatteringweak}
\int_G \varepsilon_\delta^{-1}\curl\Vu_\delta\cdot \curl \Vpsi^*-k^2\Vu_\delta\cdot \Vpsi^*\, dx-ik\int_{\partial G}(\Vu_\delta)_T\cdot \Vpsi_T^*\, d\sigma=\int_{\partial G}\Vg\cdot \Vpsi_T^*\, d\sigma \qquad \forall \Vpsi\in \VH_{\imp}(G).
\end{equation}
\end{definition}
The problem admits a unique solution for fixed $\delta$, which can be shown with the Fredholm theory, see e.g.\ \cite[Theorem 4.17]{Monk}.
Throughout the article, $C$ denotes a generic constant, which does not depend on $k$, $H$, or $h$, and we use the notation $a\lesssim b$ for $a\leq C b$ with such a generic constant.

\section{Homogenization}
\label{sec:homogenization}
As the parameter $\delta$ is very small in comparison to the wavelength and the typical length scale of $G$, one can reduce the complexity of problem \eqref{eq:scatteringweak} by considering the limit $\delta\to 0$.
This process, called homogenization, can be performed with the tool of two-scale convergence \cite{All92twosc}.
It has also been used in the papers \cite{BBF09hommaxwell, BBF15hommaxwell, CC15hommaxwell} studying closely related problems/formulations.
We proceed in a slightly different way and provide our homogenization results in Subsection \ref{subsec:twoscale}.
In Subsection \ref{subsec:literaturecomp}, we compare with the mentioned literature and show the equivalence of various formulations.

In addition to the notation from Section \ref{sec:problem}, we introduce the space 
\[\widetilde{\VH}_{\sharp}(\curl, \Sigma^*):= \VH_{\sharp}(\curl, \Sigma^*)/\ker (\curl_{y|_{\Sigma^*}}).\]
This is the space of functions $\Vv\in \VH_{\sharp}(\curl, \Sigma^*)$ such that $\curl_y\Vv$ is uniquely determined in $\Sigma^*$ or, in other words, such that $\Vv$ is determined up to a gradient (as $\Sigma^*$ is simply connected).
Note, however, that in practical applications, we will always be interested in $\curl_y \Vv$ only, which is in $L^2_\sharp(\Sigma^*)$ and uniquely determined.

\subsection{Two-scale and effective equations}
\label{subsec:twoscale}
Two-scale convergence is defined and characterized in \cite{All92twosc}, for instance. We write in short form $\twosc$.
The special scaling of $\varepsilon_\delta^{-1}$ leads to a different behavior of the solution inside $\Sigma_\delta$, which can be seen in the two-scale equation and the homogenized effective equation.

\begin{theorem}[Two-scale equation]
\label{thm:twoscaleeq}
Let $\Vu_\delta$ be the unique solution of \eqref{eq:scatteringweak}.
There are functions $\Vu\in \VH_{\imp}(G)$, $\Vu_1\in L^2(\Omega; \widetilde{\VH}_{\sharp}(\curl, \Sigma^*))$, $u_2\in L^2(\Omega; H^1_{\sharp, 0}(\Sigma^*))$, and $\Vu_3\in L^2(\Omega; \VH_0(\curl, \Sigma))$, such that the following two-scale convergences hold 
\begin{align*}
\Vu_\delta &\twosc \Vu+\chi_{\Sigma^*}\nabla_y u_2 +\chi_\Sigma\Vu_3,&&&
\chi_{\Omega\setminus \Sigma_\delta}\curl\Vu_\delta&\twosc \chi_{\Sigma^*}(\curl \Vu +\curl_y \Vu_1),\\
\delta\chi_{\Sigma_\delta} \curl \Vu_\delta&\twosc \chi_\Sigma \curl_y \Vu_3,&&&
\curl \Vu_\delta&\twosc \curl\Vu\quad \text{in}\quad G\setminus\overline{\Omega}.
\end{align*}
The quadruple $\underline{\Vu}:=(\Vu, \Vu_1, u_2, \Vu_3)\in \CH$ of two-scale limits is the unique solution to
\begin{equation}
\label{eq:twoscaleeq}
\CB((\Vu, \Vu_1, u_2, \Vu_3), (\Vpsi, \Vpsi_1, \psi_2, \Vpsi_3))=(\Vg, \Vpsi)_{\partial G} \qquad \forall (\Vpsi, \Vpsi_1, \psi_2, \Vpsi_3)\in \CH
\end{equation}
with $\CH:=\VH_{\imp}(G)\times L^2(\Omega; \widetilde{\VH}_\sharp(\curl, \Sigma^*))\times L^2(\Omega; H^1_{\sharp, 0}(\Sigma^*))\times L^2(\Omega; \VH_0(\curl, \Sigma))$ and
\begin{align*}
&\!\!\!\!\CB((\Vv, \Vv_1, v_2, \Vv_3), (\Vpsi, \Vpsi_1, \psi_2, \Vpsi_3))\\*
&:=\int_\Omega\int_{\Sigma^*}\varepsilon^{-1}_0(\curl \Vv+\curl_y\Vv_1)\cdot (\curl \Vpsi+\curl_y \Vpsi_1)^* +\int_\Omega\int_\Sigma \varepsilon^{-1}_1 \curl_y \Vv_3\cdot \curl_y \Vpsi_3^*\\*
&\quad-k^2\!\!\int_\Omega\int_Y (\Vv+\chi_{\Sigma^*}\nabla_y v_2+\chi_\Sigma \Vv_3)\cdot (\Vpsi+\chi_{\Sigma^*}\nabla_y \psi_2+\chi_{\Sigma}\Vpsi_3)^*\\*
&\quad +\int_{G\setminus \overline{\Omega}}\curl\Vv\cdot \curl\Vpsi^*-k^2\Vv\cdot \Vpsi^*-ik\int_{\partial G} \Vv_T \cdot \Vpsi_T^*.
\end{align*} 
\end{theorem}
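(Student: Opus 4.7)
The plan is to proceed via two-scale convergence in the spirit of Allaire and of Bouchitt\'e, Bourel and Felbacq, adapted to the indefinite impedance scattering setting. The argument splits naturally into four stages: deriving $\delta$-uniform a priori estimates, extracting two-scale limits and identifying the admissible microstructure, passing to the limit in \eqref{eq:scatteringweak} with suitably oscillating test functions, and finally establishing uniqueness.

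First I would obtain $\delta$-uniform bounds. Testing \eqref{eq:scatteringweak} with $\Vu_\delta$ and taking real and imaginary parts, using $\Im(\varepsilon_1)>0$ and the uniform positivity of $\varepsilon_0$, immediately controls $\|\delta\curl\Vu_\delta\|_{L^2(\Sigma_\delta)}$, $\|\curl\Vu_\delta\|_{L^2(G\setminus\Sigma_\delta)}$ and $k^{1/2}\|(\Vu_\delta)_T\|_{L^2(\partial G)}$ in terms of $\|\Vu_\delta\|_{L^2(G)}+\|\Vg\|_{L^2(\partial G)}$. The $\delta$-uniform $L^2(G)$-bound on $\Vu_\delta$ itself then follows by a Fredholm contradiction argument: if it failed, the normalized sequence would admit two-scale limits solving the homogeneous limit problem, which is uniquely solvable by the stability results of Section~\ref{sec:analysis}, a contradiction.

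With these bounds in hand I would extract subsequences converging two-scale to some $\Vu_0\in L^2(G\times Y;\cz^3)$, together with limits for $\chi_{\Omega\setminus\Sigma_\delta}\curl\Vu_\delta$, $\delta\chi_{\Sigma_\delta}\curl\Vu_\delta$, and $\curl\Vu_\delta|_{G\setminus\overline{\Omega}}$. Identifying the structure of these limits is the geometric heart of the proof. Outside $\Omega$, where the medium is $\delta$-independent, standard compactness gives $\Vu_0(x,y)=\Vu(x)$ with two-scale curl limit $\curl\Vu$. Inside the matrix part $\Sigma^*$, testing with $\delta$-oscillating gradients of $Y$-periodic scalars produces the Helmholtz-type decomposition $\Vu_0(x,y)=\Vu(x)+\nabla_y u_2(x,y)$ on $\Omega\times\Sigma^*$, and testing $\curl\Vu_\delta$ against $y$-divergence-free oscillating fields yields the two-scale curl limit $\curl\Vu(x)+\curl_y\Vu_1(x,y)$ for some $\Vu_1\in L^2(\Omega;\widetilde{\VH}_\sharp(\curl,\Sigma^*))$. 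Inside the inclusions, the scaling $\delta\curl\Vu_\delta\twosc\chi_\Sigma\CF$ together with compactness forces $\Vu_0|_\Sigma=:\Vu_3$ to satisfy $\curl_y\Vu_3=\CF$; the vanishing tangential trace condition $\Vu_3\in L^2(\Omega;\VH_0(\curl,\Sigma))$ is obtained by testing against functions compactly supported in individual cells $\delta(j+\Sigma)$ and exploiting the matching of tangential traces across $\partial\Sigma_\delta$. To derive \eqref{eq:twoscaleeq} I would insert into \eqref{eq:scatteringweak} the oscillating test function
\begin{equation*}
\Vpsi_\delta(x):=\Vpsi(x)+\chi_{\Sigma^*}(x/\delta)\nabla_y\psi_2(x,x/\delta)+\delta\,\chi_{\Sigma^*}(x/\delta)\Vpsi_1(x,x/\delta)+\chi_\Sigma(x/\delta)\Vpsi_3(x,x/\delta),
\end{equation*}
with the smooth components having the regularity prescribed by $\CH$ and $\Vpsi_3(x,\cdot)$ compactly supported in $\Sigma$. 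The crucial observation is that, on $\Sigma_\delta$, $|\curl\Vpsi_\delta|$ is of order $\delta^{-1}$, which is exactly compensated by the $\delta^2$ prefactor in $\varepsilon_\delta^{-1}$, producing the contribution $\int_\Omega\int_\Sigma\varepsilon_1^{-1}\curl_y\Vu_3\cdot\curl_y\Vpsi_3^*$ in the limit. All other terms pass to their two-scale limits via Allaire's lemma and the strong two-scale convergence of $\varepsilon_0^{-1}(\cdot/\delta)$ and $\varepsilon_1^{-1}(\cdot/\delta)$; a density argument on each factor of $\CH$ then yields \eqref{eq:twoscaleeq} for all test quadruples.

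The step I expect to be most delicate, and where the high-contrast scaling is genuinely used, is the correct identification of $\Vu_3$ and its function space: the zero tangential trace on $\partial\Sigma$ is not directly visible from the bound $\|\delta\curl\Vu_\delta\|_{L^2(\Sigma_\delta)}\lesssim 1$ and must be extracted by a careful interface analysis separating the bulk dynamics inside each cell from the matching with the matrix field $\Vu+\nabla_y u_2$. Once \eqref{eq:twoscaleeq} is established, uniqueness of $(\Vu,\Vu_1,u_2,\Vu_3)\in\CH$ follows from the stability/inf--sup estimate for $\CB$ proved in Section~\ref{sec:analysis}, which simultaneously promotes the extracted subsequence convergences to convergence of the full sequence.
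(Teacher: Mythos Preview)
Your proposal follows essentially the same four-step route as the paper's own proof in Section~\ref{subsec:homproof}: (i) assume an $L^2$ bound and deduce the weighted curl bounds, (ii) extract two-scale limits and identify their structure via an interface/trace-matching argument, (iii) pass to the limit with oscillating test functions, and (iv) close by a contradiction argument for the $L^2$ bound together with uniqueness of the limit problem.

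Two technical points deserve correction. First, your identification $\Vu_0|_\Sigma=:\Vu_3$ does not match the theorem's decomposition: the two-scale limit on $\Omega\times\Sigma$ is $\Vu+\Vu_3$, not $\Vu_3$ alone. The trace-matching argument (which the paper carries out by integrating $\delta\curl\Vu_\delta$ against smooth periodic test fields) shows that the tangential trace of the limit on $\Sigma$ equals that of $\Vu+\nabla_y u_2$ on $\partial\Sigma$, which is \emph{not} zero; only after subtracting the macroscopic part $\Vu$ (and absorbing the residual gradient piece) does one obtain a corrector in $\VH_0(\curl,\Sigma)$. So ``matching of tangential traces $\Rightarrow$ zero trace of $\Vu_3$'' is not the right implication as you have phrased it. Second, your oscillating test function $\Vpsi_\delta$ contains the raw characteristic functions $\chi_{\Sigma^*}(\cdot/\delta)$ and $\chi_\Sigma(\cdot/\delta)$, which introduces tangential jumps across $\partial\Sigma_\delta$ and hence $\Vpsi_\delta\notin\VH_{\imp}(G)$ in general. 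The paper circumvents this by taking smooth $Y$-periodic components with the support constraints $\Vpsi_3(\cdot,y)=0$ for $y\in\Sigma^*$ and $\nabla_y\psi_2(\cdot,y)=0$ for $y\in\Sigma$, so that no cut-off is needed. Both points are easily repaired and do not affect the overall strategy.
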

The proof is postponed to Section \ref{subsec:homproof}.

We now decouple the influence of the microscale and the macroscale by introducing so called effective parameters.
The macroscopic solution $\Vu$ solves an effective scattering problem, from which we can later on deduce the physically relevant behavior.
We emphasize that $\Vu$ is \emph{not} the weak limit of $\Vu_\delta$.

\begin{theorem}[Cell problems and effective macroscopic problem]
\label{thm:effectiveeq}
The quadruple $(\Vu, \Vu_1, u_2, \Vu_3)$ solves the two-scale equation \eqref{eq:twoscaleeq} if and only if $\Vu\in \VH_{\mathrm{imp}}(G)$ solves the effective macroscopic scattering problem
\begin{equation}
\label{eq:effectiveeq}
\int_G (\varepsilon^{-1})_{\hom} \curl \Vu\cdot \curl\Vpsi^*-k^2\mu_{\hom} \Vu\cdot \Vpsi^*\, dx =\int_{\partial G} \Vg \cdot \Vpsi_T^*\, d\sigma \qquad \forall \Vpsi\in \VH_{\mathrm{imp}}(G)
\end{equation}
and the correctors are 
\begin{equation}
\label{eq:correctors}
\begin{split}
\Vu_1&=\chi_{\Omega}\chi_{\Sigma^*}\sum_j (\curl \Vu)_j\Vw_j^1,\quad
u_2=\chi_\Omega\chi_{\Sigma^*}\sum_j k^2\Vu_j w_j^2 \quad
\text{and} \quad \Vu_3=\chi_\Omega\chi_\Sigma\sum_j k^2\Vu_j\Vw_j^3.
\end{split}
\end{equation}
Here, the homogenized (or effective) material parameters $(\varepsilon^{-1})_{\hom}$ and $\mu_{\hom}$are the identity in $G\setminus\overline{\Omega}$. 
In $\Omega$, they are defined via the solution of cell problems in the following way.

$(\varepsilon^{-1})_{\hom}$ is given as
\begin{align*}
\Bigl((\varepsilon^{-1})_{\hom}\Bigr)_{j,l}&:=\int_{\Sigma^*} \varepsilon^{-1}_0 (\Ve_l+\curl_y \Vw_l^1)\cdot \Ve_j\, dy,
\end{align*}
where $\Vw_l^1\in \widetilde{\VH}_\sharp(\curl, \Sigma^*)$, $l=1,2,3$, solves
\begin{align}
\label{eq:cellproblem1}
\int_{\Sigma^*}\varepsilon^{-1}_0(\Ve_l+\curl_y \Vw_l^1)\cdot \curl_y \Vpsi_1^*&= 0 \qquad \forall \Vpsi_1\in \widetilde{\VH}_{\sharp}(\curl, \Sigma^*).
\end{align}

$\mu_{\hom}$ is given as
\begin{align*}
\Bigl(\mu_{\hom}\Bigr)_{j,l}&:=\int_Y(\Ve_l+k^2\chi_{\Sigma^*}\nabla_y w_l^2+k^2\chi_\Sigma \Vw_l^3)\cdot \Ve_j\, dy,
\end{align*}
where $w_l^2\in H^1_{\sharp, 0}(\Sigma^*)$ and $\Vw_l^3\in \VH_0(\curl, \Sigma)$, $l=1,2,3$, solve
\begin{align}
\label{eq:cellproblem2}
\int_{\Sigma^*}(\Ve_l+k^2\nabla_y w_l^2)\cdot \nabla_y \psi_2^*\, dy &= 0\qquad \forall \psi_2\in H^1_{\sharp,0}(\Sigma^*),\\
\label{eq:cellproblem3}
\int_\Sigma \varepsilon^{-1}_1 \curl_y \Vw_l^3\cdot \curl_y \Vpsi_3^*-k^2\Vw_l^3\cdot \Vpsi_3^*\, dy&=\int_\Sigma \Ve_l\cdot \Vpsi_3^* \qquad\forall \Vpsi_3\in \VH_0(\curl, \Sigma).
\end{align}
\end{theorem}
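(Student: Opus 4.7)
The plan is to prove the two implications separately, using the standard trick of choosing special test functions in the two-scale equation \eqref{eq:twoscaleeq} to decouple the microscopic and macroscopic contributions, and then exploit linearity of the cell problems.

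For the ``$\Rightarrow$'' direction, I would first insert test functions of the form $(\mathbf{0}, \Vpsi_1, 0, \mathbf{0})$, $(\mathbf{0}, \mathbf{0}, \psi_2, \mathbf{0})$, $(\mathbf{0}, \mathbf{0}, 0, \Vpsi_3)$ into $\CB$. Because the corresponding cross-terms vanish (for instance $\int_Y \nabla_y v_2 \cdot \Vpsi_3^* = 0$ since $\psi_2, \Vpsi_3$ live on disjoint subdomains, and $\nabla_y v_2 \cdot \Vpsi^* = 0$ after the $y$-integration against a constant-in-$y$ macroscopic test function), each choice isolates one cell equation: namely \eqref{eq:cellproblem1} with right-hand side $\curl\Vu$ instead of $\Ve_l$, \eqref{eq:cellproblem2} with $\Vu$ instead of $\Ve_l$, and \eqref{eq:cellproblem3} with $k^2\Vu$ instead of $\Ve_l$. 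By linearity of these cell problems and uniqueness of their solutions in the quotient/mean-zero/$k$-dependent setting, one immediately reads off the representation \eqref{eq:correctors}. Next, I insert a purely macroscopic test function $(\Vpsi, \mathbf{0}, 0, \mathbf{0})$ and compute the $y$-integrals componentwise. For the curl-term this turns $\int_{\Sigma^*}\varepsilon_0^{-1}(\curl\Vu+\curl_y\Vu_1)\cdot \Ve_j\,dy$ into $\sum_l ((\varepsilon^{-1})_{\hom})_{j,l}(\curl\Vu)_l$, by the very definition of $(\varepsilon^{-1})_{\hom}$. For the $k^2$-term, substituting the formulas for $u_2$ and $\Vu_3$ and factoring out $\Vu_l$ reproduces exactly the entries of $\mu_{\hom}$. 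After integration over $\Omega$ and combining with the unchanged exterior and boundary contributions, we arrive at \eqref{eq:effectiveeq}.

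For the ``$\Leftarrow$'' direction I would simply run the argument backwards: given any $\Vu\in\VH_{\imp}(G)$ solving \eqref{eq:effectiveeq}, define $(\Vu_1,u_2,\Vu_3)$ through \eqref{eq:correctors}. By linearity these functions solve the cell problems with right-hand sides $\curl\Vu$, $\Vu$, $k^2\Vu$ respectively, so the three ``microscopic blocks'' of $\CB$ vanish against any $(\mathbf{0},\Vpsi_1,\psi_2,\Vpsi_3)$. Splitting an arbitrary test quadruple into its macroscopic part and its microscopic part then reduces the identity $\CB(\cdot,\cdot)=(\Vg,\Vpsi)_{\partial G}$ to the macroscopic identity, which is exactly \eqref{eq:effectiveeq} after using the same computation of the $y$-averages as above. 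The main obstacle I anticipate is the careful bookkeeping of the $k^2$-factors in the scaling of $u_2$ and $\Vu_3$: one has to recognise that the effective ``magnetic'' tensor $\mu_{\hom}$ arises precisely because the cell problem \eqref{eq:cellproblem3} is not coercive but $k$-dependent, and moreover one has to verify well-posedness of \eqref{eq:cellproblem3} (which, due to $\Im\varepsilon_1>0$, holds for all $k$) so that the representation \eqref{eq:correctors} is well-defined. The remaining subtlety is to handle the quotient space $\widetilde{\VH}_\sharp(\curl,\Sigma^*)$ consistently, but since $\CB$ only depends on $\Vu_1$ through $\curl_y\Vu_1$ this causes no genuine trouble.
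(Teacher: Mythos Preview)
Your approach is correct and is precisely the standard decoupling argument that the paper leaves implicit (the paper states the theorem without proof, noting only that the cell problems are well-posed by Lax--Milgram). Testing \eqref{eq:twoscaleeq} separately with $(\mathbf{0},\Vpsi_1,0,\mathbf{0})$, $(\mathbf{0},\mathbf{0},\psi_2,\mathbf{0})$, $(\mathbf{0},\mathbf{0},0,\Vpsi_3)$ and then with $(\Vpsi,\mathbf{0},0,\mathbf{0})$, combined with linearity and uniqueness of the cell solutions, is exactly how one obtains \eqref{eq:correctors} and \eqref{eq:effectiveeq}.

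One small correction to your parenthetical: the claim that $\int_{\Sigma^*}\nabla_y v_2\cdot\Vpsi^*\,dy=0$ for a $y$-constant macroscopic $\Vpsi$ is \emph{false} in general. Since $v_2\in H^1_{\sharp,0}(\Sigma^*)$, integration by parts gives $\int_{\Sigma^*}\nabla_y v_2\,dy=-\int_{\partial\Sigma}v_2\,\Vn\,d\sigma$, and there is no reason for this boundary integral to vanish. Fortunately you never actually need this: when you test with $(\mathbf{0},\mathbf{0},\psi_2,\mathbf{0})$, the term $-k^2\int_{\Sigma^*}\Vu\cdot\nabla_y\psi_2^*$ is precisely the source term of the cell problem for $u_2$ (not a cross-term to be discarded), and when you test with $(\Vpsi,\mathbf{0},0,\mathbf{0})$, the term $-k^2\int_{\Sigma^*}\nabla_y u_2\cdot\Vpsi^*$ is precisely one of the contributions to $\mu_{\hom}$ (as you correctly state two sentences later). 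So drop that parenthetical claim; the rest of your argument goes through unchanged.
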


We emphasize that all cell problems are uniquely solvable due to the Theorem of Lax-Milgram.
(For \eqref{eq:cellproblem3}, note that its left-hand side is coercive because of $\Im(\varepsilon^{-1}_1)<0$.)
Unique solvability of the effective macroscopic equation \eqref{eq:effectiveeq} follows because $\Im(\mu_{\eff})$ is positive-definite in $\Omega$ according to Proposition \ref{prop:effective}, see \cite{BBF15hommaxwell} and \cite[Section 4]{Monk} for details.

The effective macroscopic equation reveals the physical properties of the material: For small $\delta$ it behaves (effectively) like a homogeneous scatterer $\Omega$ with inverse permittivity $(\varepsilon^{-1})_{\hom}$ and permeability $\mu_{\hom}$.
The occurrence of $\mu_{\hom}$, which is not present in \eqref{eq:scatteringweak}, can (physically) be interpreted as artificial magnetism.

\subsection{Comparison with the literature}
\label{subsec:literaturecomp}

In this subsection, we show the equivalence of our results and those available in the literature, namely \cite{CC15hommaxwell} and \cite{BBF09hommaxwell, BBF15hommaxwell}.
However, we already want to emphasize a few new aspects and advantages of our presentation:
\begin{itemize}
\item Presentation of a two-scale equation: This concise and elegant formulation so far has been hidden in the proofs of \cite{CC15hommaxwell}.
\item Uniqueness of the two-scale solution: By a slightly modified definition of the correctors (in comparison to \cite{CC15hommaxwell}, see below), we are able to prove uniqueness in nevertheless simple and natural function spaces. This is clearly a great advantage for analysis. 
\item A new formulation for $\mu_{\hom}$: As already discussed in \cite{BBF15hommaxwell} in detail, the computation of $\mu_{\hom}$ is very challenging, especially with respect to numerical implementations. 
In contrast to the two-dimensional case, $\mu_{\hom}$ does not only depend on the behavior of the magnetic field inside the inclusions (as one might expect), but also the surrounding medium $\Sigma^*$ has to be considered. This, of course, is also persistent in our formulation. Here, however, both parts decouple quite nicely.
Moreover, we are also able to use quite natural and easy to implement function spaces and cell problems in comparison to \cite{BBF15hommaxwell}.
\end{itemize}

\smallskip
\noindent
\textbf{Comparison with \cite{CC15hommaxwell}}.\hspace{0.05in}
Cherednichenko and Cooper \cite[Theorem 2.1]{CC15hommaxwell} obtain a very similar homogenization result to Theorem \ref{thm:twoscaleeq}.
Note that in \cite{CC15hommaxwell}, the sign of the identity term is twisted and a volume source term is present.
Instead of the corrector $\Vu_1$, \cite[Lemma 4.4]{CC15hommaxwell} already includes the effective matrix $(\varepsilon^{-1})_{\hom}$ (named $A_{\hom}$) in the two-scale equation.

The only crucial difference between Theorem \ref{thm:twoscaleeq} and \cite[Theorem 2.1]{CC15hommaxwell} is the different choice or construction of $u_2$ and $\Vu_3$.
Roughly speaking,  our $\Vu_3$ fulfills $\Vu_3 = \nabla_y u^1+u^2$ in $\Sigma$ for the functions $u^1$, $u^2$ defined in \cite[Theorem 2.1]{CC15hommaxwell}.
Basically, we cut off our $u_2$ at the boundary $\partial \Sigma$ and add the ``remaining'' normal boundary traces to $\Vu_3$, whereas in \cite{CC15hommaxwell} the function $u^1$ (corresponding to our $u_2$) is present on the whole cube $Y$.
Moreover, this different definition of the identity correctors leads to the lower regularity $\Vu_3\in \VH_0(\curl, \Sigma)$ instead of $u^2\in H^1_0(\Sigma)$ in \cite{CC15hommaxwell}.
The great advantage of our new formulation is the uniqueness of the two-scale solution.
In \cite{CC15hommaxwell}, only uniqueness of $u$ and of $\nabla_y u^1+u^2$ can be demonstrated.

\smallskip
\noindent
\textbf{Comparison with \cite{BBF15hommaxwell}}.\hspace{0.05in}
Comparing with \cite{BBF15hommaxwell}, we have $(\varepsilon^{-1})_{\hom}=(\varepsilon^{\eff})^{-1}$ and $\mu_{\hom}=\mu^{\eff}$, where $\mu^{\eff}$ and $\varepsilon^{\eff}$ are defined in \cite{BBF15hommaxwell}.
The relationship $(\varepsilon^{-1})_{\hom}=(\varepsilon^{\eff})^{-1}$  is shown in \cite[Lemma 4.4]{CC15hommaxwell}. 
Comparing the definition of $\mu_{\hom}$ and the definition of $\mu^{\eff}$ (via equations (5.23) and (5.21) of \cite{BBF15hommaxwell}), we observe that we have to prove 
\[\chi_{\Sigma^*}\nabla_y w_j^2+\chi_{\Sigma}\Vw_j^3 = \Vu^j,\]
where $w_j^2$ and $\Vw_j^3$ are defined in Theorem \ref{thm:effectiveeq} above and $\Vu^j$ is introduced in \cite[equation (5.21)]{BBF15hommaxwell}.
This means that we have to check that \[\widetilde{\Vw}_j:=\chi_{\Sigma^*}\nabla_y w_j^2+\chi_{\Sigma}\Vw_j^3\in X_0^{\Div}:=\{\Vv\in \VH^1_{\sharp}(Y)|\Div_y \Vv=0 \text{ in }Y, \curl_y \Vv=0 \text{ in }\Sigma, \oint\Vv=0\}\] and that $\widetilde{\Vw}_j$ fulfills equation (5.18) of \cite{BBF15hommaxwell}.
For that, we first prove the following lemma.

\begin{lemma}
\label{lem:identitycorrectors}
Let $w_j^2$ and $\Vw_j^3$ be the solutions to \eqref{eq:cellproblem2} and \eqref{eq:cellproblem3} from Theorem \ref{thm:effectiveeq}. The function $\widetilde{\Vw}_j:=\chi_{\Sigma^*}\nabla_y w_j^2+\chi_\Sigma\Vw_j^3$ fulfills
\[\widetilde{\Vw}_j\in \VH^1_\sharp(Y) \text{ with }\Div_y\widetilde{\Vw}_i=0.\]
Consequently, the same holds true for $\chi_{\Sigma^*}\nabla_y u_2+\chi_\Sigma \Vu_3$ with $u_2, \Vu_3$ the correctors defined in \eqref{eq:correctors} of Theorem \ref{thm:effectiveeq}.
\end{lemma}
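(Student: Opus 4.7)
The approach is to verify the two claims---divergence-freeness and $\VH^1_\sharp(Y)$-regularity---separately, reducing each to standard elliptic estimates on the two smooth subdomains $\Sigma^*$ and $\Sigma$ together with transmission conditions on the interface $\partial\Sigma$. As a first step I would extract the strong forms from both cell problems: \eqref{eq:cellproblem2} says that $w_j^2$ is harmonic in $\Sigma^*$ and satisfies the natural Neumann-type interface condition $(\Ve_j+k^2\nabla_y w_j^2)\cdot\Vn_{\Sigma^*}=0$ on $\partial\Sigma$, while testing \eqref{eq:cellproblem3} against $\Vpsi_3=\nabla_y\phi$ for $\phi\in H^1(\Sigma)$ constant on $\partial\Sigma$ (so that $\nabla_y\phi\in\VH_0(\curl,\Sigma)$) gives $\Div_y\Vw_j^3=0$ in $\Sigma$. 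The divergence-free claim then follows by testing the distributional identity against an arbitrary $\varphi\in C^\infty_\sharp(Y)$: after splitting the integral into its $\Sigma^*$- and $\Sigma$-contributions and integrating by parts, the bulk terms vanish, the boundary contributions on $\partial Y$ cancel by periodicity, and the two surface integrals on $\partial\Sigma$ cancel against each other because the normal data of $\nabla_y w_j^2$ from the $\Sigma^*$-side and of $\Vw_j^3$ from the $\Sigma$-side coincide.

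For the $\VH^1_\sharp(Y)$-regularity the plan is to invoke the standard embedding $\VH_\sharp(\curl,Y)\cap\VH_\sharp(\Div,Y)\hookrightarrow\VH^1_\sharp(Y)$ on the torus; together with the divergence-free condition just shown, it suffices to verify $\curl_y\widetilde{\Vw}_j\in L^2_\sharp(Y)$. The interior curls are under control---zero in $\Sigma^*$ and equal to $\curl_y\Vw_j^3\in L^2(\Sigma)$ in $\Sigma$---so the only obstruction is a possible distributional surface delta of $\curl_y\widetilde{\Vw}_j$ along $\partial\Sigma$, which equals the jump of the tangential trace of $\widetilde{\Vw}_j$ across $\partial\Sigma$. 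On the $\Sigma$-side this tangential trace vanishes by the defining condition $\Vw_j^3\times\Vn=0$ of $\VH_0(\curl,\Sigma)$, and on the $\Sigma^*$-side it equals $\nabla_T w_j^2$, which must therefore be shown to vanish on $\partial\Sigma$.

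This last step is what I expect to be the main obstacle: showing that $w_j^2$ is constant along each connected component of $\partial\Sigma$ is not an immediate consequence of \eqref{eq:cellproblem2} viewed as a decoupled Neumann problem. My strategy is to exploit the precise definition of the test space $H^1_{\sharp,0}(\Sigma^*)$ as the restriction of zero-mean periodic $H^1$-functions on $Y$, together with the structural coupling of $u_2$ and $\Vu_3$ in the combined identity term of the two-scale bilinear form $\CB$ in \eqref{eq:twoscaleeq}; this coupling ultimately reflects the tangential continuity of the magnetic field $\Vu_\delta\in\VH(\curl,G)$ across the material interfaces $\partial\Sigma_\delta$, which passes to the two-scale limit and forces the required matching $\nabla_T w_j^2=0$ on $\partial\Sigma$. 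Once this tangential matching is in place, interior elliptic regularity (giving $w_j^2\in H^2(\Sigma^*)$ by harmonicity on the $C^2$ domain $\Sigma^*$, and $\Vw_j^3\in\mathbf{H}^1(\Sigma)$ by div-curl regularity on the $C^2$ domain $\Sigma$) combined with the embedding above yields $\widetilde{\Vw}_j\in\VH^1_\sharp(Y)$, and the consequential statement for the correctors $u_2,\Vu_3$ follows immediately from the product representation \eqref{eq:correctors}.
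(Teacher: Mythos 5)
Your divergence-free half is essentially the paper's own proof, in the same order and with the same tests: the paper obtains $\Div_y\nabla_y w_j^2=0$ in $\Sigma^*$ from \eqref{eq:cellproblem2} with test functions vanishing on $\partial\Sigma$, $\Div_y\Vw_j^3=0$ in $\Sigma$ from \eqref{eq:cellproblem3} tested with gradients, then matches the normal traces across $\partial\Sigma$ by re-testing with boundary-nonvanishing traces, and concludes $\widetilde{\Vw}_j\in\VH_\sharp(\Div,Y)$ with $\Div_y\widetilde{\Vw}_j=0$; your closing embedding $\VH_\sharp(\curl,Y)\cap\VH_\sharp(\Div,Y)\hookrightarrow\VH^1_\sharp(Y)$ on the torus is exactly the paper's citation of \cite[Lemma 4.7]{BBF15hommaxwell}. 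Your admissibility observation ($\nabla_y\phi\in\VH_0(\curl,\Sigma)$ requires $\phi$ constant on $\partial\Sigma$) is actually more careful than the paper's $H^1_0(\Sigma)$ choice. One caveat: that test family yields $\Div_y\Vw_j^3=0$ plus only the \emph{total}-flux identity over $\partial\Sigma$ (which is automatic once the divergence vanishes), so the pointwise normal matching $(\Ve_j+k^2\Vw_j^3)\cdot\Vn=0$ that your interface cancellation needs is asserted rather than derived in your write-up; the paper at least states this as a separate deduction step.

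Where you genuinely depart from the paper is the tangential step, and there your proposal has a real gap. The paper records $\widetilde{\Vw}_j\in\VH_\sharp(\curl,Y)$ as ``obvious'' and stops: $\curl_y\widetilde{\Vw}_j$ vanishes in $\Sigma^*$, equals $\curl_y\Vw_j^3\in L^2$ in $\Sigma$, and the $\Sigma$-side tangential trace vanishes by the essential condition in $\VH_0(\curl,\Sigma)$; no argument that $w_j^2$ is constant along $\partial\Sigma$ appears anywhere. You correctly identify that global $\VH(\curl)$-membership of the glued field also requires the $\Sigma^*$-side tangential trace $\nabla_y w_j^2\times\Vn$ to vanish on $\partial\Sigma$ --- a legitimate subtlety the paper's proof does not spell out --- but your proposed resolution is not a proof. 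It is only announced as a strategy (``exploit the structural coupling \ldots which \ldots forces the required matching''), the decisive implication never being established; and as a route it is circular: Lemma \ref{lem:identitycorrectors} concerns the decoupled cell solutions of \eqref{eq:cellproblem2}--\eqref{eq:cellproblem3} and is used in Subsection \ref{subsec:literaturecomp} to identify them with the cell functions of \cite{BBF15hommaxwell}, while the tangential-continuity information you want to import from the two-scale limit is precisely what is encoded in the a priori choice $\Vu_3\in L^2(\Omega;\VH_0(\curl,\Sigma))$ in Theorem \ref{thm:twoscaleeq} together with the corrector formulas \eqref{eq:correctors} --- you would be assuming the interface compatibility of the cell solutions in order to prove it. Moreover, as you note yourself, the decoupled Neumann problem \eqref{eq:cellproblem2} by itself imposes no boundary-constant trace on $w_j^2$, and your sketch does not explain how the implication would hold for a single fixed $j$ (at minimum the coefficient vectors $\curl\Vu$ and $k^2\Vu$ in \eqref{eq:correctors} would have to span $\cz^3$ pointwise). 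So: divergence part correct and identical to the paper; $H^1$ part incomplete --- you replace the paper's one-line conclusion with an unproved and logically circular detour, and to fix it you must either justify the $\VH_\sharp(\curl,Y)$-membership directly from the cell problems and the chosen function spaces, or concede, as the paper implicitly does, that this step is taken as part of the setup.
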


\begin{proof}
We have $\Div_y\nabla_y w_j^2=0$ in $\Sigma^*$ because of \eqref{eq:cellproblem2} tested with $\psi_2\in H^1_{\sharp, 0}(\Sigma^*)$ with $\psi_2=0$ on $\partial \Sigma$.
By inserting $\Vpsi_3=\nabla_y \psi_3$ with $\psi_3\in H^1_0(\Sigma)$ into \eqref{eq:cellproblem3}, we obtain $\Div_y \Vw_j^3=0$ in $\Sigma$.
Inserting now test functions as before, but without vanishing (normal) traces on $\partial \Sigma$, we deduce that the normal traces of $\nabla_y w_j^2$ and $\Vw_j^3$ coincide on $\partial \Sigma$.
These properties together imply $\widetilde{\Vw}_j\in \VH_\sharp(\Div, Y)$ with $\Div_y \widetilde{\Vw}_j=0$.
Since also obviously $\widetilde{\Vw}_j\in \VH_\sharp(\curl, Y)$, the assertion follows with \cite[Lemma 4.7]{BBF15hommaxwell}. 
\end{proof}

The inclusion $\widetilde{\Vw}_j\in X_0^{\Div}$ now follows from the previous lemma and because $\widetilde{\Vw}_j$ is given as a gradient on $\Sigma^*$. 
Equation (5.18) of \cite{BBF15hommaxwell} follows from our two cell problems \eqref{eq:cellproblem2} and \eqref{eq:cellproblem3} by a direct calculation, which we do not give here.
Note that in \cite{BBF15hommaxwell}, $\varepsilon_0$ and $\varepsilon_1$ are constants on $\Sigma^*$ and $\Sigma$, respectively.

\section{Stability and regularity analysis for the homogenized system}
\label{sec:analysis}
In the previous section, we have presented two variational problems, the two-scale equation and the homogenized effective system.
This section is devoted to a detailed analysis of those problems with the aim to derive stability and regularity results.
We want to emphasize that this stability and regularity analysis is a prerequisite for the a priori estimates in Section  \ref{subsec:aprioriindef}.

We start this section with two lemmas concerning the two-scale equation \eqref{eq:twoscaleeq}.

\begin{lemma}
\label{lem:equivnorms}
The two-scale energy norm
\begin{equation}
\label{eq:energynorm}
\begin{split}
\|(\Vv, \Vv_1, v_2, \Vv_3)\|^2_e&:=\|\curl\Vv+\curl_y \Vv_1\|^2_{G\times \Sigma^*}+\|\curl_y \Vv_3\|^2_{\Omega\times \Sigma}\\
&\qquad+k^2\|\Vv+\chi_{\Sigma^*}\nabla_y v_2+\chi_\Sigma\Vv_3\|^2_{G\times Y}+k\|\Vv_T\|^2_{\partial G}
\end{split}
\end{equation}
is equivalent to the following (natural) norms on $\CH$
\begin{align}
\nonumber
\|(\Vv, \Vv_1, v_2, \Vv_3)\|^2_\CH:=\|\Vv\|^2_{\VH_{\mathrm{imp}}}+\|\curl_y\Vv_1\|^2_{\Omega\times \Sigma^*}+\|\nabla_y v_2\|^2_{\Omega\times \Sigma^*}+\|\Vv_3\|_{L^2(\Omega; \VH(\curl, \Sigma))},\\
\label{eq:weigthedCHnorm}
\|(\Vv, \Vv_1, v_2, \Vv_3)\|^2_{k; \CH}:=\|\Vv\|^2_{\mathrm{imp}; k}+\|\curl_y\Vv_1\|^2_{\Omega\times \Sigma^*}+k^2\|\nabla_y v_2\|^2_{\Omega\times \Sigma^*}+\|\Vv_3\|_{\curl; k; \Omega\times \Sigma}.
\end{align}
The equivalence constants between \eqref{eq:energynorm} and \eqref{eq:weigthedCHnorm} are independent of $k$.
\end{lemma}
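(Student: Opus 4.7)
The upper bound $\|\cdot\|_e\le C\|\cdot\|_{k;\CH}$ follows directly from the triangle inequality applied to each constituent square: for instance
\[
\|\curl\Vv+\curl_y\Vv_1\|^2_{G\times\Sigma^*}\le 2\|\curl\Vv\|^2_G+2\|\curl_y\Vv_1\|^2_{\Omega\times\Sigma^*}
\]
(using $\Vv_1=0$ on $G\setminus\Omega$ and $|\Sigma^*|\le 1$), while
\[
k^2\|\Vv+\chi_{\Sigma^*}\nabla_y v_2+\chi_\Sigma\Vv_3\|^2_{G\times Y}\le 3k^2\bigl(\|\Vv\|^2_G+\|\nabla_y v_2\|^2_{\Omega\times\Sigma^*}+\|\Vv_3\|^2_{\Omega\times\Sigma}\bigr)
\]
(thanks to $\chi_{\Sigma^*}\chi_\Sigma=0$ and $|Y|=1$). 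Neither constant depends on $k$. The remaining two terms coincide on both sides.

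For the reverse bound, the strategy is to expand each squared sum in $\|\cdot\|_e^2$ and absorb the resulting cross terms. On $G\setminus\overline{\Omega}$ the correctors $\Vv_1,v_2,\Vv_3$ all vanish, so the curl term automatically produces $|\Sigma^*|\|\curl\Vv\|^2_{G\setminus\Omega}$ and the $L^2$ term produces $\|\Vv\|^2_{G\setminus\Omega}$. Inside $\Omega$ one expands
\[
\|\curl\Vv+\curl_y\Vv_1\|^2_{\Omega\times\Sigma^*}=|\Sigma^*|\|\curl\Vv\|^2_\Omega+\|\curl_y\Vv_1\|^2+2\Re\!\int_\Omega\!\curl\Vv\cdot\Bigl(\int_{\Sigma^*}\!\curl_y\Vv_1\,dy\Bigr)^{\!*}\!dx
\]
and analogously for the $L^2$-term, where the $(\nabla_y v_2,\Vv_3)$-cross term vanishes by disjoint supports. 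The two remaining cross terms involving $\curl\Vv$ (resp.\ $\Vv$) are dominated by Cauchy-Schwarz in $y$ and $x$ combined with Young's inequality, producing bounds of the type $\eta|\Sigma^*|\|\curl\Vv\|^2_\Omega+C(\eta)\|\curl_y\Vv_1\|^2$ and $\eta k^2\|\Vv\|^2_\Omega+C(\eta) k^2(\|\nabla_y v_2\|^2+\|\Vv_3\|^2)$.

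The main obstacle is to choose the Young parameter $\eta$ small enough that the coefficients of $\|\curl\Vv\|^2_\Omega$ and $\|\Vv\|^2_\Omega$ stay strictly positive after absorption. This is possible thanks to the algebraic fact that a nonzero $y$-constant cannot be realized as $\curl_y\Vv_1$ for $\Vv_1\in\widetilde\VH_\sharp(\curl,\Sigma^*)$ nor as $\chi_{\Sigma^*}\nabla_y v_2+\chi_\Sigma\Vv_3$ for admissible $(v_2,\Vv_3)$: for $\Vv_1$ the identity $\int_{\Sigma^*}\curl_y\Vv_1\,dy=-\int_{\partial\Sigma}\Vn^\Sigma\times\Vv_1\,d\sigma$ (the $\partial Y$-contributions cancel by periodicity) together with the 2-cycle constraint on the range of $\curl_y$ on $\VH_\sharp(\curl,Y)$ excludes nonzero constants; for the second, $\nabla_y v_2=-\Vv$ on $\Sigma^*$ forces $v_2=-\Vv\cdot y+$const, which is $Y$-periodic only if $\Vv=0$, and then $\Vv_3=-\Vv=0$. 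This exclusion yields a strictly positive ``angle'' between the macroscopic (constant-in-$y$) and the microscopic subspaces in $L^2(\Sigma^*)$ resp.\ $L^2(Y)$, by a closed-range/compactness argument with a purely geometric constant. The needed Young-parameter can then be selected to close the absorption. Finally, since $k$ enters only as the uniform multiplicative weight $k^2$ on the matching $L^2$-terms and $k$ on the matching boundary term, all equivalence constants are $k$-independent.
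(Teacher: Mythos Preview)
Your proof follows essentially the same route as the paper's. The paper's argument is stated very tersely: it names the ``sharpened Cauchy--Schwarz inequality for the mixed terms'' (with a reference to the two-dimensional case) as the essential ingredient, and observes that the quotient norms on $H^1_{\sharp,0}(\Sigma^*)$ and $\widetilde{\VH}_\sharp(\curl,\Sigma^*)$ are genuine norms. Your ``strictly positive angle between the macroscopic (constant-in-$y$) and the microscopic subspaces'' is exactly the sharpened Cauchy--Schwarz restated, and the Young-inequality absorption you describe is how one turns that angle into the lower bound. So the strategy coincides.

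One point where your write-up is weaker than it should be: the exclusion of nonzero constants from the range of $\curl_y$ on $\widetilde{\VH}_\sharp(\curl,\Sigma^*)$ is not settled by the Stokes identity plus a vaguely invoked ``2-cycle constraint''. The Stokes identity alone only relates $\int_{\Sigma^*}\curl_y\Vv_1$ to a boundary integral; it does not force the mean to vanish, and the mean need not vanish. What you actually need is the (geometric) statement that no nonzero constant lies in the $L^2(\Sigma^*)$-closure of $\{\curl_y\Vv_1\}$. A clean way to see this is: if $\curl_y\Vv_1=\Vc$ on the simply connected set $\Sigma^*$, then $\Vv_1=\tfrac12\Vc\times y+\nabla_y\phi$ there, and matching the tangential traces on opposite faces of $\partial Y$ (periodicity) forces $\Vc=0$. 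Since constants form a finite-dimensional subspace and the range of $\curl_y$ is closed, trivial intersection gives the strictly positive angle. Your arguments for $v_2$ and $\Vv_3$ are fine as stated.
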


\begin{proof}
The essential ingredient is a sharpened Cauchy-Schwarz inequality for the mixed terms, see the two-dimensional case \cite{OV16hmmhelmholtz}.
Note that due to the choices of $H^1_{\sharp, 0}(\Sigma^*)$ and $\widetilde{\VH}_\sharp(\curl, \Sigma^*)$, the $H^1$- and $\VH(\curl)$-semi norms are norms on those function spaces, respectively.
\end{proof}

The two-scale sesquilinear form $\CB$ from Theorem \ref{thm:twoscaleeq} is obviously continuous with respect to the energy norm \eqref{eq:energynorm} with a $k$-independent constant.
Due to the large kernel of the curl-operator, the $L^2$-term is no compact perturbation of the curl-term.
In order to prove a G{\aa}rding-type inequality, we have to use a Helmholtz-type splitting.
We have the following decomposition of  $(\Vv,\Vv_3) \in \VH_{\mathrm{imp}}(G)\times L^2(\Omega; \VH_0(\curl,\Sigma))$:
\begin{equation}
\label{eq:decomp}
\begin{split}
\Vv+\chi_\Sigma\Vv_3&=\Vz+\chi_\Sigma\Vz_3+\nabla \theta +\chi_\Sigma\nabla_y \theta_3\\
\text{with}\quad \theta&\in H^1_{\partial G}:=\{\phi\in H^1(G)|\phi\text{ constant on }\partial G\},\quad \theta_3\in L^2(\Omega; H^1_0(\Sigma)),\\
\text{and}\quad 0&=(\Vz+\chi_\Sigma\Vz_3, \nabla\eta+\chi_\Sigma\nabla_y \eta_3)_{L^2(G\times Y)}\qquad \forall (\eta, \eta_3)\in H^1_{\partial G}\times L^2(\Omega; H^1_0(\Sigma)).
\end{split}
\end{equation}
The orthogonality in the last line implies a weak divergence-free constraint on $\Vz+\chi_\Sigma\Vz_3$, which implies in turn additional regularity of $\Vz$ and $\Vz_3$, see Remark \ref{rem:reghelmholtzdecomp}.
See \cite{Hipt15maxwellcontdiscr} for a similar approach using the rgeular decomposition.

\begin{lemma}
\label{lem:garding}
Define the sign-flip isomorphism $F:\CH\to\CH$ via 
\[F((\Vv, \Vv_1, v_2, \Vv_3)):=(\Vz-\nabla\theta, \Vv_1, -v_2, \Vz_3-\nabla_y\theta_3)\]
with the Helmholtz decomposition from \eqref{eq:decomp}.
There exist $C_g>0$ and $\gamma_{\mathrm{ell}}>0$, both independent of $k$, such that
\begin{equation}
\label{eq:gardingtwosc}
\bigl|\CB((\Vv, \Vv_1, v_2, \Vv_3), F((\Vv, \Vv_1, v_2, \Vv_3)))+C_gk^2\|\Vz+\chi_\Sigma\Vz_3\|^2_{L^2(G\times Y)}\bigr|\geq \gamma_{\mathrm{ell}}\|\Vv\|^2_e.
\end{equation}
\end{lemma}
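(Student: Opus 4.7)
The plan is a two-scale adaptation of the classical G{\aa}rding trick: insert $F\Vu$ into $\CB(\Vu,\cdot)$ for $\Vu=(\Vv,\Vv_1,v_2,\Vv_3)$, so that the sign-flip in $F$ negates exactly the gradient contributions of $\Vv,\Vv_3$ arising from \eqref{eq:decomp} (together with the whole of $v_2$), turning the $-k^2$ $L^2$-contribution of the gradient part into $+k^2$ while leaving the curl and boundary pieces essentially intact; the residual $-k^2\|\Vz+\chi_\Sigma\Vz_3\|^2$ is then absorbed by the added $C_g k^2$-term.

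First I would evaluate the curl contributions. Restricting \eqref{eq:decomp} pointwise to $y\in\Sigma^*$ (where $\chi_\Sigma=0$) yields $\Vv=\Vz+\nabla\theta$ on $G$, and then restricting to $y\in\Sigma$ yields $\Vv_3=\Vz_3+\nabla_y\theta_3$ on $\Omega\times\Sigma$. Since $\curl\nabla\theta=0$ and $\curl_y\nabla_y\theta_3=0$, the first two terms of $\CB(\Vu,F\Vu)$ reduce to
\[
\int_\Omega\!\!\int_{\Sigma^*}\varepsilon_0^{-1}|\curl\Vv+\curl_y\Vv_1|^2\,dy\,dx+\int_\Omega\!\!\int_\Sigma\varepsilon_1^{-1}|\curl_y\Vv_3|^2\,dy\,dx
\]
plus the analogous piece on $G\setminus\overline\Omega$. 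Their real parts control the curl summands of $\|\cdot\|_e^2$ because $\Re(\varepsilon_0^{-1}),\Re(\varepsilon_1^{-1})\geq c_0>0$.

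For the $L^2$-term, I would set $A:=\Vv+\chi_{\Sigma^*}\nabla_y v_2+\chi_\Sigma\Vv_3=P+Q$ with
\[
P:=\Vz+\chi_\Sigma\Vz_3,\qquad Q:=\nabla\theta+\chi_{\Sigma^*}\nabla_y v_2+\chi_\Sigma\nabla_y\theta_3,
\]
so that by the definition of $F$ the tested field equals $P-Q$. A direct expansion gives
\[
\int_{G\times Y}A\cdot(P-Q)^*=\|P\|_{L^2(G\times Y)}^2-\|Q\|_{L^2(G\times Y)}^2+2i\,\Im(Q,P)_{L^2(G\times Y)},
\]
and the orthogonality in \eqref{eq:decomp} kills $(P,\nabla\theta+\chi_\Sigma\nabla_y\theta_3)$, leaving only $(Q,P)=(\chi_{\Sigma^*}\nabla_y v_2,\Vz)$. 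Adding $C_g k^2\|P\|^2$, the real part of this contribution becomes $(C_g-1)k^2\|P\|^2+k^2\|Q\|^2$, which via $\|A\|^2\leq2(\|P\|^2+\|Q\|^2)$ controls $k^2\|A\|^2$ provided $C_g$ is chosen large enough. The boundary term is easy: $\theta$ constant on $\partial G$ forces $(\nabla\theta)_T=0$, so $(\Vz-\nabla\theta)_T=\Vv_T$ and the boundary integral equals $-ik\|\Vv_T\|_{L^2(\partial G)}^2$, purely imaginary.

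Finally I would take moduli. The real part already controls the curl summands and $k^2\|A\|^2$ of $\|\cdot\|_e^2$, while the imaginary part collects $-k\|\Vv_T\|_{\partial G}^2$, the nonpositive $\int\Im(\varepsilon_1^{-1})|\curl_y\Vv_3|^2$ (bounded by the real curl contribution it accompanies, since $|\varepsilon_1^{-1}|$ is bounded), and the cross term $2k^2\Im(\chi_{\Sigma^*}\nabla_y v_2,\Vz)$. Using $|z|^2=\Re(z)^2+\Im(z)^2$ together with Young's inequality to absorb the cross term into a sufficiently large multiple of $C_g k^2\|P\|^2+k^2\|Q\|^2$ recovers the boundary summand $k\|\Vv_T\|^2_{\partial G}$ of $\|\cdot\|_e^2$; taking $C_g$ large enough independently of $k$ then yields $\gamma_{\mathrm{ell}}>0$ independent of $k$.

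The main obstacle I expect is the cross term $(\chi_{\Sigma^*}\nabla_y v_2,\Vz)$: the Helmholtz-type decomposition \eqref{eq:decomp} is applied only to the pair $(\Vv,\Vv_3)$ and does not enforce orthogonality of $P$ against $\chi_{\Sigma^*}\nabla_y v_2$, because $v_2\in L^2(\Omega;H^1_{\sharp,0}(\Sigma^*))$ lives in a different space than $\theta_3\in L^2(\Omega;H^1_0(\Sigma))$ and its boundary trace on $\partial\Sigma$ need not vanish. Controlling this term \emph{and} simultaneously recovering the purely imaginary boundary summand $k\|\Vv_T\|^2_{\partial G}$ is the delicate step that determines the choice of $C_g$.
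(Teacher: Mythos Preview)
Your proposal is correct and follows the same strategy as the paper's own (very brief) proof: use the sign-flip isomorphism to turn the $-k^2$ contribution of the gradient parts into $+k^2$, exploit the orthogonality of the Helmholtz-type decomposition \eqref{eq:decomp} to kill most mixed terms, and absorb the surviving cross term via Cauchy--Schwarz and Young. Your explicit identification of the residual coupling $(\chi_{\Sigma^*}\nabla_y v_2,\Vz)$---which the paper hides behind the phrase ``can be absorbed''---is exactly the mixed term the paper has in mind; note that the norm equivalence of Lemma~\ref{lem:equivnorms} (sharpened Cauchy--Schwarz) guarantees $\|\nabla_y v_2\|^2\lesssim\|Q\|^2$ and $\|\Vz\|^2\lesssim\|P\|^2$, so the absorption indeed succeeds with $C_g$ independent of $k$.
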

\begin{proof}
The sign-flip isomorphism and the added identity term correct the ``wrong'' sign of the sesquilinear form $\CB$ and make it coercive.
Mixed terms between $\theta$ and $\Vz$ or $\theta_3$ and $\Vz_3$, respectively, either vanish due to the orthogonality of the Helmholtz decomposition or can be absorbed using Cauchy-Schwarz and Young inequality.
\end{proof}

We now analyze the stability and higher regularity of the two-scale solution by analyzing the cell problems and the homogenized equation separately.
As we have already discussed, all cell problems are coercive, so that their stability is an easy consequence.

\begin{lemma}
\label{lem:stabcell}
The correctors fulfill the stability estimates
\begin{align*}
\|\curl_y\Vu_1\|_{L^2(\Omega\times \Sigma^*)}&\lesssim\|\curl \Vu\|_{L^2(\Omega)},&&&
k\|\nabla_y u_2\|_{L^2(\Omega\times \Sigma^*)}&\lesssim \|\Vu\|_{\mathrm{imp}; k; \Omega},\\
\|\curl_y \Vu_3\|_{L^2(\Omega\times \Sigma)}+k\|\Vu_3\|_{L^2(\Omega\times \Sigma)}&\lesssim \|\Vu\|_{\mathrm{imp}; k; \Omega}.
\end{align*} 
\end{lemma}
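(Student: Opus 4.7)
The plan is to test each cell problem \eqref{eq:cellproblem1}--\eqref{eq:cellproblem3} with its own solution, exploit the coercivity induced by the sign conditions on the coefficients, and then reassemble the correctors according to \eqref{eq:correctors}. Since the cell problems are parametrized by the macroscopic variable $x\in\Omega$ only through the (constant-in-$y$) data $(\curl\Vu)_j$ and $k^2\Vu_j$, pointwise-in-$x$ stability for $\Vw_l^1, w_l^2, \Vw_l^3$ integrates up to the claimed bounds after summation over $l$ and multiplication by the macroscopic weights. The only real obstacle is the indefinite cell problem \eqref{eq:cellproblem3}: there I will have to use the complex absorption coming from $\Im(\varepsilon_1^{-1})<0$ to control the curl-term and then bootstrap to an $L^2$-bound with the correct $k$-scaling.

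For $\Vw_l^1$, I test \eqref{eq:cellproblem1} with $\Vpsi_1=\Vw_l^1$ and use that $\varepsilon_0^{-1}$ is real-valued and uniformly positive; a Cauchy--Schwarz estimate on the right-hand side gives $\|\curl_y\Vw_l^1\|_{L^2(\Sigma^*)}\lesssim 1$ with a $k$-independent constant. Since $\Vu_1=\chi_\Omega\chi_{\Sigma^*}\sum_j(\curl\Vu)_j\Vw_j^1$, squaring and integrating in $x$ yields the first estimate. For $w_l^2$, testing \eqref{eq:cellproblem2} with $\psi_2=w_l^2$ gives $k^2\|\nabla_y w_l^2\|_{L^2(\Sigma^*)}^2\le \|\Ve_l\|_{L^2(\Sigma^*)}\|\nabla_y w_l^2\|_{L^2(\Sigma^*)}$, so $\|\nabla_y w_l^2\|_{L^2(\Sigma^*)}\lesssim k^{-2}$. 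Together with $u_2=\chi_\Omega\chi_{\Sigma^*}\sum_j k^2\Vu_j w_j^2$, this produces $k\|\nabla_y u_2\|_{L^2(\Omega\times\Sigma^*)}\lesssim k\|\Vu\|_{L^2(\Omega)}\le\|\Vu\|_{\mathrm{imp};k;\Omega}$.

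The hard part is \eqref{eq:cellproblem3}. Writing $\varepsilon_1^{-1}=R-iS$ with $R,S\in L^\infty(\Sigma)$ both bounded away from zero by the assumptions $\Re(\varepsilon_1)>0$ and $\Im(\varepsilon_1)>0$, I test with $\Vpsi_3=\Vw_l^3$ and separate real and imaginary parts. The imaginary part yields
\[
\int_\Sigma S\,|\curl_y\Vw_l^3|^2\,dy=-\Im\!\int_\Sigma\Ve_l\cdot\Vw_l^{3,*}\,dy,
\]
hence $\|\curl_y\Vw_l^3\|_{L^2(\Sigma)}^2\lesssim\|\Vw_l^3\|_{L^2(\Sigma)}$. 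The real part gives
\[
k^2\|\Vw_l^3\|_{L^2(\Sigma)}^2=\int_\Sigma R\,|\curl_y\Vw_l^3|^2\,dy-\Re\!\int_\Sigma\Ve_l\cdot\Vw_l^{3,*}\,dy\lesssim \|\curl_y\Vw_l^3\|_{L^2(\Sigma)}^2+\|\Vw_l^3\|_{L^2(\Sigma)}.
\]
Substituting the first estimate into the second yields $k^2\|\Vw_l^3\|_{L^2(\Sigma)}^2\lesssim\|\Vw_l^3\|_{L^2(\Sigma)}$, i.e.\ $\|\Vw_l^3\|_{L^2(\Sigma)}\lesssim k^{-2}$ and, feeding this back, $\|\curl_y\Vw_l^3\|_{L^2(\Sigma)}\lesssim k^{-1}$. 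Since $\Vu_3=\chi_\Omega\chi_\Sigma\sum_jk^2\Vu_j\Vw_j^3$, I conclude $\|\Vu_3\|_{L^2(\Omega\times\Sigma)}\lesssim\|\Vu\|_{L^2(\Omega)}$ and $\|\curl_y\Vu_3\|_{L^2(\Omega\times\Sigma)}\lesssim k\|\Vu\|_{L^2(\Omega)}$, which together with $k\|\Vu\|_{L^2(\Omega)}\le\|\Vu\|_{\mathrm{imp};k;\Omega}$ gives the third bound. The key point to emphasize is that the $k$-scaling in the final estimate comes entirely from the $k^2$-weighting in the definition of $\Vu_3$ combined with the inverse $k$-powers extracted from the indefinite cell problem via the imaginary-part argument, and that all constants hidden in $\lesssim$ depend only on $\Sigma$, $\inf R$, $\inf S$, and $\|\varepsilon_1^{-1}\|_{L^\infty}$, hence not on $k\ge k_0$.
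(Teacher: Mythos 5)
Your proposal is correct and is precisely the argument the paper intends but leaves implicit: the paper only remarks that all cell problems are coercive so that stability is ``an easy consequence'', and your testing of \eqref{eq:cellproblem1}--\eqref{eq:cellproblem3} with their own solutions, with the real/imaginary-part splitting extracting $\|\Vw_l^3\|_{L^2(\Sigma)}\lesssim k^{-2}$ and $\|\curl_y\Vw_l^3\|_{L^2(\Sigma)}\lesssim k^{-1}$ before reassembling via \eqref{eq:correctors}, supplies exactly the omitted details with the correct $k$-scalings. The only point worth making explicit is that the uniform lower bound on $S=-\Im(\varepsilon_1^{-1})$ is an implicit standing assumption of the paper (needed already for its coercivity claim), which you correctly flag in your constant dependencies.
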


With this knowledge on the cell problems, we can now deduce some useful properties of the effective parameters.

\begin{proposition}
\label{prop:effective}
The effective parameters have the following properties:
\begin{itemize}
\item $(\varepsilon^{-1})_{\hom}$ is a piece-wise constant, real-valued, symmetric positive definite matrix;
\item $\mu_{\hom}$ is a piece-wise constant, complex-valued, symmetric (not hermitian!) matrix with upper bound independent from $k$;
\item $\Im(\mu_{\hom})$ is symmetric positive-definite (and thus $\mu_{\hom}$ is invertible);
\item if $\varepsilon_1$ is constant, we have
\[\Im(\mu_{\hom})\xi\cdot \xi^*\geq Ck^{-2}|\xi|^2\qquad \forall \xi\in \cz^3.\]
\end{itemize}
\end{proposition}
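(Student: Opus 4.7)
The plan is to verify each of the four bullets in turn, relying on the cell problems \eqref{eq:cellproblem1}--\eqref{eq:cellproblem3}, the combined corrector from Lemma~\ref{lem:identitycorrectors}, and the dissipation information $\Im\varepsilon_1>0$.

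For $(\varepsilon^{-1})_{\hom}$, all four properties follow from a standard symmetrization. Testing \eqref{eq:cellproblem1} for index $l$ against $\Vw_j^1$ yields
\[
\bigl((\varepsilon^{-1})_{\hom}\bigr)_{j,l}=\int_{\Sigma^*}\varepsilon_0^{-1}\bigl(\Ve_l+\curl_y\Vw_l^1\bigr)\cdot\bigl(\Ve_j+\curl_y\Vw_j^1\bigr)\,dy,
\]
which is manifestly real (choose $\Vw_l^1$ real since \eqref{eq:cellproblem1} has real data), symmetric in $(j,l)$, and non-negative. Strict positivity follows since $\xi+\curl_y\Vw_\xi^1\equiv 0$ in $\Sigma^*$ would force the constant $\xi$ to be a curl of an admissible periodic corrector, which by the quotient structure of $\widetilde\VH_\sharp(\curl,\Sigma^*)$ and the periodicity of the trace forces $\xi=0$.

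For $\mu_{\hom}$, I would introduce the combined identity corrector $\widetilde\Vw_l:=\chi_{\Sigma^*}\nabla_y w_l^2+\chi_\Sigma\Vw_l^3\in\VH^1_\sharp(Y)$ from Lemma~\ref{lem:identitycorrectors} and set $\vec\chi_l:=\Ve_l+k^2\widetilde\Vw_l$. Cross-testing \eqref{eq:cellproblem2} with $\psi_2=w_j^2$ (real) and \eqref{eq:cellproblem3} with $\Vpsi_3=\overline{\Vw_j^3}$ gives the unconjugated symmetric representation
\[
(\mu_{\hom})_{j,l}=\int_Y\vec\chi_l\cdot\vec\chi_j\,dy-k^2\int_\Sigma\varepsilon_1^{-1}\curl_y\Vw_l^3\cdot\curl_y\Vw_j^3\,dy,
\]
establishing symmetry; complex-valuedness is clear since $\varepsilon_1^{-1}\in\cz$ forces $\Vw_l^3$ complex. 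For the $k$-independent upper bound on $|\mu_{\hom}|$, I would test \eqref{eq:cellproblem3} with $\Vw_l^3$, split the complex identity into real and imaginary parts, and use the non-degeneracy $|\Im\varepsilon_1^{-1}|>0$ together with the Friedrichs-type inequality $\|\Vv\|_{L^2(\Sigma)}\lesssim\|\curl_y\Vv\|_{L^2(\Sigma)}$ on the divergence-free part of $\VH_0(\curl,\Sigma)$ (valid on the simply connected smooth $\Sigma$), to obtain in one shot the a priori estimate $k^2\|\Vw_l^3\|_{L^2(\Sigma)}+k\|\curl_y\Vw_l^3\|_{L^2(\Sigma)}\lesssim 1$; analogously $k^2\|\nabla_y w_l^2\|_{L^2(\Sigma^*)}\lesssim 1$ from \eqref{eq:cellproblem2}. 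For the sign of $\Im\mu_{\hom}$, the real correctors $w_l^2$ do not contribute to the imaginary part; testing \eqref{eq:cellproblem3} for $\xi$ with $\Vpsi_3=\Vw_\xi^3$ (producing conjugation) yields the key identity
\[
\Im(\mu_{\hom})\xi\cdot\xi^*=k^2\int_\Sigma\frac{\Im\varepsilon_1}{|\varepsilon_1|^2}\,|\curl_y\Vw_\xi^3|^2\,dy\ge 0,
\]
and vanishing here forces $\curl_y\Vw_\xi^3=0$; combined with $\Div_y\Vw_\xi^3=0$ (from testing \eqref{eq:cellproblem3} with gradients $\nabla_y\phi$, $\phi\in H^1_0(\Sigma)$) and the tangential boundary condition on the simply connected $\Sigma$, this gives $\Vw_\xi^3\equiv 0$; inserting back into \eqref{eq:cellproblem3} then yields $\xi=0$.

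The main obstacle is the last bullet. By the identity just derived, the $k^{-2}$ lower bound is equivalent to $\|\curl_y\Vw_\xi^3\|_{L^2(\Sigma)}^2\ge Ck^{-4}|\xi|^2$. For constant $\varepsilon_1$ the plan is to decompose $\Vw_\xi^3=-k^{-2}\xi+\Vr$, so that the residual $\Vr$ satisfies the homogeneous Maxwell equation $\varepsilon_1^{-1}\curl_y\curl_y\Vr-k^2\Vr=0$ in $\Sigma$ with the non-trivial prescribed tangential data $\Vn\times\Vr=k^{-2}(\Vn\times\xi)$ on $\partial\Sigma$ (to cancel the trace of the constant shift). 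Since $\curl_y\Vw_\xi^3=\curl_y\Vr$, the task reduces to a $k$-explicit lower bound on $\|\curl_y\Vr\|_{L^2(\Sigma)}$ in terms of boundary data of size $|\xi|/k^2$, obtained via a $k$-weighted trace estimate on $\partial\Sigma$. The hard part will be extracting the quantitative $k^{-2}$ bound while controlling boundary-layer effects; this is facilitated by the intrinsic damping from $\Im\varepsilon_1>0$, which excludes real eigenvalues and makes the residual problem uniformly well-posed, but the trace-based argument necessarily loses powers of $k$ compared with the heuristically sharp scaling $\|\curl_y\Vw_\xi^3\|\sim|\xi|/k$, which is why the author flags the resulting exponent $q=3$ as sub-optimal.
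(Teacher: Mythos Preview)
Your treatment of the first three bullets is correct and broadly matches the paper's approach; the paper is terser (citing the literature for $(\varepsilon^{-1})_{\hom}$ and Lemma~\ref{lem:stabcell} for the boundedness of $\mu_{\hom}$), while you work out the symmetrizations explicitly. Your key identity $\Im(\mu_{\hom})\xi\cdot\xi^*=k^2\int_\Sigma\Im((\varepsilon_1^{-1})^*)\,|\curl_y\Vw_\xi^3|^2$ is precisely what the paper derives, and your non-degeneracy argument for it is essentially the same.

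For the fourth bullet, however, your route is genuinely different from the paper's and has a gap. The paper does \emph{not} attempt a direct PDE argument; instead it invokes the equivalence with \cite{BBF15hommaxwell} established in Subsection~\ref{subsec:literaturecomp} and uses the explicit spectral representation
\[
(\mu_{\hom})_{j,l}=\delta_{jl}+\sum_n\frac{\varepsilon_1k^2}{\lambda_n-\varepsilon_1k^2}\Bigl(\int_Y\Vphi_n\cdot\Ve_j\Bigr)\Bigl(\int_Y\Vphi_n\cdot\Ve_l\Bigr),
\]
where $(\lambda_n,\Vphi_n)$ are eigenpairs of a vector Laplacian. From this formula the imaginary part is a positive series whose $n$-th coefficient equals $\Im(\varepsilon_1)\,k^2\lambda_n/|\lambda_n-\varepsilon_1k^2|^2$, and the $k^{-2}$ lower bound then follows by elementary bounds on the denominator, exactly as in the two-dimensional case \cite{OV16hmmhelmholtz}.

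Your proposed decomposition $\Vw_\xi^3=-k^{-2}\xi+\Vr$ is correct, but the step you call a ``$k$-weighted trace estimate'' does not close. A trace inequality only yields $\|\Vr\|_{L^2(\Sigma)}+\|\curl_y\Vr\|_{L^2(\Sigma)}\gtrsim k^{-2}|\xi|$, whereas you need the lower bound on $\|\curl_y\Vr\|_{L^2(\Sigma)}$ alone. The homogeneous equation $\varepsilon_1^{-1}\curl_y\curl_y\Vr=k^2\Vr$ does not furnish a $k$-uniform control of $\|\Vr\|_{L^2}$ by $\|\curl_y\Vr\|_{L^2}$ (if anything it scales the other way for large $k$), so it is unclear how the $L^2$-part of the trace bound is to be absorbed. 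The spectral representation is exactly the device that separates the modal contributions and makes the explicit $k$-dependence readable; without it, extracting the quantitative $k^{-2}$ bound from boundary data alone would require substantially more work than your sketch indicates.
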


\begin{proof}
The characterization of $(\varepsilon^{-1})_{\hom}$ is well-known and follows from the ellipticity of the corresponding cell problem, see \cite{HOV15maxwellHMM}.

The upper bound on $\mu_{\hom}$ easily follows from the stability bounds on $u_2$ and $\Vu_3$ given in the previous lemma.
For the positive-definiteness of $\Im(\mu_{\hom})$ we deduce from the cell problems that
\[\Im(\mu_{\hom}\xi\cdot \xi^*)=k^2\int_\Sigma \Im((\varepsilon^{-1}_1)^*)|\curl_y \Vw^3_\xi|^2\, dy,\]
where $\Vw_\xi$ is the solution to cell problem \eqref{eq:cellproblem3} with right-hand side $\xi$.
Note that by assumption it holds $\Im((\varepsilon^{-1}_1)^*)>0$.
$\curl_y \Vw^3_\xi=0$ is only possible if $\xi=0$ due to the cell problem and its boundary condition.

For the case of constant $\varepsilon_1$, we use the equivalence to the effective $\mu$ given in \cite{BBF15hommaxwell} (cf.\ Subsection \ref{subsec:literaturecomp}).
Then, we can use the following representation, which is equation (6.16) of \cite{BBF15hommaxwell},
\[(\mu_{\hom})_{j,l}=\mbox{Id}_{jl}+\sum_n\frac{\varepsilon_1k^2}{\lambda_n-\varepsilon_1k^2}\Bigl(\int_Y\Vphi_n\cdot \Ve_j\Bigr)\Bigl(\int_y \Vphi_n\cdot \Ve_l\Bigr).\]
Here, $(\Vphi, \lambda_n)$ are eigenfunctions and eigenvalues of a vector-Laplacian on $Y$.
Now, the lower bound can be shown as in the two-dimensional case in \cite{OV16hmmhelmholtz}.
\end{proof}

The regularity results for the cell problems can be deduced from well-known regularity theory, see \cite{Hipt02FEem} for details.

\begin{proposition}
\label{prop:regcellandcavity}
There are $1/2<t_j\leq 1, j=1,2,3$ such that $\Vu_1\in L^2(\Omega; \VH^{t_1}(\curl, \Sigma^*))$, $u_2\in L^2(\Omega; H^{1+t_2}(\Sigma^*))$ and $\Vu_3\in L^2(\Omega; \VH^{t_3}(\Sigma))$ with the regularity estimates
\begin{align*}
\|\curl_y\Vu_1\|_{L^2(\Omega; \VH^{t_1}(\Sigma^*))}&\lesssim \|\Vu\|_{\curl; k; G}\\
k\|u_2\|_{L^2(\Omega; H^{1+t_2}(\Sigma^*))}&\lesssim  \|\Vu\|_{\mathrm{imp};k; G}\\
\|\curl\Vu\|_{L^2(\Omega; \VH^{t_3}(\curl, \Sigma))}+k\|\Vu_3\|_{L^2(\Omega; \VH^{t_3}(\Sigma))}&\lesssim (1+k)\|\Vu\|_{\mathrm{imp}; k; G}.
\end{align*}
We have $t_j=1$ for all $j$ if $\Sigma$ is of class $C^2$.
\end{proposition}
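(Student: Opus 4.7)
The plan is to treat each of the three cell problems separately, prove the desired fractional regularity for the cell solutions $\Vw_j^1, w_j^2, \Vw_j^3$ (with constants independent of $k$ after appropriate rescaling), and then lift these to the required bounds on $\Vu_1, u_2, \Vu_3$ using the explicit representations in \eqref{eq:correctors}. Throughout, I rely on the classical Maxwell/elliptic shift theorems on Lipschitz or $C^2$ domains quoted from \cite{Hipt02FEem}.

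For cell problem \eqref{eq:cellproblem1}, I observe that $\Vw_l^1$ is (up to gauge) the solution of a curl-curl periodic problem on $\Sigma^*$ with constant right-hand side $\Ve_l$ and bounded coefficient $\varepsilon_0^{-1}$. Combined with the divergence-free/gauge constraint implicit in the quotient space $\widetilde{\VH}_\sharp(\curl,\Sigma^*)$, this falls exactly into the regularity framework of \cite{Hipt02FEem}: on a Lipschitz domain $\Sigma$ there is a shift exponent $t_1>1/2$ for which $\curl_y \Vw_l^1\in \VH^{t_1}(\Sigma^*)$, and $t_1=1$ if $\partial \Sigma$ is $C^2$. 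Since $\Vu_1=\chi_\Omega\chi_{\Sigma^*}\sum_j(\curl\Vu)_j \Vw_j^1$, the Bochner $L^2(\Omega;\VH^{t_1}(\Sigma^*))$ norm factorizes and yields the first estimate with $k$-independent constants.

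For cell problem \eqref{eq:cellproblem2}, I first rescale: set $\tilde w_l^2 := k^2 w_l^2$ so that $\tilde w_l^2\in H^1_{\sharp,0}(\Sigma^*)$ solves the $k$-independent Poisson-type equation $\int_{\Sigma^*} \nabla_y \tilde w_l^2\cdot \nabla_y\psi^*\, dy=-\int_{\Sigma^*}\Ve_l\cdot\nabla_y\psi^*\, dy$. Standard scalar elliptic regularity then produces $\tilde w_l^2\in H^{1+t_2}(\Sigma^*)$ with a $k$-independent bound. Writing $u_2=\chi_\Omega\chi_{\Sigma^*}\sum_j \Vu_j \tilde w_j^2$ and using $k\|\Vu\|_{L^2(\Omega)}\leq\|\Vu\|_{\mathrm{imp};k;G}$ delivers the second estimate.

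For cell problem \eqref{eq:cellproblem3}, the situation is slightly more delicate because the Maxwell cavity form is indefinite; however, well-posedness and the $L^2$ bound $\|\curl_y\Vw_l^3\|_{L^2(\Sigma)}+k\|\Vw_l^3\|_{L^2(\Sigma)}\lesssim 1$ follow from the coercivity coming from $\Im((\varepsilon_1^{-1})^*)>0$ together with Lemma \ref{lem:stabcell}. I then apply the Maxwell shift result of \cite{Hipt02FEem}: once $\Vw_l^3$ and $\curl_y\Vw_l^3$ are controlled in $L^2(\Sigma)$ and the data $\Ve_l+k^2\Vw_l^3$ is in $L^2(\Sigma)$, a Helmholtz decomposition $\Vw_l^3=\Vz+\nabla_y\theta$ with $\Vz$ divergence-free and $\theta\in H^1_0(\Sigma)$ reduces both components to scalar elliptic problems with $L^2$ data, yielding $\Vw_l^3\in\VH^{t_3}(\Sigma)$ and $\curl_y\Vw_l^3\in\VH^{t_3}(\Sigma)$ (with $t_3=1$ if $\partial\Sigma\in C^2$). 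Transferring via $\Vu_3=\chi_\Omega\chi_\Sigma\sum_j k^2 \Vu_j\Vw_j^3$ and using Lemma \ref{lem:stabcell} to absorb the factor $k^2$ into $\|\Vu\|_{\mathrm{imp};k;G}$ gives the claimed $(1+k)$-factor in the third estimate.

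The main obstacle is the third step: the indefinite Maxwell cell problem does not admit a direct Lax-Milgram-based regularity proof, and one has to be careful that the $k$-dependence of the shift estimate does not exceed the one already available from Lemma \ref{lem:stabcell}. The Helmholtz-type splitting is essential here, since it decouples the curl and gradient parts and allows purely scalar elliptic regularity theory to be invoked, leaving $k$-dependence to enter only through the $L^2$-norms controlled by Lemma \ref{lem:stabcell}.
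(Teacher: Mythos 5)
Your proposal is correct and follows essentially the paper's own route: the paper offers no written proof beyond the remark that the estimates follow from well-known regularity theory in \cite{Hipt02FEem}, and your argument---the $k^2$-rescaling of the Poisson cell problem, the div--curl (Helmholtz-splitting) embeddings for the two Maxwell cell problems, and the transfer through the corrector formulas \eqref{eq:correctors}---is precisely that standard theory made explicit. One bookkeeping caution: your intermediate bound $\|\curl_y\Vw_l^3\|_{L^2(\Sigma)}+k\|\Vw_l^3\|_{L^2(\Sigma)}\lesssim 1$ is lossy (used naively after multiplying by the $k^2$ in \eqref{eq:correctors} it yields a factor $k^2$ rather than $(1+k)$), but your appeal to Lemma \ref{lem:stabcell} at the transfer step---equivalently, the sharper decay $\|\Vw_l^3\|_{L^2(\Sigma)}\lesssim k^{-2}$ and $\|\curl_y\Vw_l^3\|_{L^2(\Sigma)}\lesssim k^{-1}$ obtained by testing the coercive cell problem \eqref{eq:cellproblem3} with $\Vw_l^3$ itself---is exactly what restores the stated power of $k$.
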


The higher regularity for the effective scattering equation is more difficult to derive due to the impedance boundary condition.
As the effective parameters $(\varepsilon^{-1})_{\hom}$ and $\mu_{\hom}$ are piecewise constant, we can only expect piecewise higher regularity.
Therefore, we introduce $\VH^s_{pw}(\curl, G)=\VH(\curl, G)\cap \VH^s(\curl, \Omega)\cap \VH^s(\curl, G\setminus \overline{\Omega})$ with the corresponding norm.
For the definition of the trace spaces, we use the notation of \cite{GM12maxwellimpedance} and refer to \cite{BC01tracescurldiv, BCS02tracescurl, Moiola} for details on the spaces.

\begin{proposition}
\label{prop:regscattering}
Let $\Vf\in \VH(\Div, G)$ with $\Div \Vf=0$. 
Let $\Vu$ be the solution to \eqref{eq:effectiveeq} with additional volume term $\Vf$ on the right-hand side.
\begin{itemize}
\item If $\Omega$ and $G$ have $C^2$-boundary and $\Vg\in \VH^{1/2}_T(\partial G)$, then $\Vu\in \VH^1_{pw}(\curl, G)$.
\item If $G$ is convex and $\Vg\in \VH^{s_g}_T(\partial G)$ for $0<s_g<1/2$, there is $1/2<s\leq 1/2+s_g$, only depending on the shape of $\Omega$ and $G$, such that $\Vu\in \VH^{s}_{pw}(\curl, G)$.
\end{itemize}
In both cases, we have the regularity estimate
\[\|\curl\Vu\|_{\VH^s_{pw}(G)}+k\|\Vu\|_{\VH^s_{pw}(G)}\leq C\bigl((1+k)\|\Vu\|_{\curl; k; G}+\|\Vf\|_{L^2(G)}+\|\Vg\|_{\VH^{s_g}(\partial G)}\bigr).\]

Moreover, if $\Vu\in \VH^s_{pw}(G)$ with $1/2<s\leq 1$, we also have $\Vu\in \VH^{s-1/2}_{\parallel}(\partial G)\cap\VH(\curl_{\partial G})$ with
\begin{equation}
\label{eq:regbdryscatter}
\begin{split}
k^{1/2}(\|\Vu\|_{\VH^{s-1/2}_\parallel(\partial G)}+\|\curl_{\partial G}(\Vu_T)\|_{L^2(\partial G)})&\leq Ck^{1/2} \|\Vu\|_{\VH^s_{pw}(G)}.
\end{split}
\end{equation}
\end{proposition}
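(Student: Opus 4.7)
The plan is to reduce the statement to known regularity theory for the Maxwell transmission problem with piecewise constant coefficients plus impedance boundary condition, and then to track the $k$-dependence of the constants.

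First, I would recast the weak equation in strong form. Testing \eqref{eq:effectiveeq} (with the additional volume term $\Vf$ on the right-hand side) against $\Vpsi=\nabla\phi$ with $\phi\in H^1_0(G)$ yields $-k^2\int_G \mu_{\hom}\Vu\cdot\nabla\phi^*\,dx=\int_G \Vf\cdot\nabla\phi^*\,dx$, which vanishes since $\Div\Vf=0$ and $\phi$ has zero trace. Hence $\Div(\mu_{\hom}\Vu)=0$ in $G$. Because $\mu_{\hom}$ is piecewise constant (by Proposition~\ref{prop:effective}), this gives $\Div\Vu=0$ in $G\setminus\overline\Omega$ and $\Div(\mu_{\hom}\Vu)=0$ in $\Omega$, together with continuity of the normal trace $(\mu_{\hom}\Vu)\cdot\Vn$ across $\partial\Omega$. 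Similarly, testing with arbitrary $\Vpsi\in \VH_{\imp}(G)$ gives the strong equation $\curl((\varepsilon^{-1})_{\hom}\curl\Vu)-k^2\mu_{\hom}\Vu=\Vf$ in $G$, the natural transmission condition $[\Vn\times(\varepsilon^{-1})_{\hom}\curl\Vu]=0$ on $\partial\Omega$ (with $[\Vn\times\Vu]=0$ as well, both of which are built into $\VH(\curl,G)$), and the impedance condition \eqref{eq:impedancebdry} on $\partial G$.

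Next, I would apply known regularity results for this transmission problem in each subdomain. On $\Omega$ and on $G\setminus\overline\Omega$ separately, the coefficients are constant, $(\varepsilon^{-1})_{\hom}$ is real symmetric positive definite, and we have the curl-equation together with a divergence constraint. Combining the curl-equation and the divergence constraint via the identity $\curl\curl=-\Delta+\nabla\Div$ gives an elliptic second-order system for $\Vu$, and the interface/boundary conditions above fit into the framework of \cite{Hipt02FEem,BC01tracescurldiv,BCS02tracescurl,GM12maxwellimpedance}. For $C^2$ boundaries of $\Omega$ and $G$ and $\Vg\in\VH^{1/2}_T(\partial G)$ one obtains $\VH^1$ regularity of $\Vu$ and of $\curl\Vu$ in each subdomain, i.e.\ $\Vu\in \VH^1_{pw}(\curl,G)$. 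For convex $G$ and $\Vg\in \VH^{s_g}_T(\partial G)$ with $0<s_g<1/2$ the standard trade-off between boundary-data regularity and interior regularity produces some $s\in(1/2,1/2+s_g]$ (with the threshold $1/2$ coming from the shift theorems in \cite{BCS02tracescurl,GM12maxwellimpedance}) such that $\Vu\in \VH^s_{pw}(\curl,G)$.

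The regularity estimate then follows by feeding the data into the shift estimates: the right-hand side contributes $\|\Vf\|_{L^2(G)}$, the impedance datum $\Vg$ contributes $\|\Vg\|_{\VH^{s_g}(\partial G)}$, and the ``lower order'' term $k^2\mu_{\hom}\Vu$ contributes a factor $k^2\|\Vu\|_{L^2(G)}\lesssim k\|\Vu\|_{\curl;k;G}$, which after combining with the $L^2$-bound on $\curl\Vu$ explains the factor $(1+k)\|\Vu\|_{\curl;k;G}$. For the trace estimate \eqref{eq:regbdryscatter}, I would use the tangential trace theorems from \cite{BC01tracescurldiv,BCS02tracescurl} (as summarized in \cite{GM12maxwellimpedance,Moiola}): $\Vu\in\VH^s_{pw}(G)$ with $s>1/2$ has a well-defined tangential trace in $\VH^{s-1/2}_\parallel(\partial G)$, and the impedance condition \eqref{eq:impedancebdry} then expresses $\curl_{\partial G}(\Vu_T)=-(\curl\Vu)\cdot\Vn$ as a trace of $\curl\Vu\in \VH^s_{pw}(G)$, so both quantities are controlled by $\|\Vu\|_{\VH^s_{pw}(G)}$; the $k^{1/2}$ weight is added on both sides.

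The main obstacle is not the interior regularity (which is classical) but the careful bookkeeping at the interface $\partial\Omega$ and at $\partial G$: across $\partial\Omega$ the coefficients jump, so one must rely on the transmission-problem results cited above rather than a direct Calderón--Zygmund argument, and near $\partial G$ the non-standard impedance boundary condition requires the trace-space machinery of \cite{BCS02tracescurl,GM12maxwellimpedance}. A secondary technicality is making explicit the $k$-dependence of the constants, which requires inspecting how the lower-order term $k^2\mu_{\hom}\Vu$ is absorbed into the right-hand side of the shift estimate and then re-expressing the resulting $k$-powers in terms of the weighted norm $\|\cdot\|_{\curl;k;G}$.
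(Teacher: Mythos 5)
Your proposal is correct and follows essentially the same route as the paper, which simply cites the constant-coefficient impedance case of \cite{Moiola} together with the transmission-regularity literature \cite{BGL13regularitymaxwell, CDN99maxwellinterface} and trace-operator continuity \cite{BC01tracescurldiv, BCS02tracescurl, BH03bdryfem}; your write-up merely makes explicit the steps implicit in those citations (divergence constraint $\Div(\mu_{\hom}\Vu)=0$, strong form with transmission conditions across $\partial\Omega$, shift estimates per subdomain, and $k$-bookkeeping via $k^2\|\Vu\|_{L^2}\lesssim k\|\Vu\|_{\curl;k;G}$). One small slip: the identity $\curl_{\partial G}(\Vu_T)=\Vn\cdot\curl\Vu$ is a general trace identity, not a consequence of the impedance condition \eqref{eq:impedancebdry} (which constrains the \emph{tangential} part $\curl\Vu\times\Vn$), but this does not affect your argument since you bound it by the trace of $\curl\Vu\in\VH^s_{pw}(G)$ exactly as the paper intends.
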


\begin{proof}
The proof can be easily adopted from the case of scalar-valued constant material parameters in \cite{Moiola}.
We refer to \cite{BGL13regularitymaxwell, CDN99maxwellinterface} for  other results on higher regularity of curl-curl-problems with piece-wise constant coefficients.
The regularity on the boundary directly follows from the continuity of trace operators, see \cite{BC01tracescurldiv, BCS02tracescurl, BH03bdryfem}.
\end{proof}

\begin{remark}
\label{rem:reghelmholtzdecomp}
The arguments from Propositions \ref{prop:regcellandcavity} and \ref{prop:regscattering} can also be employed to show higher regularity for the Helmholtz  decomposition \eqref{eq:decomp}: We have $\Vz\in \VH^s_{pw}(G)$ and $\Vz_3\in L^2(\Omega, \VH^{t_3}(\curl, \Sigma))$.
\end{remark}

In order to have a full regularity estimate only in terms of the data, we need a stability result, i.e.\ the dependence of the solution in its natural norm (here $\|\cdot\|_{\imp; k; G}$) on the data.
Fredholm theory gives us such a stability result, but without explicit  dependence of the constant on $k$.
We now assume an explicit, polynomial stability constant.

\begin{assumption}
\label{ass:polstable}
We assume that the solution to the homogenized macroscopic equation  \eqref{eq:effectiveeq} with additional volume term $\Vf\in \VH(\Div, G)$ with $\Div\Vf=0$ is polynomially stable, i.e.\ the unique solution fulfills for some $q\in \nz_0$ and an $k$-independent constant $C_{\stab}$
\begin{equation}
\label{eq:polstable}
\|\Vu\|_{\imp; k, G}\leq C_{\stab}\, k^q(\|\Vf\|_{L^2(G)}+\|\Vg\|_{L^2(\partial G)}).
\end{equation}
\end{assumption}

The only polynomial stability results for time-harmonic Maxwell equations available in the literature so far consider the case of constant coefficients, see \cite{FW14dgmaxwell, HMP11stabilityMaxwell, Moiola}.
The setting of the effective homogenized equation \eqref{eq:effectiveeq} exhibits new challenges for the stability analysis:  discontinuous, namely piece-wise constant,  and matrix-valued coefficients and a partly complex parameter $\mu$.
In order to cope with these challenges, we first generalize the known results to the class of real- and matrix-valued, Lipschitz continuous coefficients.
More precisely, we have the following proposition, which is proved in Section \ref{subsec:stabilproof}.

\begin{proposition}
\label{prop:stabilitylipschitz}
Assume that there is $\gamma>0$ such that
\begin{align}
\label{eq:geometry}
x\cdot \Vn_G\geq \gamma \text{ on }\partial G &&x\cdot \Vn_\Omega\geq 0\text{ on }\partial \Omega,
\end{align}
where $\Vn$ denotes the outer normal of the domain specified in the subscript.
Let $\Vv\in\VH_{\mathrm{imp}}(G)$ be the unique solution to 
\begin{equation}
\label{eq:scatteringweakstabil}
\int_G \!A\curl\Vv\cdot \curl\Vpsi^*-k^2B\Vv\cdot \Vpsi^*\, dx -ik\int_{\partial G}\!\!\beta\Vv_T\cdot \Vpsi^*_T\, d\sigma = \int_G\!\Vf\cdot \Vpsi^*\, dx+\int_{\partial G}\!\Vg\cdot \Vpsi^*_T\, d\sigma \quad \forall \Vpsi\in \VH_{\imp}(G)
\end{equation}
with $\Vf\in \VH(\Div, G)$ with $\Div\Vf=0$, $\Vg\in L^2_T(\partial G)$
and $A, B\in W^{1, \infty}(G)$ fulfilling the assumptions
\begin{itemize}
\item $A, B$ are real-valued symmetric positive-definite
\item $A=\alpha(x)\Id$, $B=\beta(x)\Id$ in a neighborhood of the boundary $\partial G$
\item the matrix $DA\cdot \Vx$ is negative semi-definite and $DB\cdot \Vx$ is positive semi-definite, where $(DA\cdot \Vx)_{jl}:=\sum_n\partial_nA_{j,l}x_l$.
\end{itemize}
There exists a constant $C>0$, depending only on $G$, $k_0$, and the upper and lower bounds (eigenvalues) of $A$ and $B$, but not on $k$, the data $\Vf$ and $\Vg$, or any derivative information of $A$ and $B$, such that
\begin{equation}
\label{eq:stability}
\|\Vv\|_{\mathrm{imp}, k, G}\leq C (\|\Vf\|_{L^2(G)}+\|\Vg\|_{L^2(\partial G)}).
\end{equation}
\end{proposition}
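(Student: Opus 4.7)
The strategy is a Morawetz--Rellich multiplier argument, extending the approach used for constant-coefficient Maxwell equations (\cite{HMP11stabilityMaxwell, Moiola}) to the matrix-valued Lipschitz setting permitted by the hypotheses on $A$ and $B$. The sign assumptions on $DA\cdot \Vx$ and $DB\cdot \Vx$ are precisely what is needed to generate coercive volume terms from the Rellich identity, while the geometric conditions \eqref{eq:geometry} give the correct sign on the boundary of $G$.

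\textbf{Step 1 (energy identity).} First I would test \eqref{eq:scatteringweakstabil} with $\Vpsi = \Vv$. The imaginary part yields
\begin{equation*}
k\int_{\partial G}\beta |\Vv_T|^2\, d\sigma = -\Im\!\int_G\Vf\cdot\Vv^*\, dx - \Im\!\int_{\partial G}\Vg\cdot \Vv_T^*\, d\sigma,
\end{equation*}
providing the first control $k\|\Vv_T\|_{L^2(\partial G)}^2\lesssim \|\Vf\|_{L^2(G)}\|\Vv\|_{L^2(G)}+\|\Vg\|_{L^2(\partial G)}\|\Vv_T\|_{L^2(\partial G)}$. The real part relates $\|A^{1/2}\curl \Vv\|^2$ to $k^2\|B^{1/2}\Vv\|^2$ modulo data.

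\textbf{Step 2 (Rellich identity).} The main ingredient is the multiplier $\Vpsi = (\Vx\cdot\nabla)\Vv$, made rigorous via a mollification/density argument based on the interior elliptic regularity shift given by $\Div \Vf = 0$ and the impedance boundary condition. Integration by parts produces a Rellich-type identity
\begin{equation*}
\int_G \bigl[-(DA\cdot \Vx)\curl\Vv\cdot\curl\Vv^* + k^2(DB\cdot \Vx)\Vv\cdot\Vv^*\bigr]\, dx + \mathcal{R}_{\partial G} = 2\Re\!\int_G \Vf\cdot ((\Vx\cdot\nabla)\Vv)^*\, dx,
\end{equation*}
where $\mathcal{R}_{\partial G}$ collects the boundary contribution on $\partial G$. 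No singular interface term appears across $\partial \Omega$ because $A, B\in W^{1,\infty}(G)$, while the condition $\Vx\cdot\Vn_\Omega \geq 0$ ensures the almost-everywhere sign information on $DA\cdot \Vx$ and $DB\cdot \Vx$ remains compatible with the star-shaped geometry. By hypothesis, the two volume integrands on the left are both nonnegative, so they dominate $\|\curl\Vv\|_{L^2(G)}^2$ and $k^2\|\Vv\|_{L^2(G)}^2$ only up to the boundary correction.

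\textbf{Step 3 (boundary terms and closing).} Since $A=\alpha\Id$ and $B=\beta\Id$ near $\partial G$, $\mathcal{R}_{\partial G}$ reduces to the standard Maxwell Rellich boundary expression, with the scalar weight $\Vx\cdot\Vn_G\geq\gamma$ supplying positivity on the geometric term and the impedance relation $\curl\Vv\times\Vn = ik\beta\Vv_T+\Vg$ allowing tangential curl-traces to be replaced by $\Vv_T$ and $\Vg$, both bounded through Step 1 and the data. The right-hand side $2\Re\!\int_G\Vf\cdot((\Vx\cdot\nabla)\Vv)^*$ is controlled by integration by parts using $\Div\Vf=0$, giving $\lesssim \diam(G)\|\Vf\|_{L^2(G)}\|\Vv\|_{L^2(G)}$ plus lower-order boundary contributions. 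Combining the two identities, using Young's inequality to absorb the cross terms containing $\|\Vv\|_{L^2(G)}$, and invoking $k\geq k_0$ produces \eqref{eq:stability}. The constant depends only on $G$, $k_0$, and the spectral bounds of $A, B$, as claimed, because the gradient information on $A, B$ only enters through the \emph{sign} assumptions, never through norms.

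\textbf{Main obstacle.} The delicate point is Step 3: identifying the vectorial Rellich boundary integrand using the surface calculus on $\partial G$ (tangential gradients and $\curl_{\partial G}$) and then showing that the star-shapedness and the impedance condition jointly generate enough positivity to close the estimate without losing powers of $k$. A subordinate technical hurdle is justifying the Rellich multiplier when $\Vv\in\VH_{\imp}(G)$ has no a priori global derivative, which will require approximation by smoother fields together with an interior regularity bootstrap obtained from $\Div\Vf=0$.
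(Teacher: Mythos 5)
Your high-level strategy (Rellich--Morawetz multipliers, the imaginary-part energy identity for $k\|\Vv_T\|^2_{L^2(\partial G)}$, the impedance substitution $(\curl\Vv)_T\mapsto ik\beta\Vv_T+\Vg$ near the star-shaped boundary) is the same as the paper's, and your Step~1 coincides with the first step of its proof. However, Step~2 contains a genuine error that the rest of the argument cannot survive: the Rellich identity you write down is missing its principal volume terms. Computing the zeroth-order part of the multiplier explicitly, one has pointwise
\begin{equation*}
2\Re\bigl(B\Vv\cdot((\Vx\cdot\nabla)\Vv)^*\bigr)=\Div\bigl((B\Vv\cdot\Vv^*)\Vx\bigr)-3\,B\Vv\cdot\Vv^*-(DB\cdot\Vx)\Vv\cdot\Vv^*,
\end{equation*}
so testing with $(\Vx\cdot\nabla)\Vv$ generates the dimensional term $+3k^2\int_G B\Vv\cdot\Vv^*$ (and an analogous term $c\int_G A\curl\Vv\cdot\curl\Vv^*$ from the curl--curl part), neither of which appears in your identity. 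These dimensional terms, produced by $\Div\Vx=3$, are the actual source of coercivity. The terms $-(DA\cdot\Vx)$ and $(DB\cdot\Vx)$ are only \emph{semi}-definite and vanish identically for constant coefficients --- precisely the case of \cite{HMP11stabilityMaxwell, Moiola} that you are extending --- so your assertion that they ``dominate $\|\curl\Vv\|^2_{L^2(G)}$ and $k^2\|\Vv\|^2_{L^2(G)}$'' is backwards: with your identity as written, constant $A,B$ would yield zero volume positivity. The sign hypotheses on $DA\cdot\Vx$ and $DB\cdot\Vx$ serve only to let one \emph{discard} the coefficient-derivative remainders without spoiling the sign. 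The paper implements this correctly via cross-product (Maxwell--Morawetz) multipliers: pointwise, $2\Re\bigl(\curl(A\Vxi)\cdot(\Vxi^*\times\Vx)\bigr)$ equals divergence terms plus $A\Vxi\cdot\Vxi^*-(DA\cdot\Vx)\Vxi\cdot\Vxi^*$, which after integration yields the two inequalities \eqref{eq:curlRellich3} and \eqref{eq:idRellich3}; applying these with $\Vxi=\curl\Vv$ and $\Vxi=\Vv$ puts $\|\Vv\|^2_{\curl;k;G}$ on the left at once, with the coefficient $1$ coming from the dimensional bookkeeping, not from $DA\cdot\Vx$ or $DB\cdot\Vx$.

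Two further steps fail as written. First, your treatment of the data term: integrating $2\Re\int_G\Vf\cdot((\Vx\cdot\nabla)\Vv)^*$ by parts produces $-\int_G((\Vx\cdot\nabla)\Vf)\cdot\Vv^*-3\int_G\Vf\cdot\Vv^*$ plus a boundary term $\int_{\partial G}(\Vx\cdot\Vn)\,\Vf\cdot\Vv^*$; the first involves full first derivatives of $\Vf$, which $\Div\Vf=0$ does not control, and the boundary term involves the normal trace of $\Vv$, which is not available for $\Vv\in\VH_{\imp}(G)$. In the cross-product formulation this issue never arises: after inserting the strong equation the data term is $\int_G\Vf\cdot(\curl\Vv^*\times\Vx)$, bounded directly by Cauchy--Schwarz against $\|\curl\Vv\|_{L^2(G)}$. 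Second, the true role of $\Div\Vf=0$ --- absent from your proposal --- is to give $\Div(B\Vv)=0$, obtained by testing \eqref{eq:scatteringweakstabil} with $\nabla\phi$ for $\phi\in H^1(G)$ constant on $\partial G$; this kills the divergence term in \eqref{eq:idRellich3}, while $\Div\curl\Vv=0$ kills it in \eqref{eq:curlRellich3}. Without these observations the multiplier identities leave uncontrolled terms. Finally, regarding your ``main obstacle'': the paper needs no interior regularity bootstrap for $\Vv$, because the Rellich inequalities are proved for fields $\Vxi$ with exactly the regularity at hand ($\Vxi\in\VH(\Div,G)$ with $\curl(A\Vxi)\in L^2(G)$, respectively $\Vxi\in\VH_{\imp}(G)$ with $\Div(B\Vxi)\in L^2(G)$), and the smoothing/density argument is carried out only at the level of the pointwise identity.
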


The geometrical assumption \eqref{eq:geometry} is the common assumption for scattering problems, see \cite{Moiola, HMP11stabilityMaxwell}. 
It can, for example, be fulfilled if $\Omega$ is convex (and w.l.o.g.\ $0\in \Omega$) and $G$ is chosen appropriately.
Note that the conditions on the derivatives of the coefficients are similar to those for the Helmholtz equation, see \cite{OV16hmmhelmholtz} and the remarks therein.
We emphasize that we obtain the same stability result, i.e.\ $q=0$, as for Maxwell's equations with constant coefficients, see \cite{HMP11stabilityMaxwell, Moiola}.
This generalization to a wider class of coefficients maybe of interest on its own.

We can now prove Assumption \ref{ass:polstable} with $q=3$ for the setting of the homogenized equation \eqref{eq:effectiveeq}.
More precisely, we have the following theorem, which is proved in Section \ref{subsec:stabilproof}.

\begin{theorem}
\label{thm:stabilityeff}
Let $G$ and $\Omega$ fulfill \eqref{eq:geometry}.
Furthermore assume that $(\varepsilon^{-1})_{\hom}|_{G\setminus \overline{\Omega}}-(\varepsilon^{-1})_{\hom}|_\Omega$ is negative semi-definite. 
We assume that $\Im(\mu_{\hom})\geq k^{-2}$, see Proposition \ref{prop:effective} for constant $\varepsilon_1$.
Let $\Vu$ be the solution to \eqref{eq:effectiveeq} with additional volume term $\int_G \Vf\cdot\Vpsi^*\, dx$ on the right hand-side for $\Vf\in \VH(\Div, G)$ with $\Div\Vf=0$. 
Then there is $C_{\stab,0}$ only depending on the geometry, the parameters, and $k_0$, such that $\Vu$ satisfies the stability estimate
\[ \|\Vu\|_{\mathrm{imp},k,G}\leq C_{\stab, 0}(k^3\|\Vf\|_{L^2(\Omega)}+k^2\|\Vf\|_{L^2(G\setminus \overline{\Omega})}+k^{3/2}\|\Vg\|_{L^2(\partial G)}+k^{-1}\|\Vg\|_{\VH^{s_g}(\partial G)}).\]
\end{theorem}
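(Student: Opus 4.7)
The plan is to combine an absorption estimate extracted from the imaginary part of $\mu_{\hom}$ with a Rellich-Morawetz identity adapted to the piecewise-constant effective coefficients of \eqref{eq:effectiveeq}. This follows the overall strategy of Proposition \ref{prop:stabilitylipschitz} but with two essential additions: a careful treatment of the interface $\partial\Omega$ (since $(\varepsilon^{-1})_{\hom}$ and $\mu_{\hom}$ are only piecewise constant, hence not Lipschitz on $G$) and a separate analysis of the complex, non-Hermitian matrix $\mu_{\hom}$.

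As a first step I would test \eqref{eq:effectiveeq} (augmented with the volume source $\Vf$) against $\Vpsi=\Vu$ and take imaginary parts. Since $(\varepsilon^{-1})_{\hom}$ is real, $\mu_{\hom}$ is complex symmetric, and $\Im(\mu_{\hom})\geq k^{-2}\Id$ in $\Omega$ (and vanishes outside by Proposition \ref{prop:effective}), a direct computation yields
\[
\|\Vu\|_{L^2(\Omega)}^2 + k\|\Vu_T\|^2_{L^2(\partial G)}\lesssim \|\Vf\|_{L^2(G)}\|\Vu\|_{L^2(G)} + \|\Vg\|_{L^2(\partial G)}\|\Vu_T\|_{L^2(\partial G)}.
\]
This cheap first bound controls $\Vu$ inside the scatterer and its tangential trace on $\partial G$, but only at the cost of one extra power of $k$ relative to the full impedance norm, which is what will produce the $k^3$ weight on $\|\Vf\|_{L^2(\Omega)}$ later on.

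In the second step I would apply a Rellich multiplier of the form $\Vpsi=(\Vx\cdot\nabla)\Vu$ (or its curl-adapted vector variant, as in \cite{HMP11stabilityMaxwell,Moiola}) and take real parts. Because the effective coefficients are constant on each of $\Omega$ and $G\setminus\overline\Omega$, the interior contributions corresponding to $DA\cdot\Vx$ and $DB\cdot\Vx$ in Proposition \ref{prop:stabilitylipschitz} drop out, leaving positive volume terms of the form $\|\curl\Vu\|^2_{L^2(G)}+k^2\Re(\mu_{\hom})\Vu\cdot\Vu^*$, together with boundary terms on $\partial G$ and new interface terms on $\partial\Omega$. The boundary contributions on $\partial G$ are treated exactly as in Proposition \ref{prop:stabilitylipschitz}, using the impedance condition, the geometric hypothesis $\Vx\cdot\Vn_G\geq\gamma$, and the trace bound from Step 1; the $k^{-1}\|\Vg\|_{\VH^{s_g}(\partial G)}$-term will appear here via a continuous trace inequality applied to the Rellich boundary integral.

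The main obstacle, and technical heart of the proof, is the handling of the $\partial\Omega$-interface terms. After rewriting them in terms of the continuous tangential traces $(\varepsilon^{-1})_{\hom}\curl\Vu\times\Vn$ and $\Vu_T$ and the (discontinuous) normal components, these reduce to quadratic forms weighted by $\Vx\cdot\Vn_\Omega\geq 0$ and by the jumps of $(\varepsilon^{-1})_{\hom}$ and $\mu_{\hom}$ across $\partial\Omega$. The hypothesis that $(\varepsilon^{-1})_{\hom}|_{G\setminus\overline\Omega}-(\varepsilon^{-1})_{\hom}|_\Omega$ is negative semi-definite is precisely what makes the $(\varepsilon^{-1})_{\hom}$-jump contribute with the favorable sign; the real part of the $\mu_{\hom}$-jump can be treated in the same way, while the jump of $\Im(\mu_{\hom})$ yields a damping term already under control by Step 1. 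Combining the Rellich estimate with the $L^2(\Omega)$- and $L^2(\partial G)$-bounds of Step 1, absorbing the arising $\|\Vu\|_{L^2(G)}$-contributions by Young's inequality, and carefully bookkeeping the powers of $k$ then produces the four-term estimate, with the distinct weights $k^3$ on $\|\Vf\|_{L^2(\Omega)}$ and $k^2$ on $\|\Vf\|_{L^2(G\setminus\overline\Omega)}$ stemming from the weaker (order $k^{-2}$) absorption available inside the scatterer as compared to the full Rellich positivity in the exterior.
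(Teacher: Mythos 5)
Your Step 1 (testing with $\Vpsi=\Vu$ and taking imaginary parts) is exactly the damping estimate the paper uses to close its argument, and your bookkeeping of the $k$-powers via $c_0=k^{-2}$ is right. The genuine gap is in Step 2: the Rellich--Morawetz identities \eqref{eq:curlRellich3}--\eqref{eq:idRellich3} are only valid for \emph{real} symmetric $B$. For the complex symmetric $\mu_{\hom}$ the pointwise rearrangement
\begin{equation*}
2\Re\bigl(B\Vu\cdot(\Vx\cdot\nabla)\Vu^*\bigr)=\Div\bigl((B\Vu\cdot\Vu^*)\Vx\bigr)-3B\Vu\cdot\Vu^*-(DB\cdot\Vx)\Vu\cdot\Vu^*
\end{equation*}
breaks down because $B\Vu\cdot\Vu^*$ is no longer real; the leftover terms pair $\Im(\mu_{\hom})$ with \emph{first derivatives} of $\Vu$, so they are not ``damping terms already under control by Step 1,'' which only controls $\|\Vu\|_{L^2(\Omega)}$. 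Worse, your claimed positive volume contribution $k^2\Re(\mu_{\hom})\Vu\cdot\Vu^*$ has no sign: near the resonances $\Re(\mu_{\hom})$ is negative definite (this is precisely the band-gap regime the paper is interested in, e.g.\ $k=9$ in Section \ref{sec:experiment}), and the theorem makes no sign assumption on $\Re(\mu_{\hom})$ or on its jump across $\partial\Omega$, so ``the real part of the $\mu_{\hom}$-jump can be treated in the same way'' is unjustified and false in general. A further obstruction is that $(\varepsilon^{-1})_{\hom}|_\Omega$ is a full matrix at the interface: as the paper remarks after its Rellich lemma, the boundary/interface quadratic forms only decouple into tangential and normal parts when the coefficient reduces to a scalar there, so your rewriting of the $\partial\Omega$-terms into sign-controlled quadratic forms weighted by $\Vx\cdot\Vn_\Omega$ does not go through as stated.

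The paper sidesteps all three problems by perturbation rather than by a direct identity with the actual coefficients: it first establishes stability for the auxiliary problem with $\mu_{\hom}$ replaced by $\tilde\mu=\Id$, handling the discontinuity of $(\varepsilon^{-1})_{\hom}$ by approximating it with Lipschitz coefficients to which Proposition \ref{prop:stabilitylipschitz} applies --- the negative semi-definite jump hypothesis guarantees $DA\cdot\Vx\leq 0$ for the approximants, and crucially the stability constant there is independent of derivative information, so one may pass to the limit. It then treats $k^2(\tilde\mu-\mu_{\hom})\Vu$, supported in $\Omega$, as an additional right-hand side (using that the auxiliary stability yields an inf-sup constant of order $k^{-1}$, which removes the divergence-free restriction on this source --- a point your route would also have to address), and finally absorbs the resulting $k^2\|\Vu\|_{L^2(\Omega)}$ with your Step-1 estimate. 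To rescue your approach you would have to perform this same splitting $\mu_{\hom}=\Id+(\mu_{\hom}-\Id)$ \emph{before} applying the Morawetz multiplier, moving the entire difference (real and imaginary parts) to the data side; keeping the complex, possibly negative-real $\mu_{\hom}$ inside the identity fails.
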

 
The assumption on $(\varepsilon^{-1})_{\hom}$ in fact is an assumption on $\varepsilon^{-1}_0$ and can be fulfilled for appropriate choices of material inside and outside the scatterer.
It comes from the conditions on the derivative of $A$ in Proposition \ref{prop:stabilitylipschitz} and is similar to the two-dimensional case in \cite{OV16hmmhelmholtz}.
The different powers in $k$ in comparison to Proposition \ref{prop:stabilitylipschitz} are caused by the complex-valued $\mu_{\eff}$ and the dependence of $\Im(\mu_{\eff})$ on $k$, see also the discussion in Section \ref{subsec:stabilproof}.
Note that we obtain the same powers in $k$ as in the two-dimensional stability estimate in \cite{OV16hmmhelmholtz}.

In the following, we will work with the (abstract) polynomial stability of Assumption \ref{ass:polstable} and keep in mind that we have obtained an explicit (maximal) $q$ in Theorem \ref{thm:stabilityeff}.
Hence, we can conclude that the regularity constant from Proposition \ref{prop:regscattering} behaves like $k^{q+1}$.
Furthermore, we can also deduce the following form for the inf-sup-constant.
\begin{lemma}
\label{lem:infsupconst}
Under Assumption \ref{ass:polstable}, the sesquilinear form $\CB$ is inf-sup-stable with
\[\inf_{\underline{\Vv}\in\CH}\sup_{\underline{\Vw}\in\CH}\frac{|\CB(\underline{\Vv}, \underline{\Vw})|}{\|\underline{\Vv}\|_e\, \|\underline{\Vw}\|_e}\geq \frac{\gamma_{\mathrm{ell}}}{1+C_{\stab, e}C_gk^{q+1}},\]
where $C_{\stab, e}$ is the stability constant for the two-scale problem and consists of $C_{\stab, 0}$ from Assumption \ref{ass:polstable} and the stability constants from Lemma \ref{lem:stabcell} (which are all $k$-independent).
\end{lemma}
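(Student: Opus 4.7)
The plan is to combine the G{\aa}rding-type inequality of Lemma~\ref{lem:garding} with a Schatz-type duality argument adapted to the two-scale setting. Fix $\underline{\Vv}=(\Vv,\Vv_1,v_2,\Vv_3)\in\CH\setminus\{0\}$, let $\mathbf{p}:=\Vz+\chi_\Sigma\Vz_3$ denote the solenoidal part of $\Vv+\chi_\Sigma\Vv_3$ in the Helmholtz decomposition \eqref{eq:decomp}, and write $S:=\sup_{\underline{\Vw}\in\CH}|\CB(\underline{\Vv},\underline{\Vw})|/\|\underline{\Vw}\|_e$. Because $F$ is a bounded isomorphism of $(\CH,\|\cdot\|_e)$ (which follows from \eqref{eq:decomp} together with $\curl\nabla\theta=0$ and $\curl_y\nabla_y\theta_3=0$), Lemma~\ref{lem:garding} immediately yields
\[\gamma_{\mathrm{ell}}\|\underline{\Vv}\|_e^2\;\leq\;|\CB(\underline{\Vv},F\underline{\Vv})|+C_g k^2\|\mathbf{p}\|_{L^2(G\times Y)}^2\;\leq\;C\,S\,\|\underline{\Vv}\|_e+C_g k^2\|\mathbf{p}\|_{L^2(G\times Y)}^2.\]
The task thus reduces to bounding the mass term $C_g k^2\|\mathbf{p}\|^2$ by $C_{\mathrm{stab},e}C_g k^{q+1}\,S\,\|\underline{\Vv}\|_e$.

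To achieve this I would introduce the auxiliary (adjoint) two-scale problem: find $\underline{\Vxi}\in\CH$ such that
\[\CB(\underline{\Vpsi},\underline{\Vxi})\;=\;C_g k^2\,(\Vpsi+\chi_\Sigma\Vpsi_3,\mathbf{p})_{L^2(G\times Y)}\qquad\forall\underline{\Vpsi}\in\CH,\]
and test it with $\underline{\Vpsi}=\underline{\Vv}$. The $L^2(G\times Y)$-orthogonality in \eqref{eq:decomp} against the gradient pair $(\nabla\theta,\chi_\Sigma\nabla_y\theta_3)$, which carries over to complex-valued $\theta,\theta_3$ by conjugating the defining identity, collapses the right-hand side to $C_g k^2\|\mathbf{p}\|^2$, so that $C_g k^2\|\mathbf{p}\|^2\leq S\,\|\underline{\Vxi}\|_e$. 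It therefore suffices to prove the stability estimate $\|\underline{\Vxi}\|_e\leq C_{\mathrm{stab},e}\,C_g\,k^{q+2}\|\mathbf{p}\|_{L^2(G\times Y)}$. Combined with the elementary inequality $\|\mathbf{p}\|_{L^2(G\times Y)}\leq k^{-1}\|\underline{\Vv}\|_e$, this delivers $C_g k^2\|\mathbf{p}\|^2\leq C_{\mathrm{stab},e}C_g k^{q+1}\,S\,\|\underline{\Vv}\|_e$; inserting back into the G{\aa}rding estimate and dividing by $\|\underline{\Vv}\|_e$ then gives the claimed inf-sup lower bound.

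The adjoint stability estimate is the main technical step. My approach would be to repeat the reduction of Theorem~\ref{thm:effectiveeq} for the adjoint problem: by testing against tuples $\underline{\Vpsi}$ with only one of $\Vpsi_1$, $\psi_2$, or $\Vpsi_3$ nonzero, one sees that $\Vxi_1$ and $\xi_2$ obey the adjoints of \eqref{eq:cellproblem1} and \eqref{eq:cellproblem2} unchanged, while $\Vxi_3$ solves the adjoint of \eqref{eq:cellproblem3} with parameter $\Vxi+C_g\mathbf{p}$ in place of $\Vxi$. Substituting the correctors back and testing with a general $\Vpsi$, the macroscopic part $\Vxi$ satisfies the adjoint effective equation of the form \eqref{eq:effectiveeq} with a volume source $\Vf\in L^2(G)$ of norm $\|\Vf\|_{L^2(G)}\lesssim C_g k^2\|\mathbf{p}\|$; the orthogonality in \eqref{eq:decomp} against gradients $\nabla\eta$ with $\eta\in H^1_{\partial G}$ shows that $\Div\Vf=0$, so Assumption~\ref{ass:polstable} applies (it holds for the adjoint as well, because complex conjugation preserves its polynomial form) and gives $\|\Vxi\|_{\mathrm{imp};k;G}\lesssim C_{\mathrm{stab}}\,k^q\cdot C_g k^2\|\mathbf{p}\|$. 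Feeding this back into the $\mathbf{p}$-enriched cell-problem bounds from Lemma~\ref{lem:stabcell} then yields the desired $\|\underline{\Vxi}\|_e\leq C_{\mathrm{stab},e}C_g k^{q+2}\|\mathbf{p}\|$. The delicate point I anticipate is verifying carefully that this adjoint reduction genuinely produces a divergence-free $L^2(G)$ macroscopic source (so that Assumption~\ref{ass:polstable} is applicable verbatim) and that the two explicit powers of $k$ in the auxiliary right-hand side translate precisely into the factor $k^{q+2}$ in the stability bound; once this bookkeeping is done, the remaining algebra is routine.
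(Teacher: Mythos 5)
Your argument is correct and essentially reproduces the paper's own proof: both rest on the G{\aa}rding inequality of Lemma~\ref{lem:garding} together with a duality argument using the adjoint two-scale problem with divergence-free right-hand side $C_gk^2(\Vz+\chi_\Sigma\Vz_3)$ (so that Assumption~\ref{ass:polstable} applies), the two-scale stability constant $C_{\stab,e}k^q$ assembled from Assumption~\ref{ass:polstable} and Lemma~\ref{lem:stabcell}, and the bound $k\|\Vz+\chi_\Sigma\Vz_3\|_{L^2(G\times Y)}\leq\|\underline{\Vv}\|_e$ from the Helmholtz decomposition \eqref{eq:decomp}. The only cosmetic difference is that the paper tests with the single function $F(\underline{\Vv})+\underline{\Vw}$ and bounds the inf-sup ratio directly, whereas you estimate the two G{\aa}rding terms separately against the supremum $S$; since $\|F(\underline{\Vv})\|_e\leq\|\underline{\Vv}\|_e$ by the orthogonality in \eqref{eq:decomp}, your generic constant can be taken equal to $1$ and both routes yield the identical constant $\gamma_{\mathrm{ell}}/(1+C_{\stab,e}C_gk^{q+1})$.
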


\begin{proof}
Let $\underline{\Vv}=(\Vv, \Vv_1, v_2, \Vv_3)\in \CH$ be arbitrary and let $\underline{\Vw}\in \CH$ be the solution to the adjoint two-scale problem with right hand-side $C_gk^2(\Vz+\chi_\Sigma\Vz_3)$ for the Helmholtz decomposition of $\underline{\Vv}$ according to \eqref{eq:decomp}.
Note that $\Vz$ and $\Vz_3$ are divergence-free and therefore, Assumption \ref{ass:polstable} can be applied.
Recall the sign-flip isomorphism and the G{\aa}rding inequality from Lemma \ref{lem:garding}.
On the one hand, we have
\begin{align*}
\bigl|\CB(\underline{\Vv}, F(\underline{\Vv})+\underline{\Vw})\bigr|&=\bigl|\CB(\underline{\Vv}, F(\underline{\Vv}))+C_gk^2(\Vz+\chi_\Sigma\Vz_3, \Vv+\chi_{\Sigma^*}\nabla_y v_2+\chi_{\Sigma}\Vv_3)\bigr|\\
&=\bigl|\CB(\underline{\Vv}, F(\underline{\Vv}))+C_gk^2\|\Vz+\chi_\Sigma\Vz_3\|^2\bigr|\geq \gamma_{\mathrm{ell}}\|\underline{\Vv}\|_e^2.
\end{align*}
On the other hand, it holds that
\begin{align*}
\|F(\underline{\Vv})+\underline{\Vw}\|_e&\leq \|F(\underline{\Vv})\|_e+\|\Vw\|_e\leq \|\underline{\Vv}\|_e+C_{\stab, e}k^q C_gk^2\|\Vz+\chi_\Sigma\Vz_3\|_{L^2(G\times Y)}\\
&\leq (1+C_{\stab, e}C_gk^{q+1})\|\underline{\Vv}\|_e.
\end{align*}
Combining both estimates finishes the proof.
\end{proof}

\section{Numerical method and error analysis}
\label{sec:numerics}
As explained in the introduction, a direct discretization of the heterogeneous problem \eqref{eq:scatteringweak} is infeasible due to the necessary small mesh width. 
In Subsection \ref{subsec:HMM}, we introduce the HMM and perform its rigorous numerical analysis in Subsection \ref{subsec:aprioriindef}.

\subsection{The Heterogeneous Multiscale Method}
\label{subsec:HMM}
The idea of the Heterogeneous Multiscale Method (HMM) is to imitate the homogenization procedure and thereby provide a method with $\delta$-independent mesh sizes.
Following the original idea \cite{Ohl05HMM} for elliptic diffusion problems, we concentrate on the direct discretization of the two-scale equation \eqref{eq:twoscaleeq}.
This point of view is vital for the numerical analysis in Subsection \ref{subsec:aprioriindef}.
However, we will also shortly explain below how this direct discretization can be decoupled into coarse- and fine-scale computations in the traditional fashion of the HMM as presented in \cite{EE03hmm, EE05hmm}.

In this and the next section, we assume that $\Sigma$, $\Omega$, and $G$ are Lipschitz polyhedra (in contrast to the $C^2$ boundaries in the analytic sections).
The reason is that the $C^2$ boundaries can be approximated by a series of more and more fitting polygonal boundaries. 
This procedure of boundary approximation results in non-conforming methods, i.e.\ the discrete function spaces are no subspaces of the analytic ones.
We avoid this difficulty in our numerical analysis by assuming polygonally bounded domains by now. The new assumption reduces the possible higher regularity of solutions as discussed in Section \ref{sec:analysis}. However, we can always obtain the maximal regularity in the limit of polygonal approximation of $C^2$ boundaries, which we have in mind as application case.

Denote by $\CT_H=\{ T_j|j\in J\}$ and $\CT_h=\{S_l|l\in I\}$ conforming and shape regular triangulations of $G$ and $Y$, respectively. Additionally, we assume that $\CT_H$ resolves the partition into $\Omega$ and $G\setminus \overline{\Omega}$ and that $\CT_h$ resolves the partition of $Y$ into $\Sigma$ and $\Sigma^*$ and is periodic in the sense that it can be wrapped to a regular triangulation of the torus (without hanging nodes). 
We define the local mesh sizes $H_j:=\diam(T_j)$ and $h_l:=\diam(S_l)$ and the global mesh sizes $H:=\max_{j\in J}H_j$ and $h:=\max_{l\in I}h_l$.
We denote the barycenters by $x_j\in T_j$ and $y_l\in S_l$.

We use the following conforming finite element spaces, associated with the meshes $\CT_H$ or $\CT_h$,
\begin{itemize}
\item the classical linear Lagrange elements $\widetilde{W}_h(\Sigma^*)\subset H^1_{\sharp, 0}(\Sigma^*)$ (adopted to periodic boundary conditions and zero mean value);
\item N{\'e}d{\'e}lec edge elements of lowest order $\VV_H\subset \VH_{\mbox{\tiny{imp}}}(G)$, $\VV_h(\Sigma)\subset \VH_0(\curl, \Sigma)$, and $\widetilde{\VV}_h(\Sigma^*)\subset \widetilde{\VH}_\sharp(\curl, \Sigma^*)$.
\end{itemize}
The space $\widetilde{\VV}_h(\Sigma^*)$ is used to discretize the first corrector $\Vu_1$.
As discussed in Section \ref{sec:homogenization}, we are only interested in its curl. However, in order to obtain a unique solution $\Vu_{h,1}$, we have to apply a suitable stabilization procedure to the corresponding cell problem, such as a Lagrange multiplier or weighted divergence regularization, see \cite{CD00maxwellsingularities, CD02weightedregulmaxwell}.
As an alternative, we can also directly discretize $\curl_y \Vu_1(x, \cdot)$ in a suitable finite element space.

\begin{definition}
\label{def:hmmdiscrtwosc}
Define the piecewise constant approximations $\varepsilon_{0, h}^{-1}$ and $\varepsilon_{1,h}^{-1}$ on $\Omega \times Y$ by $\varepsilon^{-1}_{\cdot, h}(x,y)|_{T_j\times S_l}:=\varepsilon_{\cdot}^{-1}(x_j, y_l)$.
The discrete two-scale solution
\[(\Vu_H, \Vu_{h,1}, u_{h,2}, \Vu_{h,3})\in  \VV_{H,h}:=\VV_H\times L^2(\Omega; \widetilde{\VV}_h(\Sigma^*))\times L^2(\Omega; \widetilde{W}_h(\Sigma^*))\times L^2(\Omega;\VV_h(\Sigma))\]
is defined as the solution of
\begin{equation}
\label{eq:discretetwosceq}
\begin{split}
&\!\!\!\!\CB_h((\Vu_H, \Vu_{h,1}, u_{h,2}, \Vu_{h,3}), (\Vpsi_H, \Vpsi_{h,1}, \psi_{h,2}, \Vpsi_{h,3}))=(\Vg,(\Vpsi_H)_T)_{\partial G} \\
&\qquad \forall (\Vpsi_H, \Vpsi_{h,1}, \psi_{h,2}, \Vpsi_{h,3})\in \VV_H\times L^2(\Omega; \widetilde{\VV}_h(\Sigma^*))\times L^2(\Omega; \widetilde{W}_h(\Sigma^*))\times L^2(\Omega; \VV_h(\Sigma)),
\end{split}
\end{equation}
where the sesquilinear form $\CB_h$ equals $\CB$ from Theorem \ref{thm:twoscaleeq}, but with the coefficients $\varepsilon^{-1}_\cdot$ replaced by the piecewise constant approximations $\varepsilon^{-1}_{\cdot, h}$.
\end{definition}
In order to evaluate the integrals over $G$ in $\CB_h$, one introduces quadrature rules, which are exact for the given ansatz and test spaces.
In our case of piecewise linear functions, it suffices to choose the one-point rule $\{|T_j|, x_j\}$ with the barycenter $x_j$ for the curl part and a second order quadrature rule $Q^{(2)}:=\{q_l, x_l\}_l$ with $l=1, \ldots, 4$ for the identity part on each tetrahedron.
As a consequence, the functions $\Vu_{h,1}$, $u_{h,2}$, and $\Vu_{h, 3}$ will also be discretized in their part depending on the macroscopic variable $x$:
In fact, one has $\Vu_{h,1}\in S_H^0(\Omega;\widetilde{\VV}_h(\Sigma^*))$, $u_{h, 2}\in S_H^1(\Omega; \widetilde{W}_h^1(\Sigma^*))$, and $\Vu_{h,3}\in S_H^1(\Omega; \VV_h(\Sigma))$.
Here, the space of discontinuous, piecewise $p$-polynomial (w.r.t.\ $x$) discrete functions is defined as
\begin{align*}
S_H^p(\Omega; X_h)&:=\{v_h\in L^2(\Omega; X)|\,v_h(\cdot, y)|_{T_j}\in\pz^p\; \forall j\in J, y\in Y; v_h(x, \cdot)\in X_h\;\forall x\in \Omega\},
\end{align*}
for any conforming finite element space $X_h\subset X$.
Note that $u_{h,2}$ and $\Vu_{h, 3}$ are piecewise $x$-linear discrete functions, since $Q^{(2)}$ consists of $4$ quadrature points on each tetrahedron.

The functions $\Vu_{h,1}$, $u_{h,2}$, and $\Vu_{h,3}$ are the discrete counterparts of the analytical correctors $\Vu_1$, $u_2$ and $\Vu_3$ introduced in Theorem \ref{thm:effectiveeq}.
These corrections are an important part of the HMM-approximation and cannot be neglected as higher order terms: 
For Maxwell's equations, we saw in \cite{HOV15maxwellHMM, GHV17lodmaxwell} that $u_{h,2}$ is necessary to obtain good $L^2$ approximations.
Additionally, the corrector $\Vu_{h,3}$ encodes the behavior of the solution inside the inclusions, see \cite{OV16hmmhelmholtz} for the Helmholtz equation with high contrast.

$\Vu_{h,1}$, $u_{h,2}$, and $\Vu_{h,3}$ are correctors to the macroscopic discrete function $\Vu_H$ and solve discretized cell problems.
These cell problems, posed on the unit cube $Y$, can be transferred back to $\delta$-scaled and shifted unit cubes $Y_j^\delta=x_j+\delta Y$, where $x_j$ is a  macroscopic quadrature point. 
This finally gives an equivalent formulation of \eqref{eq:discretetwosceq} in the form of a (traditional) HMM.
The formulation using a macroscopic sesquilinear form with local cell reconstructions is used in practical implementations.
We emphasize that the presented HMM also works for locally periodic $\varepsilon^{-1}_0$ and $\varepsilon^{-1}_1$ depending on $x$ and $y$.

\subsection{A priori error estimates}
\label{subsec:aprioriindef}
Based on the definition of the HMM as direct discretization of the two-scale equation (Definition \ref{def:hmmdiscrtwosc}), we analyze its well-posedness and quasi-optimality in Theorem \ref{thm:infsupcond}. 
This quasi-optimality is a kind of C{\'e}a lemma for indefinite problems and leads to explicit a priori estimates in Corollary \ref{cor:errorestimates} and Theorem \ref{thm:dualestimates}.
As discussed for the G{\aa}rding inequality and in general in \cite{GHV17lodmaxwell}, we will again frequently use the Helmholtz decomposition in our analysis.

For simplicity, we consider the case of constant $\varepsilon_0$ and $\varepsilon_1$ here, so that $\CB_h=\CB$.
The non-conformity occurring from numerical quadrature only leads to additional data approximation errors, which are of higher order for sufficiently smooth coefficients (e.g.\ Lipschitz continuous).
Let us define the error terms $e_0=\Vu-\Vu_H$, $e_1=\Vu_1-\Vu_{h,1}$, $e_2=u_2-u_{h,2}$, and $e_3=\Vu_3-\Vu_{h,3}$ and set $\Ve:=(e_0, e_1, e_2, e_3)$.
We will only estimate these errors and leave the modeling error, introduced by homogenization, apart.
All proofs are postponed to Subsection \ref{subsec:hmmproof}.

\begin{theorem}[Discrete inf-sup-condition and quasi-optimality]
\label{thm:infsupcond}
Under the resolution condition 
\begin{equation}
\label{eq:resolcond1}
C_cC_{\appr}(C_g+2)(k^{q+2}(H^s+h^{t_1}+h^{t_2}+h^{t_3})+ k^{q+3/2}H^{s-1/2})\leq \gamma_{\mathrm{ell}}/2,
\end{equation}
we have the discrete inf-sup condition
\[\inf_{\Vv_{H,h}\in \VV_{H,h}}\sup_{\Vw_{H,h}\in \VV_{H,h}}\frac{|\CB(\Vv_{H,h}, \Vw_{H,h})|}{\|\Vv_{H,h}\|_e\|\Vw_{H,h}\|_e}\geq \frac{\gamma_{\mathrm{ell}}}{2+\gamma_{\mathrm{ell}}/C_c+2C_gC_{\stab, e}k^{q+1}} \sim k^{-(q+1)}\] 
and the error between the analytical and discrete two-scale solution satisfies
\begin{equation}
\label{eq:quasiopt}
\|(e_0, e_1, e_2, e_3)\|_e\leq \frac{2C_c}{\gamma_{\mathrm{ell}}}\inf_{\Vv_{H,h}\in \VV_{H,h}}\|\underline{\Vu}-\Vv_{H,h}\|_e.
\end{equation}
\end{theorem}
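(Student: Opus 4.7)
My plan is to lift the continuous inf-sup bound of Lemma \ref{lem:infsupconst} to the discrete level by a Schatz-type argument, using that the continuous proof is constructive: it produces a concrete test function ($F(\Vv)$ plus an adjoint solution driven by the divergence-free Helmholtz component), so I only need to approximate that test function well enough in $\VV_{H,h}$. Concretely, fix $\Vv_{H,h}=(\Vv_H,\Vv_{h,1},v_{h,2},\Vv_{h,3})\in\VV_{H,h}$ and apply the Helmholtz decomposition \eqref{eq:decomp} to $(\Vv_H,\Vv_{h,3})$ to extract $(\Vz,\theta,\Vz_3,\theta_3)$. By Remark \ref{rem:reghelmholtzdecomp} the divergence-free parts $\Vz,\Vz_3$ have the piecewise $\VH^s/\VH^{t_3}$-regularity needed for FE approximation, even though $\Vv_{H,h}$ itself only lies in the finite element space.

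Next, I let $\underline{\Vw}\in\CH$ solve the adjoint two-scale problem with data $C_gk^2(\Vz+\chi_\Sigma\Vz_3)$; since this data is divergence-free, Assumption \ref{ass:polstable} applies and, as in the proof of Lemma \ref{lem:infsupconst}, yields $\|\underline{\Vw}\|_e\le C_{\stab,e}C_gk^{q+1}\|\Vv_{H,h}\|_e$, while Propositions \ref{prop:regcellandcavity} and \ref{prop:regscattering} give piecewise $\VH^s$ and $\VH^{t_j}$ regularity with constants of order $k^{q+1}$. I then define the discrete test function $\Vw_{H,h}\in\VV_{H,h}$ by applying componentwise (quasi-)interpolants to $F(\Vv_{H,h})+\underline{\Vw}$: the correctors $\Vv_{h,1},-v_{h,2}$ in $F(\Vv_{H,h})$ are already discrete and remain untouched, while $\Vz-\nabla\theta$, $\Vz_3-\nabla_y\theta_3$ and the four components of $\underline{\Vw}$ are projected via the standard Nédélec and Lagrange interpolation operators on $\CT_H$ and $\CT_h$.

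Combining Lemma \ref{lem:garding} with the fact that $\underline{\Vw}$ reproduces the missing $L^2$-term in $F$ gives $|\CB(\Vv_{H,h},F(\Vv_{H,h})+\underline{\Vw})|\ge\gamma_{\mathrm{ell}}\|\Vv_{H,h}\|_e^2$. The interpolation defect is estimated by standard FE bounds combined with the wavenumber-explicit regularity: bulk contributions yield a factor $C_{\appr}k^{q+2}(H^s+h^{t_1}+h^{t_2}+h^{t_3})$, while the boundary part of the imp-norm (via trace regularity, scaling as $H^{s-1/2}$ with a prefactor $k^{1/2}$) contributes $C_{\appr}k^{q+3/2}H^{s-1/2}$, all multiplied by $\|\Vv_{H,h}\|_e$. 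Under the resolution condition \eqref{eq:resolcond1}, the factor $(C_g+2)$ allows to absorb the continuity constant $C_c$ together with an additional margin needed below, so $|\CB(\Vv_{H,h},\Vw_{H,h})|\ge(\gamma_{\mathrm{ell}}/2)\|\Vv_{H,h}\|_e^2$. Dividing by $\|\Vw_{H,h}\|_e\le(1+C_{\stab,e}C_gk^{q+1}+\gamma_{\mathrm{ell}}/(2C_c))\|\Vv_{H,h}\|_e$ gives the stated inf-sup constant of order $k^{-(q+1)}$. For \eqref{eq:quasiopt}, I apply the discrete inf-sup to the error $(\Vu_H,\Vu_{h,1},u_{h,2},\Vu_{h,3})-\Vv_{H,h}\in\VV_{H,h}$, insert the Galerkin orthogonality $\CB(\underline{\Vu}-(\Vu_H,\Vu_{h,1},u_{h,2},\Vu_{h,3}),\cdot)=0$, and conclude by the triangle inequality.

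The main obstacle is the approximation step. Since $F$ is defined through a Helmholtz decomposition that does not preserve $\VV_{H,h}$, the image $F(\Vv_{H,h})$ is genuinely non-discrete, and the four components of $F(\Vv_{H,h})+\underline{\Vw}$ live in different spaces and must be approximated simultaneously without breaking the coupling (tangential trace on $\partial G$, mean-value and periodicity on $\Sigma^*$, vanishing tangential trace on $\partial\Sigma$). One has to track all powers of $k$ coming from the stability $k^{q+1}$, the regularity $k$-prefactor, the mesh scaling in the imp-norm, and the boundary trace $H^{s-1/2}$ term, and show that the product fits exactly into \eqref{eq:resolcond1}; this explains the appearance of the combined bound $k^{q+2}(H^s+h^{t_j})+k^{q+3/2}H^{s-1/2}$ rather than a single mesh-times-wavenumber factor.
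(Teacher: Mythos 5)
Your argument for the discrete inf-sup condition is essentially the paper's own proof: Helmholtz decomposition of the discrete function, adjoint two-scale solution with the divergence-free datum $C_gk^2(\Vz+\chi_\Sigma\Vz_3)$, a discrete test function obtained by interpolating the non-discrete $F(\Vv_{H,h})+\underline{\Vw}$, and absorption of the defects via \eqref{eq:resolcond1}; your arithmetic for the final constant also matches. One point you gloss over, which the paper makes explicit and which is not automatic, is why the defect $(I_{H,h}-\mathrm{id})F(\Vv_{H,h})$ is estimable at all: writing $F(\Vv_{H,h})=2\underline{\Vz}-\Vv_{H,h}$ with $\underline{\Vz}=(\Vz,\Vv_{h,1},0,\Vz_3)$, one has $\curl\underline{\Vz}=\curl\Vv_{H,h}$ discrete, hence $\curl(I_{H,h}-\mathrm{id})\underline{\Vz}=0$ because the nodal interpolation is a commuting projector; only this allows the modified interpolation estimates of Gatica--Meddahi, since by Remark \ref{rem:reghelmholtzdecomp} the fields $\Vz$, $\Vz_3$ (but not their curls) have the needed fractional regularity. ``Standard FE bounds'' alone do not apply here. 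Still, this is a fixable presentational issue.

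The genuine gap is your proof of the quasi-optimality \eqref{eq:quasiopt}. You derive it from the discrete inf-sup condition via Galerkin orthogonality and the triangle inequality -- the standard Babu\v{s}ka argument. That yields $\|\Ve\|_e\leq(1+C_c/\gamma_{\mathrm{dis}})\inf_{\Vv_{H,h}\in\VV_{H,h}}\|\underline{\Vu}-\Vv_{H,h}\|_e$ with the discrete inf-sup constant $\gamma_{\mathrm{dis}}\sim\gamma_{\mathrm{ell}}\,k^{-(q+1)}$ you just established, i.e.\ a quasi-optimality constant of order $k^{q+1}$, \emph{not} the $k$-independent constant $2C_c/\gamma_{\mathrm{ell}}$ asserted in the theorem. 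The whole point of \eqref{eq:quasiopt} is that under the resolution condition the Galerkin error is quasi-optimal uniformly in $k$ even though the inf-sup constant degenerates, so the route through the inf-sup constant cannot prove it. The paper instead applies the G{\aa}rding-type inequality \eqref{eq:gardingtwosc} directly to the error, giving $\gamma_{\mathrm{ell}}\|\Ve\|_e^2\leq|\CB(\Ve,\Ve)|+(C_g+2)k^2\|\Vz+\chi_\Sigma\Vz_3\|^2_{L^2(G\times Y)}$ for the Helmholtz decomposition of $e_0+\chi_\Sigma e_3$, bounds the compact $L^2$-term by an Aubin--Nitsche duality argument (dual problem with datum $\Vz+\chi_\Sigma\Vz_3$, Galerkin orthogonality, Lemma \ref{lem:adjapprox}) to get $k\|\Vz+\chi_\Sigma\Vz_3\|_{L^2(G\times Y)}\leq C_cC_{\appr}\bigl(k^{q+2}(H^s+h^{t_1}+h^{t_2}+h^{t_3})+k^{q+3/2}H^{s-1/2}\bigr)\|\Ve\|_e$, uses $\CB(\Ve,\Ve)=\CB(\Ve,\underline{\Vu}-\Vv_{H,h})$, and absorbs the duality term into the left-hand side via \eqref{eq:resolcond1}. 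To repair your proposal, replace the inf-sup/triangle-inequality step by this duality-plus-absorption argument on the error itself.
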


The approximation result of Lemma \ref{lem:adjapprox} (see below) gives explicit convergences rates from the quasi-optimality.
\begin{corollary}
\label{cor:errorestimates}
Under the assumptions of Theorem \ref{thm:infsupcond}, the energy error can be estimated as
\begin{equation*}
\begin{split}
\|(e_0, e_1, e_2, e_3)\|_e
&\lesssim (k^{q+1}(H^s + h^{t_1} + h^{t_2} + h^{t_3})+ k^{q+1/2}H^{s-1/2})\|\Vg\|_{L^2(\partial G)}.
\end{split}
\end{equation*}
\end{corollary}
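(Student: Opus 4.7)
The plan is to derive Corollary \ref{cor:errorestimates} as a direct consequence of the quasi-optimality \eqref{eq:quasiopt} of Theorem \ref{thm:infsupcond}, the approximation estimate of Lemma \ref{lem:adjapprox} (stated directly before the corollary), and the polynomial stability Assumption \ref{ass:polstable}.

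First I would invoke \eqref{eq:quasiopt} to reduce the problem to bounding $\inf_{\Vv_{H,h}\in \VV_{H,h}}\|\underline{\Vu}-\Vv_{H,h}\|_e$, and then choose $\Vv_{H,h}$ component-wise as the canonical interpolant of $\underline{\Vu}$: the lowest-order N\'ed\'elec interpolant on $\CT_H$ for $\Vu$, the N\'ed\'elec interpolants on $\CT_h$ in the $y$-variable for $\Vu_1(x,\cdot)$ and $\Vu_3(x,\cdot)$, the scalar Lagrange interpolant for $u_2(x,\cdot)$, together with a piecewise $\pz^0$ or $\pz^1$ interpolation in the macroscopic variable $x$ compatible with the discrete spaces.

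Next I would estimate each of the four summands of $\|\underline{\Vu}-\Vv_{H,h}\|_e^2$ (see \eqref{eq:energynorm}) by means of Lemma \ref{lem:adjapprox} in combination with the regularity results of Propositions \ref{prop:regcellandcavity} and \ref{prop:regscattering}. The macroscopic curl and weighted $L^2$ terms produce a factor $H^s(\|\curl\Vu\|_{\VH^s_{pw}(G)}+k\|\Vu\|_{\VH^s_{pw}(G)})$, which is controlled by $H^s(1+k)\|\Vu\|_{\curl;k;G}$. The corrector contributions yield $h^{t_j}$-factors times the corresponding fractional-regularity norms from Proposition \ref{prop:regcellandcavity}, each of which is in turn bounded by $(1+k)\|\Vu\|_{\imp;k;G}$. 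Finally, the boundary trace term $k^{1/2}\|(\Vu-\Vu_H)_T\|_{L^2(\partial G)}$ is handled by the surface interpolation rate $H^{s-1/2}$ combined with the fractional trace estimate \eqref{eq:regbdryscatter}.

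Lastly, I would apply Assumption \ref{ass:polstable} with $\Vf=0$ to absorb $\|\Vu\|_{\imp;k;G}\le C_{\stab}\,k^q\|\Vg\|_{L^2(\partial G)}$. Combined with the extra $(1+k)$ prefactor from the regularity estimates, this produces the weight $k^{q+1}$ in front of $H^s+h^{t_1}+h^{t_2}+h^{t_3}$, while the boundary contribution carries only $k^{q+1/2}H^{s-1/2}$ because of the lower $k^{1/2}$-weight built into the energy norm. The hard part will be bookkeeping the $k$-powers correctly, especially for the boundary term, so that the half-power loss arising from both the $k^{1/2}$-weight and the $H^{s-1/2}$ trace rate is accurately reflected; all other steps are routine interpolation together with the already established regularity and stability results.
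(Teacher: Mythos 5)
Your proposal is correct and takes essentially the same route as the paper: the paper's one-line proof is precisely quasi-optimality \eqref{eq:quasiopt} combined with the interpolation--regularity--stability argument underlying Lemma \ref{lem:adjapprox}, applied to the primal solution $\underline{\Vu}$ with Assumption \ref{ass:polstable} invoked for $\Vf=0$. Your componentwise interpolation and $k$-power bookkeeping (the $k^{q+1}$ weight on the volume rates from the $(1+k)\,k^q$ regularity--stability chain of Propositions \ref{prop:regcellandcavity} and \ref{prop:regscattering}, and the $k^{q+1/2}H^{s-1/2}$ boundary contribution via \eqref{eq:regbdryscatter}) match the paper's reasoning exactly.
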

Assuming smooth domains (i.e.\ maximal regularity), the a priori estimate gives linear convergence for the volume terms and $H^{1/2}$ convergence rate for the boundary terms.
These are classical optimal convergence rates under mesh refinement for problems posed in $\VH(\curl)$, see \cite{EG17maxwell, GM12maxwellimpedance}.

As discussed in \cite{HOV15maxwellHMM, GHV17lodmaxwell}, we have to go to dual norms to obtain higher order convergence.

\begin{theorem}
\label{thm:dualestimates}
Let $e_0+\chi_\Sigma e_3=\Vz+\chi_\Sigma\Vz_3+\nabla \theta+\chi	_\Sigma\nabla_y \theta_3$ be the Helmholtz decomposition of the error according to \eqref{eq:decomp}.
This decomposition satisfies the following a priori estimate
\begin{align*}
\|\Vz+\chi_\Sigma\Vz_3\|_{L^2(G\times Y)}+\|\theta+\chi_\Sigma\theta_3\|_{L^2(G\times Y)}\lesssim (k^{q+1}(H^s+h^{t_1}+h^{t_2}+ h^{t_3})+k^{q+1/2} H^{s-1/2})\|\Ve\|_e.
\end{align*}
\end{theorem}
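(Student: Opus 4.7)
The estimate is a duality (Aubin--Nitsche) argument carried out separately for the divergence-free summand $\|\Vz+\chi_\Sigma\Vz_3\|_{L^2}$ and the potential summand $\|\theta+\chi_\Sigma\theta_3\|_{L^2}$. In both cases the mechanism is the same: combine the Galerkin orthogonality $\CB(\Ve,\underline{\Vpsi}_{H,h})=0$ for all $\underline{\Vpsi}_{H,h}\in\VV_{H,h}$ with the polynomial stability of Assumption~\ref{ass:polstable} and the regularity results of Propositions~\ref{prop:regcellandcavity} and~\ref{prop:regscattering} together with Remark~\ref{rem:reghelmholtzdecomp}.

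For the divergence-free part I would introduce the adjoint two-scale problem with right-hand side $\Vz+\chi_\Sigma\Vz_3$. The orthogonality built into the Helmholtz splitting~\eqref{eq:decomp} makes this right-hand side divergence free, so Assumption~\ref{ass:polstable} applies to the adjoint solution $\underline{\Vw}\in\CH$ and yields $\|\underline{\Vw}\|_e\lesssim k^q\|\Vz+\chi_\Sigma\Vz_3\|_{L^2}$. Testing the adjoint equation with $\Ve$ reproduces $\|\Vz+\chi_\Sigma\Vz_3\|_{L^2}^{2}$ on one side (up to a cross-term discussed below), while Galerkin orthogonality allows me to replace $\underline{\Vw}$ by any $\underline{\Vw}_{H,h}\in\VV_{H,h}$ on the other. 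Combining Nédélec and Lagrange interpolation with the higher regularity $\|\underline{\Vw}\|_{\VH^s_{pw}(\curl,G)}\lesssim k^{q+1}\|\Vz+\chi_\Sigma\Vz_3\|_{L^2}$ and the trace bound~\eqref{eq:regbdryscatter}, the volume contribution scales as $k^{q+1}(H^s+h^{t_1}+h^{t_2}+h^{t_3})$ and the boundary contribution as $k^{q+1/2}H^{s-1/2}$, so that after dividing by one factor of $\|\Vz+\chi_\Sigma\Vz_3\|_{L^2}$ the stated rate appears.

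For the potential summand I would first reduce to bounding $\|\nabla\theta+\chi_\Sigma\nabla_y\theta_3\|_{L^2(G\times Y)}$: fixing the additive constant in $\theta\in H^1_{\partial G}$ so that $\theta|_{\partial G}=0$ and using $\theta_3(x,\cdot)\in H^1_0(\Sigma)$, Poincaré--Friedrichs gives $\|\theta+\chi_\Sigma\theta_3\|_{L^2(G\times Y)}\lesssim\|\nabla\theta\|_{L^2}+\|\nabla_y\theta_3\|_{L^2}$, and the vanishing of $\int_\Sigma\nabla_y\theta_3\,dy$ shows that the right-hand side equals $\|\nabla\theta+\chi_\Sigma\nabla_y\theta_3\|_{L^2}$. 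I then plug the test function $(\nabla\theta,0,0,\nabla_y\theta_3)\in\CH$ into the error equation: all curl terms and the impedance boundary term vanish (the latter because $(\nabla\theta)_T|_{\partial G}=0$), and the identity collapses to $-k^2\|\nabla\theta+\chi_\Sigma\nabla_y\theta_3\|_{L^2}^2=\CB(\Ve,(\nabla\theta,0,0,\nabla_y\theta_3))+k^2(\text{cross})$. Galerkin orthogonality against $(\nabla\theta_H,0,0,\nabla_y\theta_{3,h})\in\VV_{H,h}$ (admissible because gradients of scalar Lagrange functions lie in the lowest-order Nédélec space by the discrete de~Rham property), combined with scalar Lagrange approximation of $\theta$ and $\theta_3$, produces the same rate.

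The main obstacle is the cross-term $(\chi_{\Sigma^*}\nabla_y e_2,\nabla\theta+\chi_\Sigma\nabla_y\theta_3)_{L^2(G\times Y)}$, which shows up in both arguments because the $L^2$ part of $\CB$ pairs the compound field $\Vv+\chi_{\Sigma^*}\nabla_y v_2+\chi_\Sigma\Vv_3$, whereas~\eqref{eq:decomp} only splits $\Vv+\chi_\Sigma\Vv_3$. Integration by parts in $y$ on $\Sigma^*$, with periodic cancellation on $\partial Y$, collapses it to a surface integral on $\partial\Sigma$ that is absorbed using the corrector stability $k\|\nabla_y e_2\|_{L^2}\lesssim\|\Ve\|_e$ from Lemma~\ref{lem:stabcell} together with the norm equivalence of Lemma~\ref{lem:equivnorms} and a trace inequality. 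The other delicate point is the tight $k$-bookkeeping: the $k^q$ from stability, one further factor of $k$ from regularity, and the weighting in the energy norm must combine with the interpolation powers $H^s$ and $H^{s-1/2}$ to reproduce exactly the exponents $k^{q+1}$ and $k^{q+1/2}$, so any slack in an inequality along the way would degrade the final rate.
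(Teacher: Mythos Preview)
Your treatment of the divergence-free summand $\|\Vz+\chi_\Sigma\Vz_3\|_{L^2}$ is correct and coincides with the paper's argument (it is exactly the estimate \eqref{eq:l2estimatez} obtained in the proof of Theorem~\ref{thm:infsupcond} via the dual problem \eqref{eq:dualproblem1} and Lemma~\ref{lem:adjapprox}). Note, however, that no $e_2$ cross-term appears here: the right-hand side of \eqref{eq:dualproblem1} only pairs $\Vf+\chi_\Sigma\Vf_3$ with $\Vpsi+\chi_\Sigma\Vpsi_3$, not with $\nabla_y\psi_2$, so testing with $\Ve$ gives $(\Vz+\chi_\Sigma\Vz_3,e_0+\chi_\Sigma e_3)=\|\Vz+\chi_\Sigma\Vz_3\|^2$ directly by the orthogonality in \eqref{eq:decomp}.

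The argument for the potential summand, however, has a genuine gap. What you propose is not a duality argument: you test the error equation with $(\nabla\theta,0,0,\nabla_y\theta_3)$ and then approximate $\theta,\theta_3$ themselves by Lagrange elements. But $\theta$ and $\theta_3$ are the potentials in the Helmholtz decomposition of the \emph{error}; $\theta$ solves a Poisson problem whose right-hand side is essentially $\Div e_0\in H^{-1}$, so you have no control of $\|\theta\|_{H^{1+s}}$ in terms of $\|\Ve\|_e$, and the Lagrange approximation $\|\nabla(\theta-\theta_H)\|$ does not yield a factor $H^s$ with a useful constant. In addition, the cross-term $(\chi_{\Sigma^*}\nabla_y e_2,\nabla\theta)$ does not vanish (after integrating in $y$ it becomes a $\partial\Sigma$ integral of $e_2$, which has no reason to be zero), and your proposed bound via $k\|\nabla_y e_2\|\lesssim\|\Ve\|_e$ carries no mesh-size factor, so it destroys the rate.

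The paper fixes both issues by introducing a \emph{second} dual problem, a coercive scalar two-scale Laplace problem on $\CS:=H^1_{\partial G}\times L^2(\Omega;H^1_{\sharp,0}(\Sigma^*))\times L^2(\Omega;H^1_0(\Sigma))$ with right-hand side $\theta+\chi_\Sigma\theta_3$. The dual solution $(w,w_2,w_3)$ has standard elliptic regularity (this is where the mesh-size factor comes from), and the presence of the slot $w_2\in H^1_{\sharp,0}(\Sigma^*)$ is precisely what absorbs $e_2$: one has the identity $\CA((\theta,e_2,\theta_3),\Vw)=\CB((\nabla\theta,e_1,e_2,\nabla_y\theta_3),(\nabla w,0,w_2,\nabla_y w_3))$, which after writing $(\nabla\theta,e_1,e_2,\nabla_y\theta_3)=\Ve-(\Vz,0,0,\Vz_3)$ splits into a term handled by Galerkin orthogonality and Lagrange approximation of the \emph{dual} solution, plus a term bounded by $k^2\|\Vz+\chi_\Sigma\Vz_3\|$ times the stable dual norm, for which the rate is already known from the first step.
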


\smallskip
Assuming maximal regularity, i.e.\ $s=t_1=t_2=t_3=1$, and optimal stability with $q=0$, the resolution condition reads $k^2(H+h)+k^{3/2}H^{1/2}\lesssim 1$.
The first part $k^2(H+h)$ comes from the volume terms and is unavoidable for the Helmholtz equation, see \cite{OV16hmmhelmholtz} and \cite{Sau06convanahelmholtz}.
The second part $k^{3/2}H^{1/2}$ is caused by the boundary terms,  which are an essential part of the energy norm for Maxwell equations.
In contrast to the Helmholtz equation, they cannot be estimated against the volume terms by using a trace inequality and thus, seem to be unavoidable as well.
The powers in $k$ and $H$ for the resolution condition caused by the boundary terms is consistent with the volume terms: for both, $k$ and $H$, the power is reduced by $1/2$.
Unfortunately, despite this consistency, the part $k^{3/2}H^{1/2}$ is  the dominating part in the resolution condition and finally, leads to a condition like ``$k^3 H$ small''.

We emphasize that it is natural that $h$ enters the resolution condition because the third cell problem depends on $k$.
Note that $h$ denotes the mesh width of the unit square and is independent from $\delta$.
Our explicit stability estimate in Theorem \ref{thm:stabilityeff} yields $q=3$ and thus, a kind of ``worst case'' resolution condition: It is certainly sufficient for well-posedness and quasi-optimality, but may well be sub-optimal for most frequencies $k$, since in particular the influence from $\Im(\mu_{\hom})$ may be overestimated.
This has been discussed in detail and examined in the numerical experiment for the Helmholtz equation in \cite{OV16hmmhelmholtz}.
We emphasize that the resolution condition can be improved if better stability results are known, which is outside the scope of this work.
Moreover, we underline that previous works \cite{GM12maxwellimpedance, Hipt02FEem, Hipt15maxwellcontdiscr, Monk} so far have only proved well-posedness for sufficiently fine meshes without explicit $k$-dependent resolution condition.

Furthermore, we note that the resolution condition may be reduced, which has been extensively studied for the Helmholtz equation.
For Maxwell's equations, developments in that direction include (hybridizable) discontinuous Galerkin methods \cite{FW14dgmaxwell, FLX16hdgmaxwell, LCQ17hdgmaxwell} or (plane wave) Trefftz methods \cite{HMP13trefftzmaxwell}, just to name a few.
Also the Localized Orthogonal Decomposition (LOD) \cite{MP14LOD, Pet15LODreview} has shown promising results for the Helmholtz equation in \cite{GP15scatteringPG, P17LODhelmholtz}.
Only recently, it has been discussed for elliptic $\VH(\curl)$-problems \cite{GHV17lodmaxwell}.
The definition of the HMM as direct diescretization of the two-scale equation makes an additional application of the LOD possible, see \cite{OV16a} for Helmholtz-type problems.

As already remarked in \cite{Ohl05HMM, HOV15maxwellHMM, OV16hmmhelmholtz}, the definition of the HMM as direct discretization of the two-scale equation is the crucial starting point for the proofs of the a priori error estimates.
In particular, it also enables the derivation of a posteriori error estimates.

\section{Main proofs}
\label{sec:proofs}
In this section all essential proofs on the two-scale equation, the stability of the homogenized equation and the numerical analysis of the HMM are given.

\subsection{Proof of the two-scale equation}
\label{subsec:homproof}
In this section, we show the two-scale equation \eqref{eq:twoscaleeq}.
It closely follows \cite{BBF15hommaxwell} and mainly differs in the form of the two-scale convergence, so that we will focus on that part.

\begin{proof}[Proof of Theorem \ref{thm:twoscaleeq}]
\emph{First step: A priori bounds. }
Assume that $\Vu_\delta$ is uniformly bounded in $L^2(G)$.
We then easily deduce that $\sqrt{|\varepsilon^{-1}_\delta|}\curl\Vu_\delta$ is also uniformly bounded in $L^2(G)$.

\emph{Second step: two-scale convergences. }
By the a priori bounds, $\Vu_\delta$ converges weakly in $\VH(\curl, G\setminus\overline{\Omega})$ to some $\Vu$.
Using \cite[Prop.\ 7.1]{BBF15hommaxwell}, we deduce $\Vu\in \VH(\curl, G\setminus\overline{\Omega})$.
Since $G\setminus \overline{\Sigma}_\delta$ is a simply connected domain, the two-scale convergences from Wellander et al.\ \cite{Well2, Well3} and Visintin \cite{Visintin} can be applied (formally with the help of extension by zero in $\Sigma_\delta$):
There exist $\Vu\in \VH_{\mbox{\tiny{imp}}}(G)$, $\Vu_1\in L^2(\Omega; \widetilde{\VH}_{\sharp}(\curl, \Sigma^*))$, and $u_2\in L^2(\Omega; H^1_{\sharp, 0} (\Sigma^*))$ such that, up to a subsequence,
\begin{align*}
\chi_{G\setminus \Sigma_\delta} \Vu_\delta\twosc \chi_{\Sigma^*}(\Vu+\nabla_y u_2), \qquad \chi_{G\setminus\Sigma_\delta}\curl \Vu_\delta\twosc \chi_{\Sigma^*}(\curl \Vu +\curl_y \Vu_1).
\end{align*}

The uniform a priori bound of $\Vu_\delta$ furthermore imply that there is $\widetilde{\Vu}_0\in L^2(\Omega; H_\sharp(\curl, \Sigma))$ such that, up to a subsequence,
\[ \chi_{\Sigma_\delta} \Vu_\delta\twosc \chi_\Sigma\widetilde{\Vu}_0, \qquad \delta\chi_{\Sigma_\delta} \curl \Vu_\delta\twosc \chi_\Sigma \curl_y \widetilde{\Vu}_0,\]
cf.\ \cite{CC15hommaxwell}.
Using all these two-scale convergences, we can deduce for any $\Vpsi\in C^\infty_0(\Omega; C^\infty_\sharp(Y))$
\begin{align*}
\int_\Omega\int_\Sigma\curl_y \widetilde{\Vu}_0\cdot \Vpsi&\longleftarrow\int_\Omega\delta\curl\Vu_\delta\cdot \Vpsi\bigl(x, \frac{x}{\delta}\bigr)\\*
&\qquad\quad=\int_\Omega\delta\Vu_\delta\cdot \curl_y\Vpsi\bigl(x, \frac{x}{\delta}\bigr)\longrightarrow\int_\Omega\int_Y\curl_y\Vpsi\cdot(\chi_\Sigma\widetilde{\Vu}_0+\chi_{\Sigma^*}(\Vu+\nabla_y u_2)).
\end{align*}
Integrating now by parts on the right-hand side, we derive the continuity of the tangential traces over $\partial\Sigma$, i.e.\
\begin{align*}
\int_\Omega\int_{\partial \Sigma}\widetilde{\Vu}_0\times \Vn \cdot \Vpsi =\int_\Omega\int_{\partial \Sigma} (\Vu +\nabla_y u_2)\times \Vn\cdot \Vpsi \quad \forall \Vpsi\in C_0^\infty(\Omega; C^\infty_\sharp(Y)).
\end{align*}
Therefore, there exists $\Vu_3\in L^2(\Omega; \VH_0(\curl, \Sigma))$ such that 
\[ \Vu_\delta\twosc \Vu+\chi_{\Sigma^*}\nabla_y u_2 +\chi_\Sigma \Vu_3.\]

\emph{Third step: two-scale equation and uniqueness. }
The two-scale equation follows now from the two-scale limits by inserting a test function of the form $\Vpsi(x)+\delta \Vpsi_1(x, \frac{x}{\delta})+\nabla_y \psi(x, \frac{x}{\delta})+\Vpsi_3(x, \frac{x}{\delta})$ with smooth and periodic (in the second variable $y$) functions $\Vpsi_i$ and  with $\Vpsi_3(\cdot, y)=0$ for $y\in \Sigma^*$ and $\nabla_y \psi_2(\cdot, y)=0$ for $y\in \Sigma$ into \eqref{eq:scatteringweak}.
Uniqueness of this problem can either be derived by the uniqueness of the effective equation (see Theorem \ref{thm:effectiveeq}) or by inserting appropriate test functions.

\emph{Fourth step: $L^2(G)$ bound on $\Vu_\delta$. }
Finally, the assumption that $\Vu_\delta$ is uniformly bounded in $L^2(G)$ is proved by a contradiction argument, for details we refer to \cite{BBF15hommaxwell}.
Note that we cannot argue in the same way as for Helmholtz problems in \cite{BF04homhelmholtz, OV16hmmhelmholtz} since weak convergence in $\VH(\curl)$ does {\itshape not} imply strong convergence in $L^2$.
\end{proof}

\subsection{Stability of the Maxwell scattering problem}
\label{subsec:stabilproof}
This section is devoted to a detailed proof of Theorem \ref{thm:stabilityeff}.
First, we show the (general) stability result for real- and matrix-valued Lipschitz coefficients, Proposition \ref{prop:stabilitylipschitz}.
The discontinuity in $(\varepsilon^{-1})_{\hom}$ is then accounted for by an approximation procedure, while the partly complex $\mu_{\hom}$ can be treated more directly.

The proof uses Rellich-Morawetz identities for Maxwell's equations, see \cite{Moiola} for the constant coefficient case.
For our Lipschitz continuous coefficients, we have the following result.

\begin{lemma}
Let $G$ be an open, bounded domain, which is star-shaped w.r.t.\ a ball centered at the origin.
Let $A, B\in W^{1, \infty}(G)$  be symmetric positive definite such that $DA\cdot \Vx$ is negative semi-definite, $DB\cdot \Vx$ is positive semi-definite and that $A=\alpha\Id$ and $B=\beta\Id$ in a neighborhood of the boundary $\partial G$.
\begin{itemize}
\item If $\Vxi\in \VH(\Div, G)$ with $\curl(A\Vxi)\in L^2(G)$ and $\Vxi_T\in L^2_T(\partial G)$, then
\begin{equation}
\label{eq:curlRellich3}
\|A^{1/2}\Vxi\|^2_{L^2(G)}\leq 2\Bigl|\int_G\curl(A\Vxi)\cdot (\Vxi^*\times \Vx)+(A\Vxi\cdot \Vx)\Div\Vxi^*\Bigr|+C(G)\int_{\partial G}\alpha|\Vxi_T|^2.
\end{equation}
\item If $\Vxi\in \VH_{\mbox{\tiny{imp}}}(G)$ with $\Div(B\Vxi)\in L^2(G)$, then
\begin{equation}
\label{eq:idRellich3}
\|B^{1/2}\Vxi\|^2_{L^2(G)}\leq 2\Bigl|\int_G\curl\Vxi^*\cdot (B\Vxi\times \Vx)+(\Vxi^*\cdot \Vx)\Div(B\Vxi)\Bigr|+C(G)\int_{\partial G}\beta|\Vxi_T|^2.
\end{equation}
\end{itemize}
\end{lemma}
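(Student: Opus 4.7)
The plan is to prove both bounds via Rellich--Morawetz identities. For \eqref{eq:curlRellich3} the natural multiplier is $\Vxi^*\times\Vx$; for \eqref{eq:idRellich3} it is $B\Vxi\times\Vx$. In the constant-coefficient case these are classical (see \cite{Moiola, HMP11stabilityMaxwell}), and what is new here is the absorption of a commutator with the Lipschitz coefficient, for which the sign hypotheses on $DA\cdot\Vx$ and $DB\cdot\Vx$ are exactly designed.

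For \eqref{eq:curlRellich3} I would first reduce, by a standard density argument, to smooth $\Vxi$ and a mollification of $A$; the final estimate does not involve derivatives of $A$, so the limit is painless. The main computational step is the algebraic identity
\[
\curl(\Vxi^*\times\Vx)=2\Vxi^*-(\Div\Vxi^*)\Vx+(\Vx\cdot\nabla)\Vxi^*,
\]
which, combined with Green's identity applied to $\int_G\curl(A\Vxi)\cdot(\Vxi^*\times\Vx)$, produces the desired $\int A\Vxi\cdot\Vxi^*$ and $(A\Vxi\cdot\Vx)\Div\Vxi^*$ contributions together with the delicate directional derivative $2\Re\int A\Vxi\cdot(\Vx\cdot\nabla)\Vxi^*$. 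I would then integrate the latter by parts once more in $x_j\partial_j$, symmetrising via the symmetry of $A$, to obtain
\[
2\Re\int_G A\Vxi\cdot(\Vx\cdot\nabla)\Vxi^* = -\Re\int_G(DA\cdot\Vx)\Vxi\cdot\Vxi^* - 3\int_G A\Vxi\cdot\Vxi^* + \int_{\partial G}\alpha|\Vxi|^2(\Vx\cdot\Vn).
\]
Combining everything yields an identity of the form $\int A\Vxi\cdot\Vxi^* = 2\Re\{\text{volume terms}\} + \Re\int(DA\cdot\Vx)\Vxi\cdot\Vxi^* - \mathrm{BT}$; negative semi-definiteness of $DA\cdot\Vx$ lets me drop the commutator when passing to the upper bound.

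The main obstacle is then to bound $-\mathrm{BT}$ by $C(G)\int_{\partial G}\alpha|\Vxi_T|^2$ alone. Splitting $|\Vxi|^2=|\Vxi_T|^2+|\Vxi\cdot\Vn|^2$ and expanding $(\Vn\times\Vxi)\cdot(\Vxi^*\times\Vx)$ via Lagrange's identity, the problematic contributions collapse to $\alpha(\Vx\cdot\Vn)|\Vxi\cdot\Vn|^2 + 2\alpha\Re[(\Vn\cdot\Vxi^*)(\Vxi_T\cdot\Vx_T)]$; this is genuinely delicate because $\Vxi\cdot\Vn$ sits only in $H^{-1/2}(\partial G)$ and cannot be estimated in $L^2$ directly. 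I would complete the square, exploiting $\Vx\cdot\Vn\geq\gamma>0$, to rewrite it as
\[
(\Vx\cdot\Vn)\Bigl|\Vxi\cdot\Vn+\tfrac{\Vxi_T\cdot\Vx_T}{\Vx\cdot\Vn}\Bigr|^2-\tfrac{|\Vxi_T\cdot\Vx_T|^2}{\Vx\cdot\Vn};
\]
the perfect square has the correct sign to be dropped, and the remainder is controlled by $\|\Vx\|_\infty^2\gamma^{-1}\int_{\partial G}\alpha|\Vxi_T|^2$, which produces the stated constant $C(G)$.

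For \eqref{eq:idRellich3} I would follow the same program with multiplier $B\Vxi\times\Vx$. The algebraic identity becomes $\curl(B\Vxi\times\Vx)=2B\Vxi-(\Div(B\Vxi))\Vx+(\Vx\cdot\nabla)(B\Vxi)$ (explaining the hypothesis $\Div(B\Vxi)\in L^2$), the curl under the integral is now $\curl\Vxi^*$ rather than $\curl(A\Vxi^*)$, and the Lipschitz commutator appears with the opposite sign -- which is exactly why $DB\cdot\Vx\geq 0$ is the correct hypothesis. The boundary analysis is identical with $(\alpha,A)$ replaced by $(\beta,B)$.
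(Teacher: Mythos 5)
Your proposal is correct and follows essentially the same route as the paper's proof: the same Rellich--Morawetz multipliers $\Vxi^*\times\Vx$ and $B\Vxi\times\Vx$, the same commutator identity for $2\Re\int_G A\Vxi\cdot(\Vx\cdot\nabla)\Vxi^*$ producing the $(DA\cdot\Vx)$ resp.\ $(DB\cdot\Vx)$ terms with exactly the signs you state, and the same boundary treatment, since your completion of the square with weight $\Vx\cdot\Vn$ is precisely the paper's Young inequality with that weight applied to the cross term $2\Re\int_{\partial G}\alpha(\Vxi_T\cdot\Vx_T)(\Vxi^*\cdot\Vn)$. Both arguments likewise conclude by approximating $A$ and $\Vxi$ with smooth fields, so there is no substantive difference; your explicit remark that $\Vxi\cdot\Vn$ lives only in $H^{-1/2}(\partial G)$ and is therefore handled through the density step is a welcome clarification of a point the paper leaves implicit.
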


\begin{proof}
We only prove \eqref{eq:curlRellich3}, the procedure for \eqref{eq:idRellich3} is similar.

\emph{First step: } Assuming that $A$ and $\Vxi$ are $C^1$, we derive the point-wise identity
\begin{equation}
\label{eq:curlRellich1}
\begin{split}
2\Re\bigl(\curl(A\Vxi)\cdot (\Vxi^*\times \Vx)\bigr)&=2\Re\bigl(\Div((A\Vxi\cdot \Vx)\Vxi^*)-(A\Vxi\cdot \Vx)\,\Div\Vxi^*\bigr)\\
&\quad-\Div((A\Vxi\cdot \Vxi^*)\, \Vx)+A\Vxi\cdot \Vxi^*-(DA\cdot \Vx)\Vxi\cdot \Vxi^*,
\end{split}
\end{equation}
This is a direct computation using product rules for $\curl(\Va\times \Vb)$, $\Div(\Va\times \Vb)$, the vector calculus identity $\Va\times (\Vb\times \Vc)=(\Va\cdot \Vc)\Vb-(\Va\cdot \Vb)\Vc$ and
\[2\Re(A\Vxi\cdot (\Vx\cdot \nabla)\Vxi^*)=\Vx\cdot \nabla(A\Vxi\cdot \Vxi^*)-(DA\cdot \Vx)\Vxi\cdot \Vxi^*=\Div((A\Vxi\cdot \Vxi^*)\Vx)-3A\Vxi \cdot \Vxi^*-(DA\cdot \Vx)\Vxi\cdot \Vxi^*.\] 

\emph{Second step: } We then integrate \eqref{eq:curlRellich1} over $G$ with partial integration in the divergence-terms.
Splitting the vector $\Vxi$ in its tangential and normal components, $\Vxi_T$ and $\Vxi_N$, respectively, and using their orthogonality, we obtain
\begin{equation}
\label{eq:curlRellich2}
\begin{split}
&\!\!\!\!\int_G A\Vxi\cdot \Vxi^*-(DA\cdot \Vx)\Vxi\cdot \Vxi^*\\
&=2\Re\Bigl(\int_G\curl(A\Vxi)\cdot (\Vxi^*\times \Vx))+(A\Vxi\cdot \Vx)\, \Div\Vxi^*\Bigr)-2\Re\int_{\partial G}((A\Vxi)_T\cdot \Vx_T)\, (\Vxi^*\cdot \Vn)\\
&\quad+\Re\int_{\partial G}((A\Vxi)_T\cdot \Vv^*_T-(A\Vxi)_N\cdot \Vxi_N^*)\, (\Vx\cdot \Vn).
\end{split}
\end{equation}

\emph{Third step: } Using the assumptions of this lemma in \eqref{eq:curlRellich2} gives
\begin{align*}
\|A^{1/2}\Vxi\|^2_{L^2(G)}&\leq 2\Bigl|\int_G\curl(A\Vxi)\cdot (\Vxi^*\times \Vx)+(A\Vxi\cdot \Vx)\Div\Vxi^*\Bigr|\\
&\quad+\int_{\partial G}\alpha(|\Vxi_T|^2-|\Vxi_N|^2)(\Vx\cdot \Vn)-2\Re\int_{\partial G}\alpha(\Vxi_T\cdot \Vx_T)(\Vxi^*\cdot \Vn^*).
\end{align*}
Now we employ Young's inequality with weight $\Vx\cdot \Vn$ to the last term and obtain
\begin{align*}
\|A^{1/2}\Vxi\|^2_{L^2(G)}&\leq 2\Bigl|\int_G\curl(A\Vxi)\cdot (\Vxi^*\times \Vx)+(A\Vxi\cdot \Vx)\Div\Vxi^*\Bigr|+\int_{\partial G}\alpha|\Vxi_T|^2|\Vx|^2,
\end{align*}
which directly yields \eqref{eq:curlRellich3}.
The claim can now be obtained by approximating $A$ and $\Vxi$ with sufficiently smooth fields.
\end{proof}
For this lemma it is essential that $A$ and $B$ reduce to scalar values near the boundary because otherwise no connection between $(A\Vxi)_T$ and $\Vxi_T$ etc.\ can be drawn.
The previous lemma eliminated all terms with normal components on the boundary, which is necessary in order to apply it to functions in $\VH_{\mathrm{imp}}$.
In other words, we do not have any knowledge about $\Vv_N$ on $\partial G$ for the solution $\Vv$ to \eqref{eq:scatteringweakstabil}.

\begin{proof}[Proof of Proposition \ref{prop:stabilitylipschitz}]
We test \eqref{eq:scatteringweakstabil} with $\Vpsi=\Vv$ and take the imaginary part to obtain
\begin{equation}
\label{eq:estimatebdry}
k\|\Vv_T\|^2_{L^2(\partial G)}\leq C(\|\Vf\|_{L^2(G)}\|\Vv\|_{L^2(G)}+k^{-1}\|\Vg\|^2_{L^2(\partial G)})
\end{equation}
with a constant independent of $k$.
Next, we observe that by testing with $\nabla \phi$ for $\phi\in H^1(G)$ and constant on $\partial G$, we deduce $\Div(B\Vv)=0$.
We now apply \eqref{eq:curlRellich3} with $\xi=\curl\Vv$ and \eqref{eq:idRellich3} with $\xi=\Vv$ and obtain
\begin{align*}
&\!\!\!\!\|\Vv\|^2_{\curl, k, G}\\
&\leq 2\Bigl|\int_G\curl(A\Vv)\cdot (\curl\Vv^*\times \Vx)+k^2\curl\Vv^*\cdot (B\Vv\times \Vx)\bigr|+C\int_{\partial G}\alpha |\curl\Vv_T|^2+\beta k^2|\Vv_T|^2\\
&= 2\Bigl|\int_G\curl(A\Vv)\cdot (\curl\Vv^*\times \Vx)-k^2B\Vv\cdot (\curl\Vv^*\times \Vx)\bigr|+C\int_{\partial G}\alpha |\curl\Vv_T|^2+\beta k^2|\Vv_T|^2\\
&\leq 2\Bigl|\int_G \Vf\cdot (\curl\Vv^*\times \Vx)\Bigr|+C\int_{\partial G}k^2 |\Vv_T|^2+|\Vg|^2,
\end{align*}
where we used (the strong form of) the PDE and the boundary condition. 
Inserting H\"older's and Young's inequality for the first term on the right-hand side, we deduce
\[\|\Vv\|_{\curl, k, G}^2\leq C(\|\Vf\|^2_{L^2(G)}+\|\Vg\|^2_{L^2(\partial G)}+k^2\|\Vv_T\|^2_{L^2(\partial G)}).\]
Now plugging in \eqref{eq:estimatebdry} and using once more Young's inequality we finally obtain the asserted estimate \eqref{eq:stability}.
\end{proof}

The presented proof thus generalizes the result of \cite{Moiola} to a wider class of non-constant coefficients.

\begin{proof}[Proof of Theorem \ref{thm:stabilityeff}]
Let $\widetilde{\Vu}\in\VH_{\mathrm{imp}}$ be the solution to \eqref{eq:effectiveeq} with $\mu_{\hom}$ replaced by $\tilde{\mu}=\Id$ on all of $G$.
Using the higher regularity of $\widetilde{\Vu}$ (see Proposition \ref{prop:regscattering}), an approximation argument for $(\varepsilon^{-1})_{\hom}$, similar to \cite{OV16hmmhelmholtz}, gives the following stability
\[\|\widetilde{\Vu}\|_{\mathrm{imp}, k, G}\lesssim \|\Vf\|_{L^2(G)}+\|\Vg\|_{L^2(\partial G)}+k^{-1}\|\Vg\|_{\VH^{s_g}(\partial G)}.\]
This also implies that the inf-sup-constant behaves like $k^{-1}$, so that the above stability estimate holds also for $\tilde{\Vf}\in L^2(G)$ without the divergence-free constraint.

The difference function $\Vu-\widetilde{\Vu}$ solves \eqref{eq:effectiveeq} with $\mu_{\hom}$ replaced by $\tilde{\mu}$ and right-hand side (volume term) $k^2(\tilde{\mu}-\mu_{\hom})\Vu\in L^2(G)$.
Note that the right-hand side vanishes outside $\Omega$.
Hence, the previous arguments together with the triangle inequality yield
\[\|\Vu\|_{\mathrm{imp}, k, G}\lesssim \|\Vf\|_{L^2(G)}+\|\Vg\|_{L^2(\partial G)}+k^{-1}\|\Vg\|_{\VH^{s_g}(\partial G)}+k^2\|\Vu\|_{L^2(\Omega)}.\]

It thus remains to bound $\|\Vu\|_{L^2(\Omega)}$.
Inserting $\Vpsi=\Vu$ into \eqref{eq:effectiveeq} and considering the imaginary part gives
\[k^2c_0\|\Vu\|^2_{L^2(\Omega)}\lesssim k^{-1}\|\Vg\|^2_{L^2(\partial G)}+k^{-2}c_0^{-1}\|\Vf\|^2_{L^2(\Omega)}+\|\Vf\|_{L^2(G\setminus\overline{\Omega})}\|\Vu\|_{L^2(G\setminus\overline{\Omega})},\]
where $c_0$ denotes the lower bound on $\Im(\mu_{\hom})$.
Together with Young's inequality and the foregoing estimates this finally gives 
\[\|\Vu\|_{\mathrm{imp}, k, G}\lesssim c_0^{-1}\|\Vf\|_{L^2(\Omega)}+k c_0^{-1}\|\Vf\|_{L^2(G\setminus \overline{\Omega})}+k^{1/2}c_0^{-1/2}\|\Vg\|_{L^2(\partial G)}+k^{-1}\|\Vg\|_{\VH^{s_g}(\partial G)}.\]
Setting $c_0=k^{-2}$ according to Proposition \ref{prop:effective} finishes the proof.
\end{proof}

The proof shows that if the lower bound $c_0$ on $\Im(\mu_{\hom})$ is independent of $k$, we get the improved stability estimate
\[\|\Vu\|_{\mathrm{imp}, k, G}\lesssim \|\Vf\|_{L^2(\Omega)}+k\|\Vf\|_{L^2(G\setminus \overline{\Omega})}+k^{1/2}\|\Vg\|_{L^2(\partial G)}+k^{-1}\|\Vg\|_{\VH^{s_g}(\partial G)}.\]

\subsection{Proofs concerning the HMM}
\label{subsec:hmmproof}
In this section, we prove our central results, namely Theorems \ref{thm:infsupcond} and \ref{thm:dualestimates}.

We introduce the following dual problem: 
For $\Vf\in \VH(\Div, G)$ and $\Vf_3\in L^2(\Omega; \VH(\Div, \Sigma))$ with $\Div\Vf=0$ and $\Div_y \Vf_3=0$, find $\underline{\Vw}=(\Vw, \Vw_1, w_2, \Vw_3)\in \CH$ such that
\begin{equation}
\label{eq:dualproblem1}
\CB(\underline{\Vpsi}, \underline{\Vw})=\int_G\int_Y(\Vf+\chi_\Sigma\Vf_3)\cdot (\Vpsi+\chi_\Sigma\Vpsi_3)^*\qquad \forall \underline{\Vpsi}=(\Vpsi, \Vpsi_1, \psi_2, \Vpsi_3)\in\CH.
\end{equation}
Dual problem \eqref{eq:dualproblem1} is very similar to the two-scale limit equation \eqref{eq:twoscaleeq} and we thereby know that it is uniquely solvable.
Note that we can also apply our theory from Section \ref{sec:analysis}, in particular Assumption \ref{ass:polstable}, since the right-hand side is divergence-free.
We have the following approximation result for the dual problem.

\begin{lemma}
\label{lem:adjapprox}
Under Assumption \ref{ass:polstable}, the solution $\underline{\Vw}\in \CH$ to \eqref{eq:dualproblem1} satisfies
\begin{equation}
\label{eq:adjapprox}
\begin{split}
\inf_{\Vw_{H,h}\in \VV_{H,h}}\|\underline{\Vw}-\Vw_{H,h}\|_e&\leq C_{\appr} \bigl(k^{q+1}(H^s+h^{t_1}+h^{t_2}+ h^{t_3})\\
&\qquad\qquad\quad+k^{q+1/2}H^{s-1/2}\bigr)\|\Vf+\chi_\Sigma\Vf_3\|_{L^2(G\times Y)}.
\end{split}
\end{equation}
\end{lemma}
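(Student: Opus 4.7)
The strategy is to take as admissible candidate $\Vw_{H,h}\in\VV_{H,h}$ the natural Nédélec/Lagrange interpolants of $\underline{\Vw}=(\Vw,\Vw_1,w_2,\Vw_3)$ in the HMM spaces, and then to combine standard componentwise interpolation estimates with the regularity of $\underline{\Vw}$ in terms of the data $\Vf+\chi_\Sigma\Vf_3$. Since this source is divergence-free in $G\times Y$, the adjoint of the effective problem inherits the stability of Assumption \ref{ass:polstable}, so that $\|\Vw\|_{\mathrm{imp};k;G}\lesssim k^{q}\|\Vf+\chi_\Sigma\Vf_3\|_{L^2(G\times Y)}$. Chaining this with Proposition \ref{prop:regscattering} (with zero boundary data) yields
\begin{equation*}
\|\curl\Vw\|_{\VH^{s}_{pw}(G)}+k\|\Vw\|_{\VH^{s}_{pw}(G)}\lesssim k^{q+1}\|\Vf+\chi_\Sigma\Vf_3\|_{L^2(G\times Y)},
\end{equation*}
and, analogously, Proposition \ref{prop:regcellandcavity} controls $\|\curl_y\Vw_1\|_{L^2(\Omega;\VH^{t_1})}$, $k\|w_2\|_{L^2(\Omega;H^{1+t_2})}$, as well as $\|\curl_y\Vw_3\|_{L^2(\Omega;\VH^{t_3})}+k\|\Vw_3\|_{L^2(\Omega;\VH^{t_3})}$, each by a multiple of $k^{q+1}\|\Vf+\chi_\Sigma\Vf_3\|_{L^2(G\times Y)}$.

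With $\Pi_H\Vw\in\VV_H$ the classical Nédélec interpolant, and at each macroscopic quadrature point the appropriate cell interpolants in $\widetilde{\VV}_h(\Sigma^*)$, $\widetilde W_h(\Sigma^*)$, $\VV_h(\Sigma)$ post-composed with the $S_H^0$ or $S_H^1$ macroscopic reconstruction prescribed by the quadrature, standard finite element approximation results give
\begin{equation*}
\|\curl(\Vw-\Pi_H\Vw)\|_{L^2(G)}+k\|\Vw-\Pi_H\Vw\|_{L^2(G)}\lesssim H^{s}\bigl(\|\curl\Vw\|_{\VH^{s}_{pw}(G)}+k\|\Vw\|_{\VH^{s}_{pw}(G)}\bigr),
\end{equation*}
contributing $k^{q+1}H^{s}$ to the two-scale energy norm. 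The analogous componentwise estimates for $\Vw_1,w_2,\Vw_3$, integrated in the macroscopic variable, contribute $k^{q+1}(h^{t_1}+h^{t_2}+h^{t_3})\|\Vf+\chi_\Sigma\Vf_3\|_{L^2(G\times Y)}$. The tangential boundary term requires a separate argument: a trace/interpolation estimate on $\partial G$ together with the boundary regularity \eqref{eq:regbdryscatter} gives
\begin{equation*}
k^{1/2}\|(\Vw-\Pi_H\Vw)_T\|_{L^2(\partial G)}\lesssim k^{1/2}H^{s-1/2}\|\Vw\|_{\VH^{s}_{pw}(G)}\lesssim k^{q+1/2}H^{s-1/2}\|\Vf+\chi_\Sigma\Vf_3\|_{L^2(G\times Y)},
\end{equation*}
where the shifted power $k^{q+1/2}$ arises because only $k\|\Vw\|_{\VH^{s}_{pw}(G)}$ (not $\|\Vw\|_{\VH^{s}_{pw}(G)}$ itself) is bounded by $k^{q+1}$. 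Assembling all contributions via the triangle inequality on the coupled expressions $\|\curl\Vv+\curl_y\Vv_1\|$ and $\|\Vv+\chi_{\Sigma^*}\nabla_y v_2+\chi_\Sigma\Vv_3\|$ appearing in \eqref{eq:energynorm} yields \eqref{eq:adjapprox}.

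The main obstacle I foresee is reconciling the cellwise interpolants with the $S_H^0$ and $S_H^1$ structure imposed on the slow variable by the quadrature rule: one needs $x\mapsto\Vw_i(x,\cdot)$ to depend sufficiently regularly on $x$ to obtain the full $H$-rate for the macroscopic part, whereas Proposition \ref{prop:regcellandcavity} only supplies regularity in $y$. The standard remedy in HMM analysis is to replace pointwise evaluation at the quadrature nodes by an $L^2(\Omega;\cdot)$-projection onto $S_H^0$ or $S_H^1$ and to exploit the Lipschitz dependence of the cell problems on their macroscopic data $\Vw$ and $\curl\Vw$, which follows from their coercivity; this transfers the $x$-regularity requirement to $\Vw$ and $\curl\Vw$, where it is supplied by Proposition \ref{prop:regscattering}.
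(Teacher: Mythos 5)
Your proof is correct and follows essentially the same route as the paper: componentwise interpolation and best-approximation estimates in $\VH_{\mathrm{imp}}$ (including the $k^{1/2}H^{s-1/2}$ boundary-trace term), combined with the regularity results of Propositions \ref{prop:regcellandcavity} and \ref{prop:regscattering}, the trace estimate \eqref{eq:regbdryscatter}, and the stability of Assumption \ref{ass:polstable}, with exactly the same bookkeeping of powers of $k$ (volume terms at $k^{q+1}$, boundary term at $k^{q+1/2}$). Your closing worry about the $S_H^0$/$S_H^1$ structure is moot in this setting: the infimum in \eqref{eq:adjapprox} ranges over $\VV_{H,h}$, whose corrector components are $L^2(\Omega;\cdot)$ in the slow variable by Definition \ref{def:hmmdiscrtwosc}, so no $x$-regularity of $\Vw_1$, $w_2$, $\Vw_3$ is required and your projection remedy is unnecessary --- the quadrature-induced $S_H^p$ structure only affects the data-approximation error, which the analysis sets aside by assuming constant $\varepsilon_0$, $\varepsilon_1$ so that $\CB_h=\CB$.
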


\begin{proof}
Interpolation estimates and best-approximation results in $\VH_{\mathrm{imp}}$, see \cite{EG15intpolbestapprox} and \cite{GM12maxwellimpedance}, yield
\begin{align*}
\inf_{\Vw_{H,h}\in \VV_{H,h}}\|\underline{\Vw}-\Vw_{H,h}\|_e&\lesssim (H^s+h^{t_1}+h^{t_2}+h^{t_3})\|\underline{\Vw}\|_{k, \CH^{s,t}}\\
&\quad+k^{1/2}H^{s-1/2}(\|\Vw_T\|_{\VH^s_\parallel(\partial G)}+\|\curl_{\partial G}\Vw_T\|_{L^2(\partial G)}),
\end{align*}
where we abbreviated by $\|\cdot\|_{k, \CH^{s,t}}$ the (weighted) higher order norms.
Inserting the regularity and stability results from Section \ref{sec:analysis} and using Assumption \ref{ass:polstable} finishes the proof.
\end{proof}

With these preliminaries, we can now prove the inf-sup-condition and the quasi-optimality of Theorem \ref{thm:infsupcond}.

\begin{proof}[Proof of Theorem \ref{thm:infsupcond}]
\emph{Proof of \eqref{eq:discretetwosceq}: }
Let $\Vv_{H,h}\in \VV_{H,h}$ be arbitrary and apply the Helmholtz decomposition to $\Vv_H=\Vz+\nabla \theta$ and $\Vv_{h,3}=\Vz_3+\nabla_y \theta_3$.
We write in short $\Vv_{H,h}=\underline{\Vz}+\nabla \Vtheta$ with $\underline{\Vz}=(\Vz, \Vv_{h,1}, 0, \Vz_3)$ and $\nabla \Vtheta:=(\nabla \theta, 0, \nabla_y v_{h,2}, \nabla_y \theta_3)$.
Let $\underline{\Vw}=(\Vw, \Vw_1, w_2, \Vz_3)\in \CH$ be the solution to dual problem \eqref{eq:dualproblem1} with right-hand side $C_gk^2(\Vz+\chi_\Sigma\Vz_3)$.
Let $\Vw_{H,h}$ be the best-approximation to $\underline{\Vw}$ in the two-scale energy norm $\|\cdot\|_e$.

Imitating the proof of the analytical inf-sup condition in Lemma \ref{lem:infsupconst}, we would like to choose the test function $F(\Vv_{H,h})+\Vw_{H,h}$.
Unfortunately, $F(\Vv_{H,h})$ is not discrete any more, so that we have to apply an additional interpolation operator. 
We choose the corresponding standard (nodal) interpolation operator for each of the single spaces of $\VV_{H,h}$ and call the resulting operator $I_{H,h}$.
Hence, we obtain
\begin{align*}
&\!\!\!\!\bigl|\CB(\Vv_{H,h}, I_{H,h}(F(\Vv_{H,h}))+\Vw_{H,h})\bigr|\\
&\geq\bigl|\CB(\Vv_{H,h}, F(\Vv_{H,h})+\underline{\Vw})\bigr|-\bigl|\CB(\Vv_{H,h}, (I_{H,h}-\mathrm{id})F(\Vv_{H,h})\bigr|-\bigl|\CB(\Vv_{H,h}, \Vw_{H,h}-\underline{\Vw})\bigr|.
\end{align*}
The first term can be estimated as
\begin{align*}
\bigl|\CB(\Vv_{H,h}, F(\Vv_{H,h})+\underline{\Vw})\bigr|&=\bigl|\CB(\Vv_{H,h}, F(\Vv_{H,h})+C_gk^2(\Vz+\chi_{\Sigma}\Vz_3, \Vv_H+\chi_{\Sigma}\Vv_{h,3})\bigr|\\
&=\bigl|\CB(\Vv_{H,h}, F(\Vv_{H,h})+C_gk^2\|\Vz+\chi_{\Sigma}\Vz_3\|^2_{L^2(G\times Y)}\bigr|\geq \gamma_{\mathrm{ell}}\|\Vv_{H,h}\|^2_e.
\end{align*}

Using the continuity of $\CB$ and Lemma \ref{lem:adjapprox}, we deduce for the third term
\begin{align*}
&\!\!\!\!\bigl|\CB(\Vv_{H,h}, \Vw_{H,h}-\underline{\Vw})\bigr|\\*
&\leq C_cC_{\appr}C_g(k^{q+2}(H^s+h^{t_1}+h^{t_2}+h^{t_3})+k^{q+3/2}H^{s-1/2})\|\Vv_{H,h}\|_e\, k\|\Vz+\chi_\Sigma\Vz_3\|_{L^2(G\times Y)}\\
&\leq C_cC_{\appr}C_g(k^{q+2}(H^s+h^{t_1}+h^{t_2}+h^{t_3})+k^{q+3/2}H^{s-1/2})\|\Vv_{H,h}\|^2_e,
\end{align*}
where we used the stability of the Helmholtz decomposition in the last step.

For the second term we note that $F(\Vv_{H,h})=2\underline{\Vz}-\Vv_{H,h}$.
It holds that $\curl(I_{H,h}-\mathrm{id})\underline{\Vz}=0$ because the nodal interpolation operator is a commuting projector and $\curl\underline{\Vz}=\curl\Vv_{H,h}$.
In particular, this means that the curl and the tangential trace of $\Vz+\chi_\Sigma\Vz_3$ are discrete functions, so that we can apply the modified interpolation estimates \cite[Lemmas 5.1 and 5.3]{GM12maxwellimpedance}.
This yields for the second term
\begin{align*}
&\!\!\!\!\bigl|\CB(\Vv_{H,h}, (I_{H,h}-\mathrm{id})F(\Vv_{H,h}))\bigr|\\
&\leq 2C_c\|\Vv_{H,h}\|_e\, \bigl(k\|(I_{H,h}-\mathrm{id})(\Vz+\chi_\Sigma\Vz_3)\|_{L^2(G\times Y)}+k^{1/2}\|(I_{H,h}-\mathrm{id})\Vz_T\|_{L^2(\partial G)}\bigr)\\
&\leq 2C_cC_{\appr}(k (H^s+h^{t_3})+k^{1/2}H^{s-1/2})\|\Vv_{H,h}\|_e,
\end{align*}
where we used the higher regularities of the decomposition from Remark \ref{rem:reghelmholtzdecomp}.
The second term thus is of lower order than the third term and can be absorbed in the latter because of $k\geq k_0$.
All in all, this gives
\begin{align*}
&\!\!\!\!\bigl|\CB(\Vv_{H,h}, I_{H,h}(F(\Vv_{H,h}))+\Vw_{H,h})\bigr|\\
&\geq (\gamma_{\mathrm{ell}}-C_cC_{\appr}(C_g+2)(k^{q+2}(H^s+h^{t_1}+h^{t_2}+h^{t_3})+ k^{q+3/2}H^{s-1/2}))\|\Vv_{H,h}\|^2_e\\
&\geq \gamma_{\mathrm{ell}}/2\|\Vv_{H,h}\|^2_e,
\end{align*}
where we used the resolution condition \eqref{eq:resolcond1} in the last step.

Furthermore, it holds -- with the same arguments as before -- that
\begin{align*}
&\!\!\!\!\|I_{H,h}(F(\Vv_{H,h}))+\Vw_{H,h}\|_e\\
&\leq \|F(\Vv_{H,h})\|_e+\|\underline{\Vw}\|_e+\|\underline{\Vw}-\Vw_{H,h}\|_e+\|(I_{H,h}-\mathrm{id})F(\Vv_{H,h})\|_e\\
&\leq (1+C_g C_{\stab, e}k^{q+1}+(C_g+2) C_{\appr}(k^{q+2}(H^s+h^{t_1}+h^{t_2}+h^{t_3})+k^{q+3/2}H^{s-1/2}))\|\Vv_{H,h}\|_e^2,
\end{align*}
which finishes the proof of the inf-sup-condition.

\emph{Proof of the quasi-optimality \eqref{eq:quasiopt}:}
Let $\Ve:=(e_0, e_1, e_2, e_3)$ and apply the Helmholtz decomposition \eqref{eq:decomp} to $e_0+\chi_\Sigma e_3=\Vz+\chi_\Sigma \Vz_3+\nabla \theta+\chi_\Sigma\nabla_y \theta_3$.
We write in short $\Ve=\underline{\Vz}+\nabla \Vtheta$ with $\underline{\Vz}=(\Vz, e_1, 0, \Vz_3)$ and $\nabla \Vtheta:=(\nabla \theta, 0, \nabla_y e_2, \nabla_y \theta_3)$.

Using the G{\aa}rding-type inequality \eqref{eq:gardingtwosc}, we have that
\begin{equation}
\label{eq:gaardingerror}
\begin{split}
\gamma_{\mathrm{ell}}\|\Ve\|^2_e&\leq |\CB(\Ve, F(\Ve)) +C_gk^2\|\Vz+\chi_\Sigma\Vz_3\|^2_{L^2(G\times Y)}|\leq |\CB(\Ve, \Ve)|+(C_g+2)k^2\|\Vz+\chi_{\Sigma}\Vz_3\|^2_{L^2(G\times Y)}.
\end{split}
\end{equation}

The main work is now to bound the second term.
For this, let $\underline{\Vw}\in \CH$ be the solution to dual problem \eqref{eq:dualproblem1} with right-hand side $\Vz+\chi_\Sigma\Vz_3$.
Because of the orthogonality in the Helmholtz decomposition of \eqref{eq:decomp} it holds that
\[k\|\Vz+\chi_\Sigma\Vz_3\|^2_{L^2(G\times Y)}=k(\Vz+\chi_\Sigma\Vz_3, e_0+\chi_\Sigma e_3)_{L^2(G\times Y)}=k\CB(\Ve, \underline{\Vw}).\]
Using Galerkin orthogonality and Lemma \ref{lem:adjapprox}, we obtain for any $\Vw_{H,h}\in \VV_{H,h}$ that
\begin{align*}
k\|(\Vz+\chi_\Sigma \Vz_3)\|^2_{L^2}&=k\CB(\Ve, \underline{\Vw})=k\CB(\Ve, \underline{\Vw}-\Vw_{H,h})\\*
&\leq C_cC_{\appr}(k^{q+2}(H^s+h^{t_1}+h^{t_2}+h^{t_3})+k^{q+3/2}H^{s-1/2})\|(\Vz +\chi_\Sigma \Vz_3)\|_{L^2}\|\Ve\|_e
\end{align*}
and thus
\begin{equation}
\label{eq:l2estimatez}
k\|(\Vz+\chi_\Sigma \Vz_3)\|_{L^2(G\times Y)}\leq C_c C_{\appr}(k^{q+2}(H^s+h^{t_1}+h^{t_2}+h^{t_3})+k^{q+3/2}H^{s-1/2})\|\Ve\|_e.
\end{equation}

Inserting \eqref{eq:l2estimatez} now into \eqref{eq:gaardingerror} and applying Galerkin orthogonality, we get
\begin{align*}
\gamma_{\mbox{\tiny{ell}}}\|\Ve\|_e^2&\leq \bigl|\CB(\Ve, \Ve)\bigr|+(C_g+2) k^2 \|\Vz+\chi_\Sigma\Vz_3\|^2_{L^2(G\times Y)}\\
&\leq\bigl|\CB(\Ve, \underline{\Vu}-\Vv_{H,h})\bigr|+(C_g+2)\|\Ve\|_e\, k\|\Vz+\chi_\Sigma\Vz_3\|_{L^2(G\times Y)}\\
&\leq C_c\|\Ve\|_e\|\underline{\Vu}-\Vv_{H,h}\|\\
&\qquad+(C_g+2) C_cC_{\appr}(k^{q+2}(H^s+h^{t_1}+h^{t_2}+ h^{t_3})+k^{q+3/2}H^{s-1/2})\|\Ve\|_e^2,
\end{align*}
which gives the claim using resolution condition \eqref{eq:resolcond1}.
\end{proof}

The proof of the quasi-optimality already showed that the compact perturbation is of higher order (with respect to the rates in the mesh size) than the energy error. 
This kind of Aubin-Nitsche trick can be extended to the whole Helmholtz decomposition.
\begin{proof}[Proof of Theorem \ref{thm:dualestimates}]
The estimate for $\Vz+\chi_\Sigma\Vz_3$ is already given by \eqref{eq:l2estimatez} (after dividing by $k$).
To estimate $\theta+\chi_\Sigma\theta_3$, we pose another dual problem (cf.\ \cite{HOV15maxwellHMM}):
Find $\Vw:=(w, w_2, w_3)\in \CS:=H^1_{\partial G}\times L^2(\Omega; H^1_{\sharp, 0}(\Sigma^*))\times L^2(\Omega; H^1_0(\Sigma))$ such that
\begin{align*}
\CA(\Vpsi, \Vw)&:=-k^2\int_G\int_y(\nabla \psi+\chi_{\Sigma^*}\nabla_y \psi_2+\chi_\Sigma\nabla_y \psi_3)\cdot (\nabla w+\chi_{\Sigma^*}\nabla_y w_2+\chi_\Sigma\nabla_y w_3)^*\\
&=\int_G\int_Y (\theta+\chi_\Sigma\theta_3)\cdot (\psi+\chi_\Sigma\psi_3)^*\qquad \forall \Vpsi=(\psi, \psi_2, \psi_3)\in \CS.
\end{align*}
Let us denote by $\Vw_{H,h}=(w_H, w_{h, 2}, w_{h, 3})$ the solution of the corresponding discrete problem over the Lagrange finite element spaces $W_H\subset H^1_{\partial G}$, $W_h(\Sigma^*)\subset H^1_{\sharp, 0}(\Sigma^*)$, and $W_h(\Sigma)\subset H^1_0(\Sigma)$.
It is a well-known fact of finite element exterior calculus that $\nabla W_H\subset \VH_{\mathrm{imp}}$, etc.
We obtain with the Galerkin orthogonality
\begin{align*}
\|\theta+\chi_\Sigma\theta_3\|^2_{L^2(G\times Y)}&=\CA((\theta, e_2, \theta_3), \Vw)=\CB((\nabla\theta, e_1, e_2, \nabla_y \theta_3), (\nabla w, 0, w_2, \nabla_y w_3))\\
&=\CB(\Ve, (\nabla w, 0, w_2, \nabla_y w_3))-\CB((\Vz, 0, 0, \Vz_3), (\nabla w, 0, w_2, \nabla_y w_3))\\
&=\CB(\Ve, (\nabla(w-w_H), 0, w_2-w_{h,2}, \nabla_y(w_3-w_{h,3}))\\*
&\quad-\CB((\Vz, 0, 0, \Vz_3), (\nabla w, 0, w_2, \nabla_y w_3)).
\end{align*}
Using the approximation properties of the Lagrange finite element spaces and the regularity and stability of elliptic diffusion two-scale problems, we deduce
\begin{align*}
\|\theta+\chi_\Sigma\theta_3\|^2_{L^2(G\times Y)}&\lesssim \|\Ve\|_e\, k \|\nabla(w-w_H)+\chi_{\Sigma^*}\nabla_y (w_2-w_{h,2})+\chi_\Sigma\nabla_y(w_3-w_{h, 3})\|_{L^2(G\times Y)}\\*
&\quad +k^2\|\Vz+\chi_\Sigma\Vz_3\|_{L^2(G\times Y)} \|\nabla w+\chi_{\Sigma^*}\nabla_y w_2+\chi_\Sigma\nabla_yw_3\|_{L^2(G\times Y)}\\
&\lesssim (H^s+h^{t_2}+h^{t_3})\|\Ve\|_e\|\theta+\chi_\Sigma\theta_3\|_{L^2(G\times Y)}\\*
&\quad +(k^{q+2}(H^s+h^{t_1}+h^{t_2}+h^{t_3})+k^{q+3/2}H^{s-1/2})\|\Ve\|_e\|\theta+\chi_\Sigma\theta_3\|_{L^2(G\times Y)},
\end{align*}
which in combination with \eqref{eq:l2estimatez} finishes the proof.
\end{proof}

\section{Numerical results}
\label{sec:experiment}
In this section we give some numerical results on the HMM with particular respect to the convergence order (see Theorem \ref{thm:infsupcond}, Corollary \ref{cor:errorestimates} and Theorem \ref{thm:dualestimates}) and the behavior for different frequencies $k$ and different values $\mu_{\hom}$.
The implementation was done with the module {\sffamily dune-gdt} \cite{wwwdunegdt} of the DUNE software framework \cite{BB+08dune1, BB+08dune2}.

We consider the macroscopic domain $G=(0,1)^3$ with embedded scatterer $\Omega=(0.25, 0.75)^3$.
The boundary condition $\Vg$ is computed as $\Vg=\curl\Vu_{\mathrm{inc}}\times \Vn-ik\Vn\times (\Vu_{\mathrm{inc}}\times \Vn)$ with the (left-going), $\Ve_2$-polarized incoming plane wave $\Vu_{\mathrm{inc}}=\exp(-ik x_1)\Ve_2$.
The unit cube $Y$ has the inclusion $\Sigma =(0.25, 0.75)^3$ and we choose  the inverse permittivities as $\varepsilon_0^{-1}=1.0$ and $\varepsilon_1^{-1}=1.0-0.01i$.
Obviously, the real parts of both parameters are of the same order and $\varepsilon_1$ is only slightly dissipative.

First, we analyze the dependency of the effective permeability $\mu_{\hom}$ on the wavenumber $k$.
The contribution to $\mu_{\hom}$ from the second cell problem \eqref{eq:cellproblem2} in $\Sigma^*$ is independent of $k$, as expected.
The wavenumber-dependency is wholly caused by cell problem \eqref{eq:cellproblem3} inside $\Sigma$.
As discussed also in \cite{BBF15hommaxwell} and for the two-dimensional case in \cite{BF04homhelmholtz, OV16hmmhelmholtz}, significant changes in $\mu_{\hom}$ are expected around the eigenvalues of the vector Laplacian.
Only some of the eigenvalues, namely those where the mean value of the eigenfunction(s) is not the zero vector, will eventually lead to resonances in the behavior of the effective permeability.
As $\Sigma$ is a cube, those eigenvalues are explicitly known and for our setup, the first interesting values are $k\approx 8.9$ and $k\approx 19.9$. 
We compute $\mu_{\hom}$ using cell problems \eqref{eq:cellproblem2} and \eqref{eq:cellproblem3} with a mesh consisting of $196,608$ elements on $Y$.
Figure \ref{fig:muhom} depicts the behavior of the diagonal entries of $\Re(\mu_{\hom})$ and $\Im (\mu_{\hom})$ (all three diagonal entries are the same due to symmetry) for changing $k$.
As predicted, we see a significant change of behavior around the eigenvalues, where the imaginary part has large values and the real part shows resonances.
For the first eigenvalue, this resonance is strong enough to produce a negative real part, while this is not the case for the second eigenvalue in our setup.

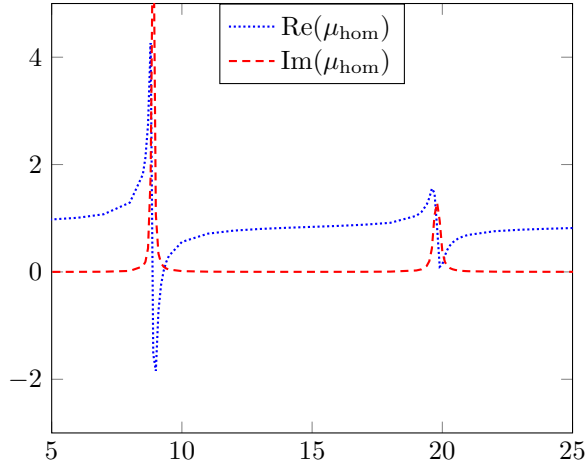
\begin{figure}
\begin{center}
\begin{tikzpicture}
\begin{axis}[axis equal image=false, legend entries={$\Re(\mu_{\hom})$, $\Im(\mu_{\hom})$}, xmin=5, xmax=25, ymin=-3, ymax=5, legend style={at={(0.5, 1.0)}, anchor=north}]
\addplot+[no markers, densely dotted, thick] table[x=omega, y= Re(mu), col sep=comma]{images/muhom.csv};
\addplot+[no markers, densely dashed, thick] table[x=omega, y= Im(mu), col sep=comma]{images/muhom.csv};
\end{axis}
\end{tikzpicture}
\end{center}
\caption{Dependence of the effective permeability $\mu_{\hom}$ on the wavenumber $k$ for square inclusion $\Sigma=(0.25, 0.75)^2$ with $\varepsilon_1^{-1}=1.0-0.01i$.}
\label{fig:muhom}
\end{figure}

We now take a closer look at the convergence of errors and verify the predictions of Theorem \ref{thm:infsupcond}/Corollary \ref{cor:errorestimates} and Theorem \ref{thm:dualestimates}.
We use a reference homogenized solution by computing the effective parameters with $196,608$ elements on $Y$ and the solving the effective homogenized equation \eqref{eq:effectiveeq} with these parameters using a mesh with $663,552$ elements for $G$.
This reference homogenized solution is compared to the macroscopic part $\Vu_H$ of the HMM-approximation on a sequence of simultaneously refined macro- and microscale meshes for the frequencies $k=9$ and $k=12$.
Note that $k=12$ corresponds to ``standard'' effective parameters, while for $k=9$, $\Re(\mu_{\hom})$ is negative definite.
The errors in the $L^2$ and $\VH(\curl)$-semi norm are shown in Table \ref{tab:homerror12} for $k=12$ and in Table \ref{tab:homerror9} for $k=9$.
In order to verify Theorem \ref{thm:dualestimates}, we compute an approximation of the gradient part $\theta$ of the Helmholtz decomposition:
We solve the Poisson problem determining $\theta$ (with right-hand side $e_0$) using linear Lagrange elements on the reference mesh (with $663,552$ elements).
The $L^2$ norms of this resulting $\theta$ are also shown in Tables \ref{tab:homerror12} and Table \ref{tab:homerror9}, respectively.
The experimental order of convergence (EOC), which is defined for two mesh sizes $H_1>H_2$ and the corresponding error values $e_{H_1}$ and $e_{H_2}$ as $EOC(e):=\ln(\frac{e_{H_1}}{e_{H_2}})/\ln(\frac{H_1}{H_2})$, verifies the linear convergence in $L^2 $ and $\VH(\curl)$, predicted in Theorem \ref{thm:infsupcond} and Corollary \ref{cor:errorestimates}, and the quadratic convergence of the Helmholtz decomposition, predicted in Theorem \ref{thm:dualestimates}.
Note that from the geometry one might expect a reduced regularity of the analytical solution and therefore, a sub-linear convergence of the $\VH(\curl)$-error.
We believe that the linear convergence observed in the experiment does not imply a sub-optimality of the error bound in Theorem \ref{thm:infsupcond}, but that in fact, the analytical homogenized solution in this special case has full $\VH^1_{pw}(\curl, G)$ regularity, probably because of the specific boundary condition.
This clearly shows that our general theory holds for all regimes of wavenumbers even if they result in unusual effective parameters.
This is consistent with the observations made for the two-dimensional case in \cite{OV16hmmhelmholtz}.

\begin{table}
\caption{Convergence history and EOC for the error between the macroscopic part $\Vu_H$ of the HMM approximation and the reference homogenized solution for $k=12$.}
\label{tab:homerror12}
\centering
\begin{tabular}{@{}ccccccc@{}}
\toprule
$H=h$&$\|e_0\|_{L^2(G)}$&$\|\curl e_0\|_{L^2(G)}$&$\|\theta\|_{L^2(G)}$&EOC($e_0$)&EOC($\curl e_0$)&EOC($\theta$)\\
\midrule
$\sqrt{3}\times 1/4$ & $0.945214$ & $11.6003$ & $0.01555$ & ---& --- & ---\\
$\sqrt{3}\times 1/8$ & $0.5316$ & $5.76452$ & $0.0096331$ &$0.8303$ & $1.0089$ & $0.6908$\\
$\sqrt{3}\times 1/12$ & $0.3211809$ & $3.36067$ & $0.00409982$ & $1.2379$ & $1.3308$ & $2.1069$\\
$\sqrt{3}\times 1/16$ & $0.230797$ & $2.38167$ & $0.00220056$ & $1.1555$ & $1.1969$ & $2.1629$\\
\bottomrule
\end{tabular}
\end{table}

\begin{table}
\caption{Convergence history and EOC for the error between the macroscopic part $\Vu_H$ of the HMM approximation and the reference homogenized solution for $k=9$.}
\label{tab:homerror9}
\centering
\begin{tabular}{@{}ccccccc@{}}
\toprule
$H=h$&$\|e_0\|_{L^2(G)}$&$\|\curl e_0\|_{L^2(G)}$&$\|\theta\|_{L^2(G)}$&EOC($e_0$)&EOC($\curl e_0$)&EOC($\theta$)\\
\midrule
$\sqrt{3}\times 1/4$ & $0.697211$ & $5.54104$ & $0.0242162$ & ---& --- & ---\\
$\sqrt{3}\times 1/8$ & $0.410991$ & $2.94379$ & $0.0104552$ &$0.7625$ & $0.9125$ & $1.2118$\\
$\sqrt{3}\times 1/12$ & $0.285927$ & $1.85786$ & $0.00574651$ & $0.8949$ & $1.1351$ & $1.4761$\\
$\sqrt{3}\times 1/16$ & $0.216505$ & $1.31478$ & $0.0033278$ & $0.9668$ & $1.2019$ & $1.8989$\\
\bottomrule
\end{tabular}
\end{table}

Finally, we compare the two frequencies $k=9$ and $k=12$ in more detail.
They have a different physical meaning: For $k=12$, normal transmission through the scatterer is expected, while $k=9$ corresponds to a wavenumber in the band gap due to the negative definite real part of $\mu_{\hom}$.
Thus, wave propagation through the scatterer is forbidden for $k=9$.
We consider the magnitude of the real part of $\Vu_H$ (the macroscopic part of the HMM-approximation with $H=h=\sqrt{3}\times 1/16$) and plot it in Figure \ref{fig:3dplots}.
The isosurfaces are almost parallel planes for $k=12$ indicating normal, almost undisturbed propagation of the wave through the scatterer.
Note that the effective wave speed inside the scatterer does not differ greatly from the one outside in our choice of material parameters.
In contrast, the scatterer has a significant influence on the wave propagation for $k=9$, as we can deduce from the distorted wavefronts in Figure \ref{fig:3dplots}, right.

\begin{figure}
\includegraphics[width=0.48\textwidth, trim= 65mm 5mm 30mm 10mm, clip=true, keepaspectratio=false]{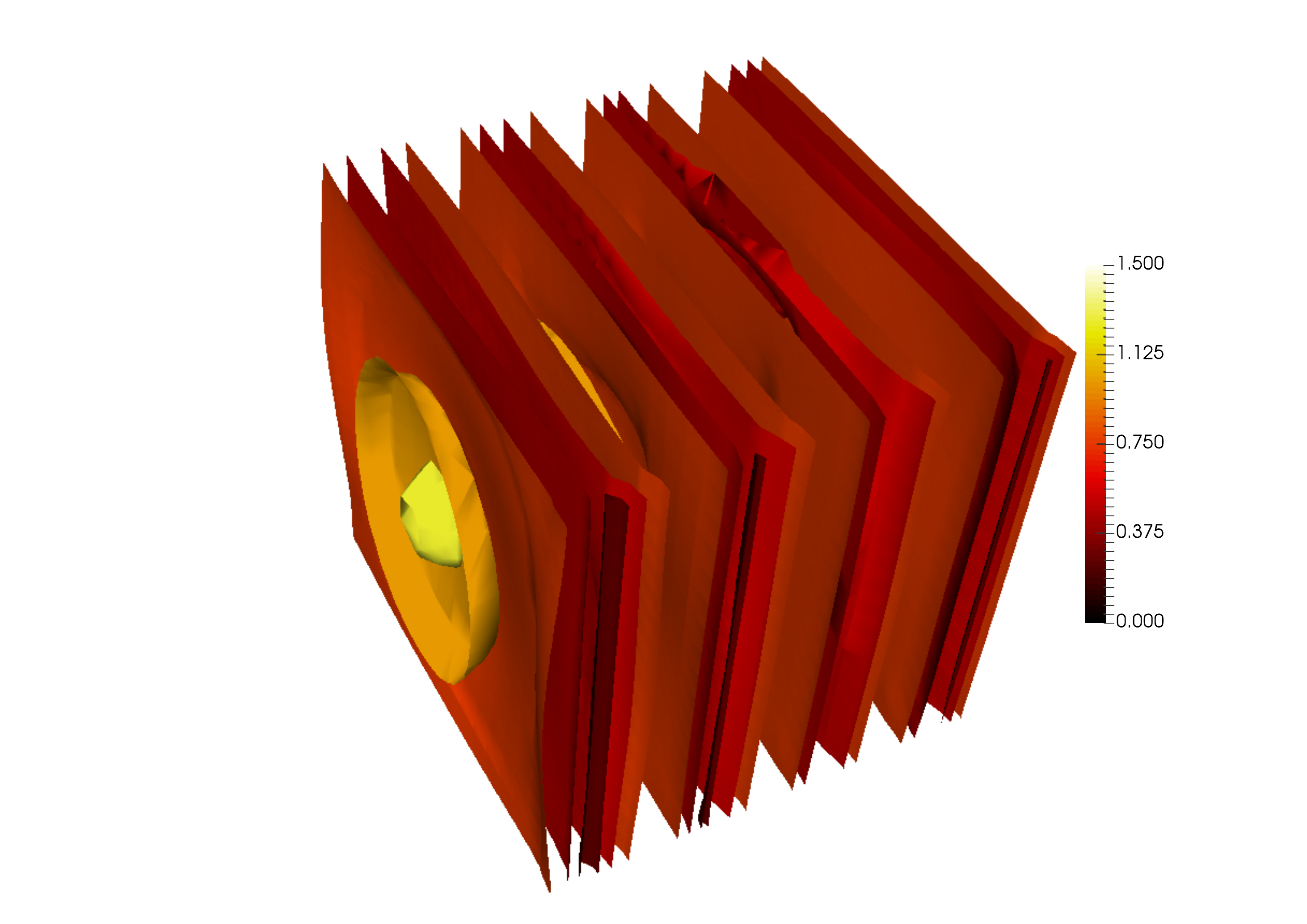}%
\hspace{0.5cm}%
\includegraphics[width=0.48\textwidth, trim= 65mm 5mm 30mm 10mm, clip=true, keepaspectratio=false]{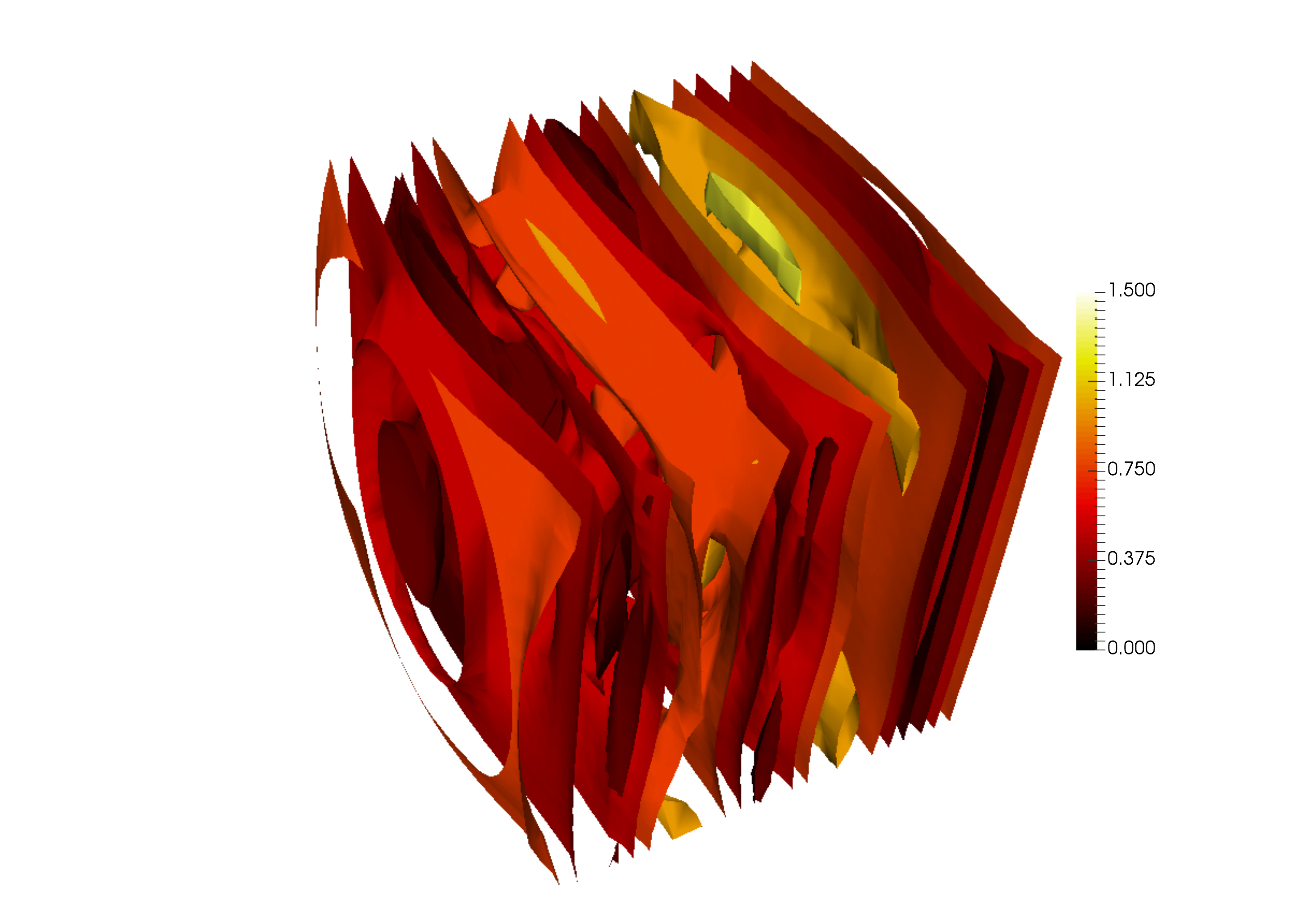}
\caption{Isosurfaces for the magnitude of $\Re(\Vu_H)$ for $k=12$ (left) and $k=9$ (right)}
\label{fig:3dplots}
\end{figure}

To compare this in more detail, we study two-dimensional representations in the plane $y=0.545$ in Figure \ref{fig:2dplots}.
There we depict the $x_2$-component, which is the principal one due to the polarization of the incoming wave.
The top row shows again the macroscopic part $\Vu_H$ of the HMM-approximation and we see the expected exponential decay of the amplitude inside the scatterer for $k=9$ (top right), while the amplitude is not affected for $k=12$.
The zeroth order approximation $\Vu^0_{\mathrm{HMM}}:=\Vu_H+\nabla_yu_{h,2}(\cdot, \frac{\cdot}{\delta})+\Vu_{h,3}(\cdot, \frac{\cdot}{\delta})$ in the bottom row of Figure \ref{fig:2dplots} explains this effect.
The (resonant) amplitudes inside the inclusions are much higher for $k=9$ than for $k=12$.
Wavenumber $k=9$ almost coincides with the eigen resonance of the inclusions, which explains the high amplitudes. This implies that a lot of the waves' energy is confined to the inclusions and thus the wave amplitude is decaying throughout the scatterer.
In contrast for the wavenumber $k=12$ the higher amplitudes inside the inclusions are solely due to the different material parameters and do not trigger any resonances, so that the overall wave propagation remains undisturbed.

\begin{figure}
\includegraphics[width=0.48\textwidth, trim= 75mm 33mm 51mm 32mm, clip=true, keepaspectratio=false]{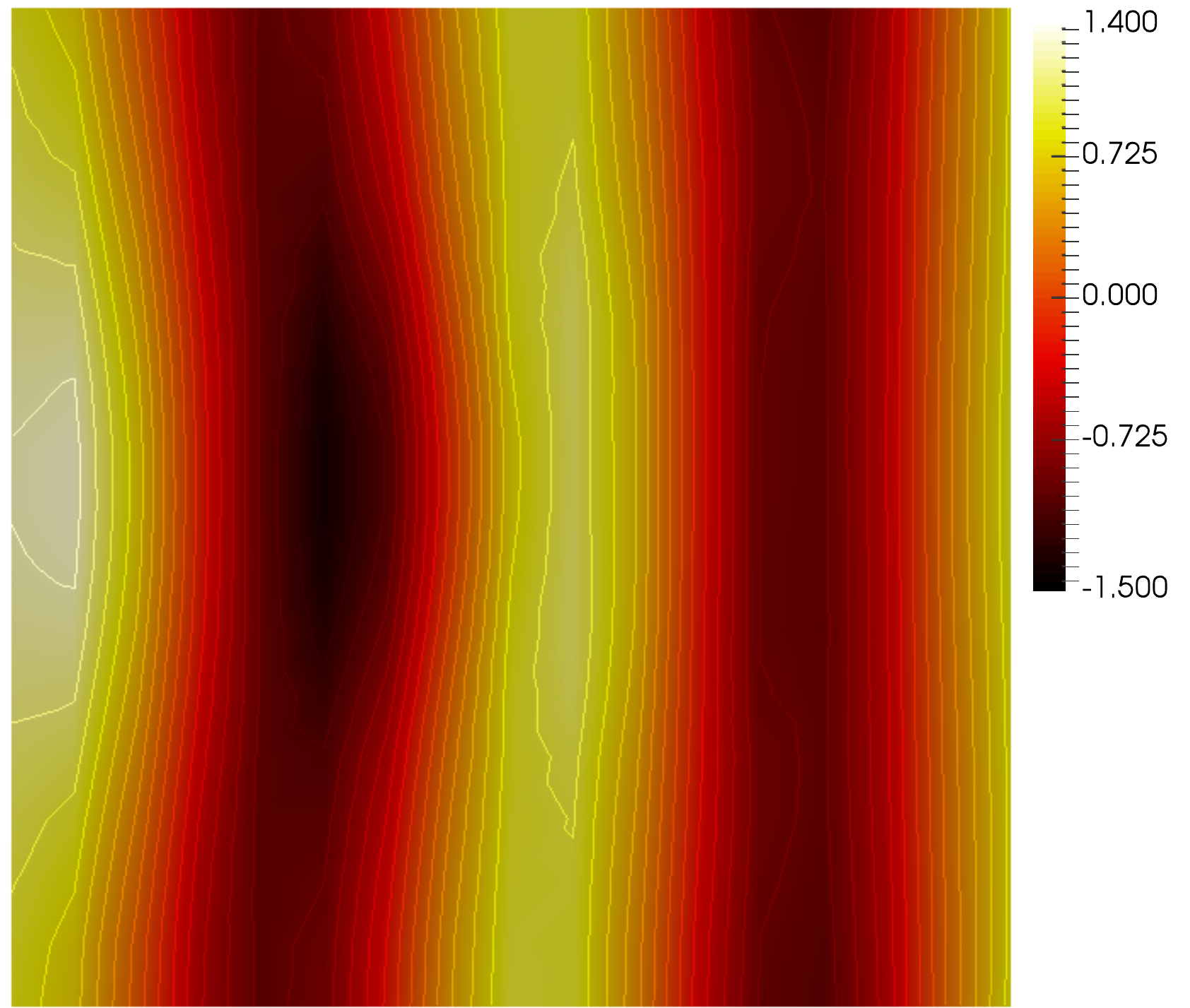}%
\hspace{0.5cm}%
\includegraphics[width=0.48\textwidth, trim= 75mm 33mm 51mm 32mm, clip=true, keepaspectratio=false]{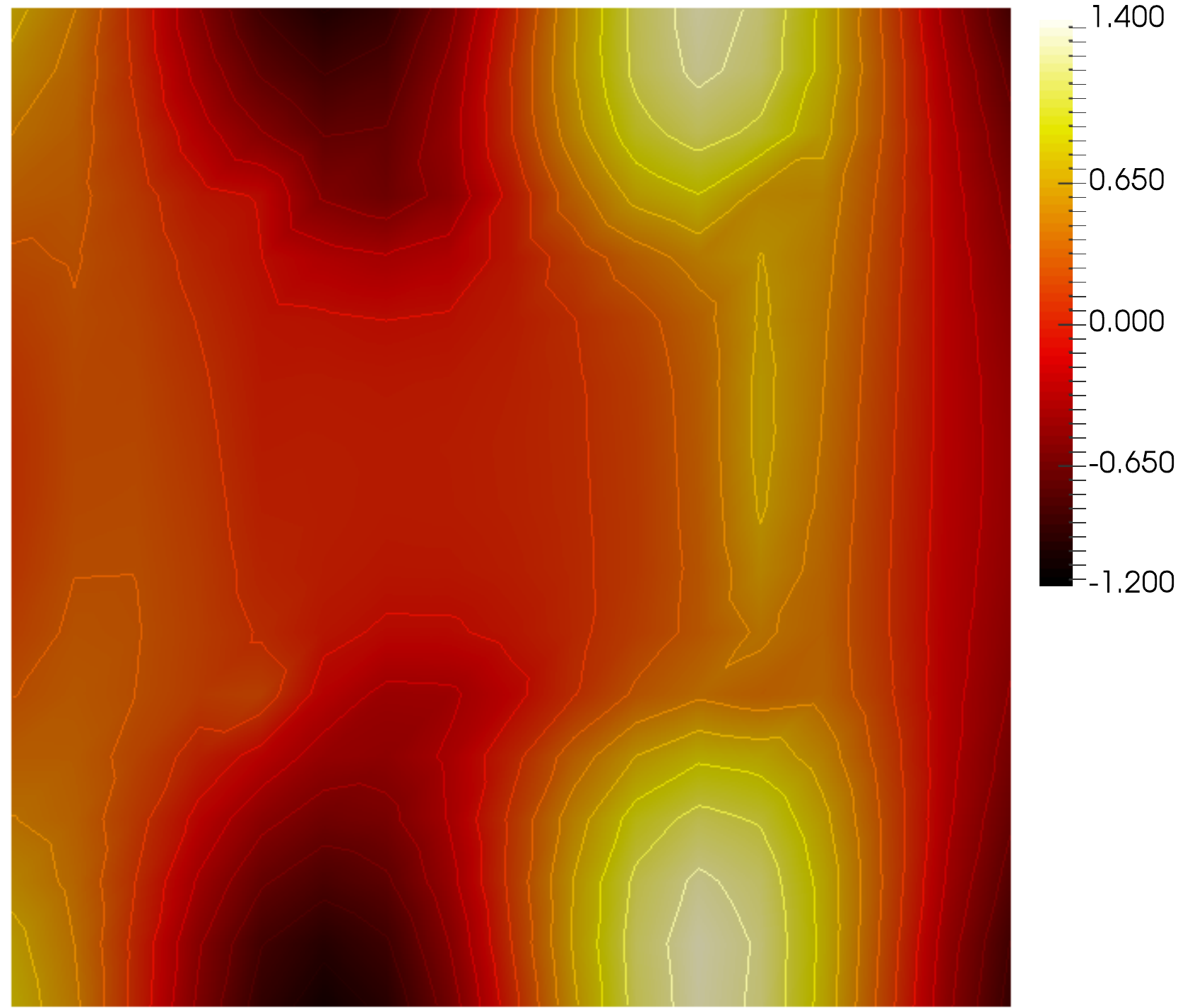}\\
\vspace{1pt}\\
\includegraphics[width=0.48\textwidth, trim= 75mm 33mm 51mm 32mm, clip=true, keepaspectratio=false]{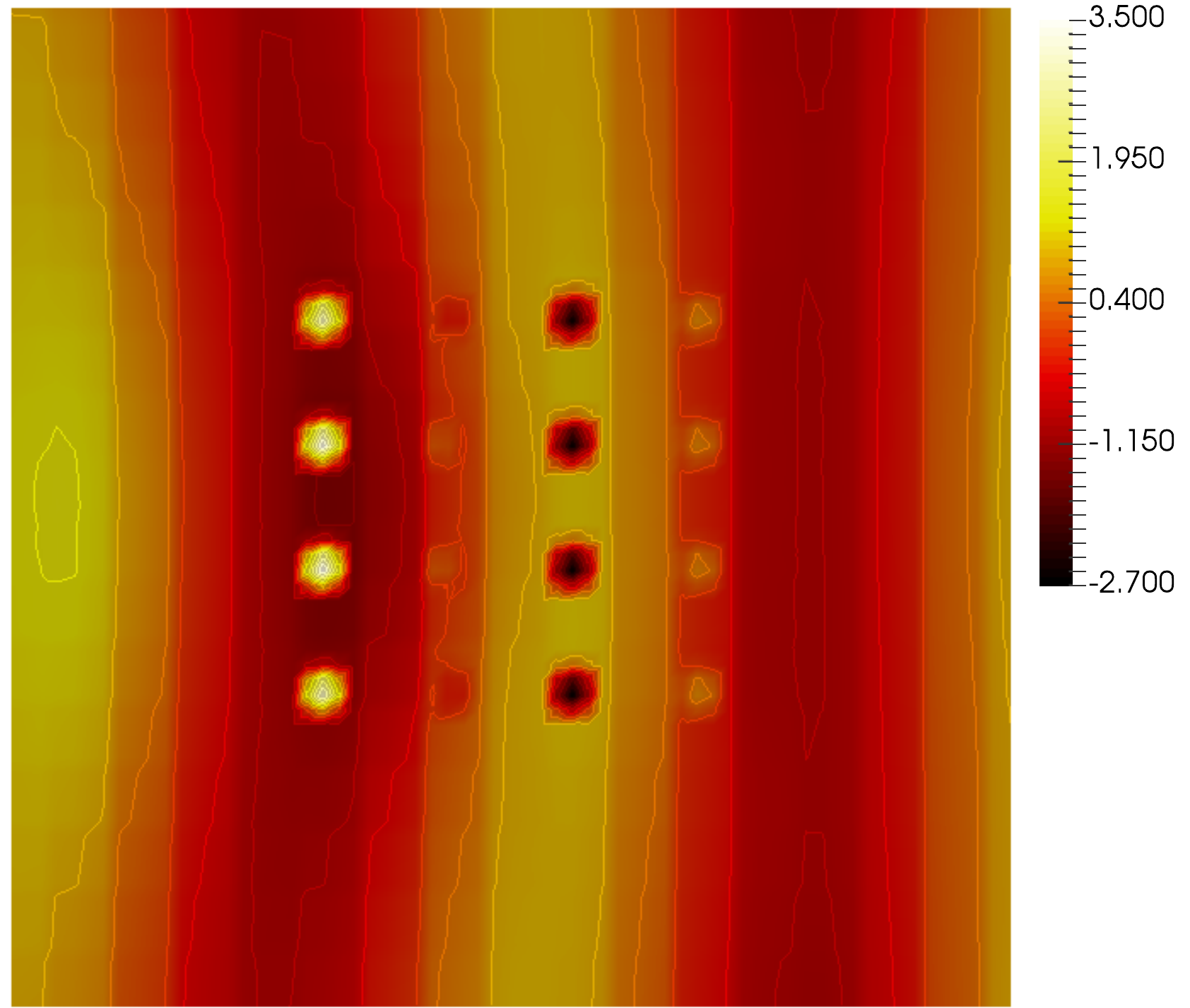}%
\hspace{0.5cm}%
\includegraphics[width=0.48\textwidth, trim= 75mm 33mm 51mm 32mm, clip=true, keepaspectratio=false]{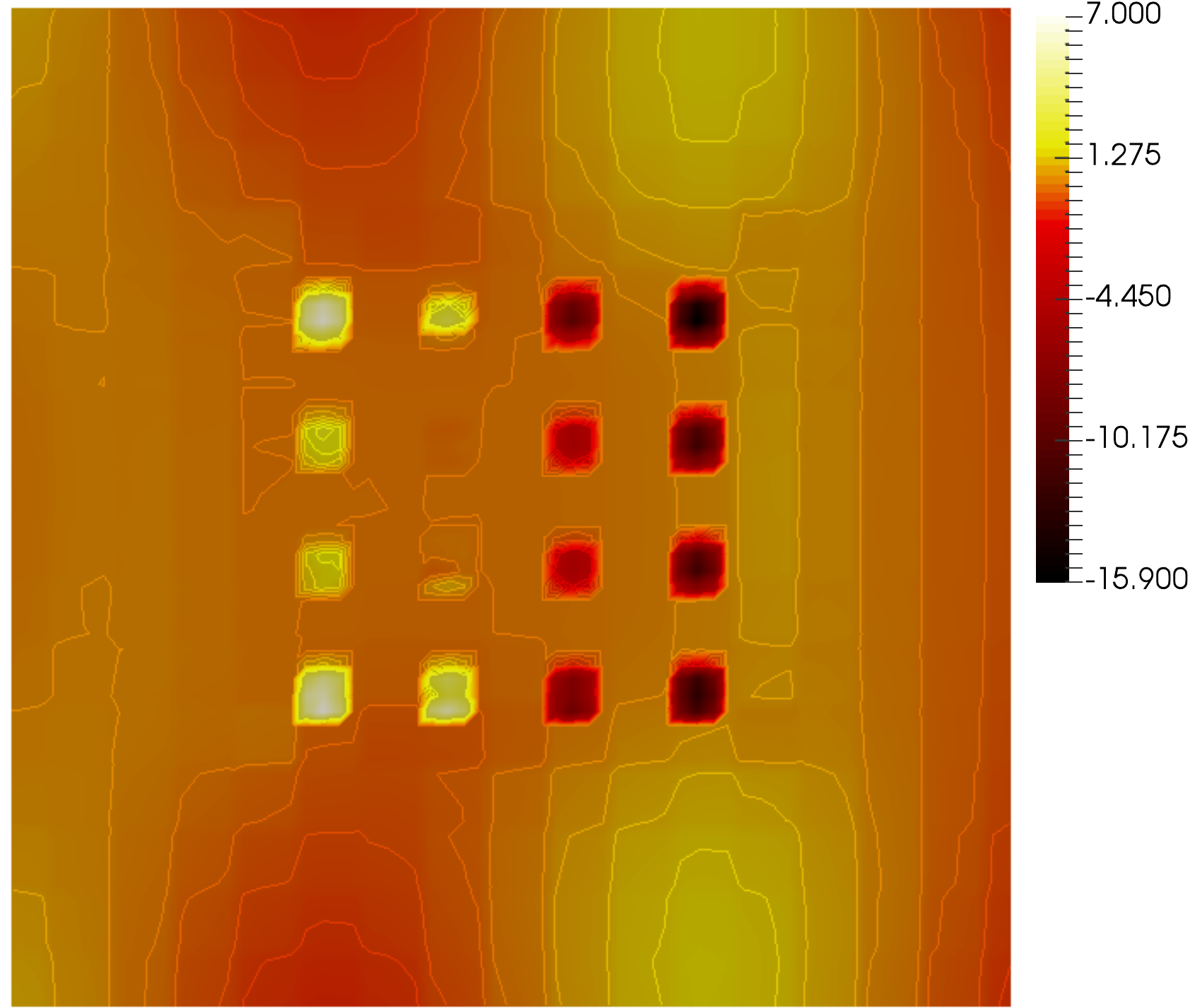}
\caption{In the plane $y=0.545$: $x_2$-component of $\Re(\Vu_H)$ (top row) and of $\Re(\Vu^0_{\mathrm{HMM}})$ (bottom row) for $k=12$ (left column) and $k=9$ (right column).}
\label{fig:2dplots}
\end{figure}

\section*{Conclusion}
We suggested a new Heterogeneous Multiscale Method (HMM) for the Maxwell scattering problem with high contrast. 
A two-scale limit problem is obtained via two-scale convergence, which is equivalent to existing homogenization results in the literature, but has some advantages for analysis and numerics.
The stability and regularity  of the homogenized system is analyzed rigorously and thereby, the first stability result for time-harmonic Maxwell's equations with impedance boundary condition and non-constant coefficients is proved.
The HMM is defined as direct finite element discretization of the two-scale equation, which is crucial for the numerical analysis. 
Well-posedness, quasi-optimality and a priori error estimates in energy and dual norms are shown under an (unavoidable) resolution condition linking the mesh size and the wavenumber and which depends on the polynomial stability. 
Numerical experiments verify the developed convergence results.
The comparison of the HMM-approximation (with the discrete correctors) to a full reference solution of the heterogeneous problem is subject of future research. 

\section*{Acknowledgment}
The author is thankful to Mario Ohlberger for fruitful discussions on the subject and his helpful remarks regarding the manuscript.

\end{document}